\newtheorem{theorem}{Theorem}[section]
\newtheorem{lemma}[theorem]{Lemma}
\newtheorem{proposition}[theorem]{Proposition}
\newtheorem{corollary}[theorem]{Corollary}
\newtheorem{conjecture}[theorem]{Conjecture}
\begin{document}

\title[Cliques in derangement graph]{Cliques in  derangement graphs for innately transitive groups}

\author[M. Fusari]{Marco Fusari}
\address{Marco Fusari, Dipartimento di Matematica ``Felice Casorati", University of Pavia, Via Ferrata 5, 27100 Pavia, Italy}
\email{lucamarcofusari@gmail.com}

\author[A.~Previtali]{Andrea Previtali}
\address{Andrea Previtali, Dipartimento di Matematica e Applicazioni, University of Milano-Bicocca, Via Cozzi 55, 20125 Milano, Italy}
\email{andrea.previtali@unimib.it}

\author[P.~Spiga]{Pablo Spiga}
\address{Pablo Spiga, Dipartimento di Matematica e Applicazioni, University of Milano-Bicocca, Via Cozzi 55, 20125 Milano, Italy}
\email{pablo.spiga@unimib.it}

\begin{abstract}

  Given a permutation group $G$, the derangement graph of $G$ is the Cayley graph with connection set the derangements of $G$. The group $G$ is said to be innately transitive if $G$ has a transitive minimal normal subgroup. Clearly, every primitive group is innately transitive. We show that, besides an infinite family of explicit  exceptions, there exists a function $f:\mathbb{N}\to \mathbb{N}$ such that, if $G$ is innately transitive of degree $n$ and the derangement graph of $G$ has no clique of size $k$, then $n\le f(k)$.

   Motivation for this work arises from investigations on Erd\H{o}s-Ko-Rado type theorems for permutation groups.

\keywords{derangement graph, cliques, independent sets, Erd\H{o}s-Ko-Rado Theorem, primitive groups, multipartite graphs}
\end{abstract}

\subjclass[2010]{Primary 05C35; Secondary 05C69, 20B05}

\maketitle

\section{Introduction}\label{sec:intro}
One of the most beautiful results in extremal combinatorics is the Erd\H{o}s-Ko-Rado theorem~\cite{erdos1961intersection}:
let $n$ and $k$ be positive integers with $1\le 2k<n$ and let $\mathcal{F}$ be a family of $k$-subsets of $\{1,\ldots,n\}$.
If any two elements from $\mathcal{F}$ intersect in at least one point, then $|\mathcal{F}|\le \binom{n-1}{k-1}$. Moreover,
the inequality is attained if and only if there exists $x\in\{1,\ldots,n\}$ such that each element from $\mathcal{F}$ contains $x$.

There are various analogues of the Erd\H{o}s-Ko-Rado theorem for a number of combinatorial structures. In this paper we are interested in the analogue for permutation groups. Let $G$ be a finite permutation group on $\Omega$. A subset $\mathcal{F}$ of $G$ is said to be \textit{\textbf{intersecting}} if, for any two elements $g,h\in \mathcal{F}$, $gh^{-1}$ fixes some point of $\Omega$. This is a very natural definition; indeed, by writing $g$ as the $n$-tuple $(1^g,2^g,\ldots,n^g)$, we see that $gh^{-1}$ fixes some point of $\Omega$ if and only if the $n$-tuples corresponding to $g$ and $h$ agree in at least one coordinate. Therefore, somehow, this mimics the definition of intersecting sets in the original Erd\H{o}s-Ko-Rado theorem.

Observe that, for every $\omega\in \Omega$, the point stabilizer $G_\omega$ is intersecting. More generally, each coset of the stabilizer of a point is an intersecting set. Answering a question of Erd\H{o}s, Cameron-Ku~\cite{6} and Larose-Malvenuto~\cite{10} have independently proved an analogue of the Erd\H{o}s-Ko-Rado theorem when $G=\mathrm{Sym}(\Omega)$.\footnote{An intersecting set of $\mathrm{Sym}(\Omega)$ has cardinality at most $(|\Omega|-1)!$; moreover, the intersecting sets attaining the bound $(|\Omega|-1)!$ are cosets of the stabilizer of a point.}  Unfortunately, in general only rarely $G_\omega$ is an intersecting set of maximal size in $G$\footnote{For instance, if we let the alternating group $\mathrm{Alt}(5)$ acting on the ten $2$-subsets of $\{1,2,3,4,5\}$, we see that $\mathrm{Alt}(4)$ is an intersecting set of size $12$, whereas the point stabilizer in this action has only cardinality $6$.} and hence no analogue of the Erd\H{o}s-Ko-Rado theorem holds for arbitrary permutation groups. Even when $|G_\omega|$ is the maximal cardinality of an intersecting set for $G$, it is far from being true that all intersecting sets attaining the bound $|G_\omega|$ are cosets of the stabilizer of a point.\footnote{For instance, in the projective general linear group $G=\mathrm{PGL}_d(q)$ in its $2$-transitive action on the $(q^d-1)/(q-1)$ points of the projective space $\mathrm{PG}_{d-1}(q)$, the intersecting sets of maximal cardinality are either cosets of the stabilizer of a point or cosets of the stabilizer of a hyperplane, see~\cite{spiga}.} These two difficulties make investigations on intersecting sets of maximal size in arbitrary permutation groups more interesting and challenging.

Let $\omega\in \Omega$ with $G_\omega$ having maximum cardinality among point stabilizers.\footnote{Observe that all point stabilizers have the same cardinality when $G$ is transitive.} The \textit{\textbf{intersection density}} of the intersecting family $\mathcal{F}$ of $G$ is defined by
$$\rho(\mathcal{F})=\frac{|\mathcal{F}|}{|G_\omega|}.$$
The \textit{\textbf{intersection density}} of $G$ is
$$\rho(G)=\max\{\rho(\mathcal{F})\mid \mathcal{F}\subseteq G, \mathcal{F} \hbox{ is intersecting}\}.$$
This invariant was introduced by Li, Song and Pantagi in~\cite{li2020ekr} to measure how ``close'' $G$ is from satisfying the Erd\H{o}s-Ko-Rado theorem.

Let $\mathcal{D}$ be the set of all \textit{\textbf{derangements}} of $G$,
where a derangement is a permutation without fixed points. The derangement graph
of $G$ is the graph $\Gamma_G$ whose vertex set is the set $G$ and whose edge set consists of
all pairs $(h, g) \in G \times G$ such that $gh^{-1} \in \mathcal{D}$. Thus, $\Gamma_G$ is the Cayley graph
of $G$ with connection set $\mathcal{D}$. With this terminology, an intersecting family of $G$ is an \textit{\textbf{independent
set}} or \textit{\textbf{coclique}} of $\Gamma_G$, and vice versa. As customary, we denote by $\omega(\Gamma_G)$ the maximal size of a clique and by $\alpha(\Gamma_G)$ the maximal size of a coclique (a.k.a. independent set).

Now, the clique-coclique bound~\cite[Theorem~2.1.1]{GodsilMeagher}
\begin{equation}\label{clique-coclique}\alpha(\Gamma_G)\omega(\Gamma_G)\le |V\Gamma_G|=|G|
\end{equation}
can be used to extract useful information on the intersection density of $G$. Indeed, from~\eqref{clique-coclique} and from the definition of intersection density, we obtain
\begin{equation}\label{clique-coclique2}\rho(G)\le \frac{|\Omega|}{\omega(\Gamma_G)}.
\end{equation}

When $G$ is transitive and $|\Omega|\ge 2$, Jordan's theorem\footnote{See~\cite{Serre} for a beautiful account of Jordan's theorem and for a number of applications in various areas of mathematics.} ensures that $G$ has a derangement $g$ and hence $\{1,g\}$ is a clique of $\Gamma_G$ of cardinality $2$. Therefore,~\eqref{clique-coclique2} yields $\rho(G)\le |\Omega|/2$.

Theorem~1.5 in~\cite{KRS} shows that, when $G$ is transitive and $|\Omega|\ge 3$, the derangement graph $\Gamma_G$ has a clique of cardinality $3$, that is a triangle, and hence $\rho(G)\le |\Omega|/3$. Despite the fact that Jordan's theorem is elementary, the proof of~\cite[Theorem~1.5]{KRS} is quite involved and ultimately relies on the Classification of the Finite Simple Groups.

In the light of these two results, Question~6.1 in~\cite{KRS} asks for the existence of a function $f:\mathbb{N}\to\mathbb{N}$ such that, if $G$ is transitive of degree $n$ and $\Gamma_G$ has no $k$-clique, then $n\le f(k)$. Indeed, when $k=2$, we have $n\le 1$ by Jordan's theorem and, when $k=3$, we have $n\le 2$ by~\cite[Theorem~1.5]{KRS}. A similar question, formulated in terms of (weak) normal coverings of groups is in~\cite{BSW}.

From~\cite{Bob} and~\cite{Praeger,Praeger1,Praeger2}, we see that there are remarkable applications of normal coverings and Kronecker classes in algebraic number theory. In fact, Question~6.1 in~\cite{KRS} is very much related to conjectures of Neumann and Praeger on coverings of finite groups for applications on Kronecker classes.  We have summarized in Section~\ref{coveringsKronecker} these applications and the connection with our work.

In this paper we make the first substantial progress towards this question. A permutation group $G$ on $\Omega$ is said to be \textit{\textbf{innately transitive}} if $G$ has a minimal normal subgroup $N$ with $N$ transitive on $\Omega$. These permutation groups greatly generalize the class of primitive and quasiprimitive groups. Moreover, innately transitive groups admit a structural result similar to the  O'Nan-Scott   theorem for primitive and quasiprimitive groups~\cite{BP}; furthermore, they play a substantial role in a number of questions in finite permutation groups, see for instance~\cite{Giudici}.
\begin{theorem}\label{thrm:main}
There exists a function $f_1:\mathbb{N}\to\mathbb{N}$ such that, if $G$ is innately transitive of degree $n$ and the derangement graph of $G$ has no clique of size $k$, then $n\le f_1(k)$.
\end{theorem}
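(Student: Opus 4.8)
The plan is to recast the statement group-theoretically and then run the Bamberg--Praeger structure theorem for innately transitive groups, reducing everything to the almost simple case. First, a clique of $\Gamma_G$ is exactly a subset $C\subseteq G$ whose elements pairwise disagree at every point, i.e.\ $gh^{-1}\in\mathcal D$ for all distinct $g,h\in C$; since $\mathcal D$ is invariant under left multiplication, one may assume $1\in C$. Two elementary mechanisms produce large cliques. If $G$ contains a \emph{regular} subgroup $R$, then every nonidentity element of $R$ lies in $\mathcal D$, so $R$ is a clique and $\omega(\Gamma_G)=n$. If $g\in G$ generates a \emph{semiregular} cyclic subgroup of order $r$ (all $\langle g\rangle$-orbits of length $r$), then $g^i(g^j)^{-1}=g^{\,i-j}\in\mathcal D$ whenever $i\neq j$, so $\{1,g,\dots,g^{r-1}\}$ is a clique and $\omega(\Gamma_G)\ge r$. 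Since ``$\Gamma_G$ has no $k$-clique'' means $\omega(\Gamma_G)<k$, proving the theorem is equivalent to showing that for each $k$ only finitely many degrees $n$ arise from innately transitive groups with $\omega(\Gamma_G)<k$; so I must prove $\omega(\Gamma_G)\to\infty$ as $n\to\infty$, uniformly over innately transitive $G$. Writing $N=\mathrm{soc}(G)\cong T^m$ for the plinth ($T$ simple) and fixing $\omega\in\Omega$, I would split according to the Bamberg--Praeger type of $(N,N_\omega)$.

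Next I would dispose of every type in which $N$ already contains a regular subgroup. When $N$ is abelian it is a transitive abelian group, hence regular, and $\omega(\Gamma_G)=n$. When $N$ is nonabelian and either acts regularly or is of holomorph type, $N$ contains a regular subgroup and the same holds. In the simple diagonal case $N=T^m$ acts on $T^{m-1}$ with $N_\omega$ the diagonal $D=\{(t,\dots,t):t\in T\}$, and the subgroup $R=\{(t_1,\dots,t_{m-1},1):t_i\in T\}$ satisfies $R\cap D=1$ and $|R|=|N:D|=n$, so $R$ is a regular subgroup of $N\le G$ and again $\omega(\Gamma_G)=n$. On all of these types the conclusion holds with, say, $f_1(k)=k-1$.

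For the product action type, say $G\le H\wr\Sym(m)$ acting on $\Delta^m$ with $n=|\Delta|^m$ and $N=\mathrm{soc}(H)^m$ inside the base group, I would use that a coordinatewise element $(a_1,\dots,a_m)$ is a derangement on $\Delta^m$ if and only if some $a_t$ is a derangement on $\Delta$. Thus, from a clique $\{h_1,\dots,h_c\}$ of the derangement graph of $\mathrm{soc}(H)$ on $\Delta$, the tuples $(h_{i_1},\dots,h_{i_m})$ with $(i_1,\dots,i_m)\in\{1,\dots,c\}^m$ form a clique of $\Gamma_G$ of size $c^m$; taking $c\ge 2$ (Jordan) already gives $\omega(\Gamma_G)\ge 2^m$, while using the first coordinate alone gives $\omega(\Gamma_G)\ge c$. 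As $n=|\Delta|^m$ forces $m\to\infty$ or $|\Delta|\to\infty$, this reduces the product action type, and (through its simple diagonal components, which are already settled) the compound diagonal type, to controlling the clique number of the almost simple component $H$ on $\Delta$ as $|\Delta|\to\infty$.

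Everything therefore reduces to the almost simple case: $T\le G\le\Aut(T)$ with $T$ simple acting faithfully and transitively on $\Omega$, where I expect the main difficulty to lie. Here I would invoke the Classification of Finite Simple Groups together with mechanism (ii): one must show that every transitive action of large degree of an almost simple group admits a semiregular cyclic subgroup of unbounded order. For alternating groups this comes from suitably long (products of) cycles acting freely on subsets, partitions, and similar modules; for classical groups from Singer cycles and regular semisimple elements in large maximal tori, which act semiregularly on most coset spaces with order growing in $q$ and the rank; for exceptional groups from regular semisimple elements of large order; and the sporadic groups contribute only finitely many degrees, absorbed into $f_1$. The crux, demanding the bulk of the technical work, is to make this uniform across all families and all (not necessarily primitive) transitive actions, and to isolate the explicit infinite family of actions for which no large semiregular cyclic subgroup exists; for those one must instead build large cliques directly --- from non-cyclic semiregular subgroups or ad hoc combinatorial configurations --- and verify that these too grow with $n$. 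This CFSG-dependent analysis is precisely the hard part, mirroring the fact that even producing a single triangle in~\cite{KRS} already required the classification.
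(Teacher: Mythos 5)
Your overall strategy coincides with the paper's: you observe that (semi)regular subgroups are cliques, you dispose of the types whose plinth contains a regular subgroup (this is the paper's Lemma~\ref{basic}), and your product-action construction --- tuples built from a clique in one coordinate, giving a clique of size $c^m$ --- is exactly the device the paper uses to handle the elusive family $M_{11}\mathrm{wr}A$ in Section~\ref{deduceeasystuff} and to reduce PA type to AS type in Proposition~\ref{prop:reduction2}. The only cosmetic difference is that you invoke the Bamberg--Praeger types directly on the innately transitive group, whereas the paper first passes to a primitive quotient via a maximal system of imprimitivity and a lifting lemma for semiregular subgroups (Lemma~\ref{lemma:prel} and Proposition~\ref{prop:reduction}), and only then applies O'Nan--Scott; either route can be made to work.

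The genuine gap is that the almost simple case --- which you yourself identify as ``the crux, demanding the bulk of the technical work'' --- is left entirely as a programme, and your sketch of it underestimates what is actually required, so the proposal does not constitute a proof. Two concrete points. First, for $G=\mathrm{Alt}(m)$ acting on $\ell$-subsets, the natural semiregular element is a product of $p$-cycles for a prime $p>\ell$ with $m\bmod p<\ell$; when $\ell$ is small (already $\ell=2$) it can happen that every prime divisor of $m(m-1)$ is smaller than $k$, and ruling out infinitely many such $m$ is a nontrivial number-theoretic statement. The paper has to invoke the effective weak $abc$ theorem of Stewart--Tijdeman (Theorem~\ref{abctheorem}) to bound $m$ in that situation; ``suitably long cycles acting freely on subsets'' does not get you there. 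Second, for groups of Lie type, a primitive prime divisor of $q^n-1$ is only guaranteed to be at least $n+1$, so Singer cycles and large tori do not by themselves produce semiregular subgroups of order growing with $q$ at bounded rank; the paper needs Siegel's theorem on the greatest prime factor of $\Phi_n(q)$ (Lemma~\ref{siegeltheorem}) to make the order unbounded, combined with the Liebeck--Praeger--Saxl tables to pin down the exceptional stabilizers whose order is divisible by all the relevant primitive prime divisors (e.g.\ $\mathrm{SO}_{2m}^-(q)<\mathrm{Sp}_{2m}(q)$), each of which then requires a bespoke semiregular subgroup built from the geometry (Lemmas~\ref{tableline1}--\ref{tableline4}). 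Until these analyses are carried out, the claimed function $f_1$ does not exist on the strength of your argument.
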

In particular, we answer~\cite[Question~6.1]{KRS} when the permutation group $G$ is innately transitive. We make no particular effort in optimizing the function $f$ in Theorem~\ref{thrm:main}, except when $k= 4$.\footnote{We make a special effort in characterizing the innately transitive groups $G$ such that $\Gamma_G$ has no clique of size  $4$, because in the future we intend to use this result to classify arbitrary transitive groups $G$ with $\Gamma_G$ having no clique of size $4$.}
\begin{theorem}\label{thrm:main1}
If $G$ is innately transitive of degree $n$ and the derangement graph of $G$ has no clique of size $4$, then $n\le 3$.
\end{theorem}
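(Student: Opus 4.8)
The plan is to reduce to a transitive minimal normal subgroup of $G$ and then to exhibit an explicit $4$-clique there. The key elementary observation is that a clique of $\Gamma_N$ is automatically a clique of $\Gamma_G$ whenever $N\le G$: a fixed-point-free element of $N$ is fixed-point-free as an element of $G$, so a subset of $N$ whose pairwise quotients are derangements of $N$ has the same property in $G$. Hence $\omega(\Gamma_N)\le\omega(\Gamma_G)$, and it suffices to find a $4$-clique in $\Gamma_N$ for a transitive minimal normal subgroup $N$. Recall also that a $4$-clique is a set of four permutations that pairwise disagree in every coordinate, equivalently a $4\times n$ Latin rectangle with rows in $N$; this already forces $n\ge 4$, so the asserted bound $n\le 3$ is vacuous when $n\le 3$, and the entire content of the statement is the claim that every transitive minimal normal subgroup of degree $n\ge 4$ carries a $4$-clique.

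First I would dispose of an abelian plinth. A transitive abelian group is regular, so every non-trivial element of $N$ is a derangement; thus $\Gamma_N=K_n$ and $\omega(\Gamma_N)=n\ge 4$. This settles the affine type once and for all, independently of any finer analysis.

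The substantial case is a non-abelian plinth $N\cong T^m$ with $T$ non-abelian simple, where automatically $n\ge 5$ and a $4$-clique must genuinely be produced. My principal device is the remark that if $N$ contains an element $x$ all of whose cycles on $\Omega$ have length at least $4$, then $x,x^2,x^3$ are all derangements: a cycle of length $\ell$ contributes a fixed point to $x^k$ precisely when $\ell\mid k$, so no cycle of length $\ge4$ contributes a fixed point for $k\in\{1,2,3\}$. The pairwise quotients of $1,x,x^2,x^3$ are then $x^{\pm1},x^{\pm2},x^{\pm3}$, all derangements, and $\{1,x,x^2,x^3\}$ is a $4$-clique. In particular a derangement of prime order $p\ge5$, or any semiregular element of order at least $4$, produces a $4$-clique at once. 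Using the Bamberg--Praeger structure theorem for innately transitive groups together with the Classification of Finite Simple Groups, I would establish the existence of such an element for the vast majority of actions of $T^m$: in the product-action and diagonal types one lifts a suitable element from a single simple factor, while for the almost simple type one invokes results on derangements of prime order $\ge5$ in primitive actions, which supply such an element outside an explicit list of small exceptions.

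The step I expect to be the main obstacle is precisely the class of actions in which \emph{every} element has a cycle of length at most $3$, so that no clique of the form $\{1,x,x^2,x^3\}$ exists and a non-cyclic $4$-clique must be built by hand. A representative instance is $A_5\cong\PSL_2(5)$ acting on the six points of the projective line: here the only derangements are the elements of order $3$ (of cycle type $3^2$), yet the three order-$3$ elements $(1\,2\,3),\,(1\,2\,4),\,(1\,2\,5)$ have all pairwise quotients of order $3$, so together with the identity they form a $4$-clique. I would therefore isolate, via the classification of primitive and quasiprimitive groups, the actions admitting no element of minimal cycle length $\ge4$; these comprise an explicitly describable family (matching the exceptional family flagged in the abstract) together with finitely many sporadic small cases, and for each I would certify a $4$-clique by a uniform non-cyclic configuration in the spirit of the $A_5$ example or by a short direct computation. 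Controlling this family and verifying that none of its members evades a $4$-clique is the delicate point; for every remaining action the cyclic construction applies uniformly.
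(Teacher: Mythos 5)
Your reductions at the start are correct and your clique-producing device is sound: a clique of $\Gamma_N$ is a clique of $\Gamma_G$, a transitive abelian plinth is regular and gives $\Gamma_N=K_n$, an element $x$ all of whose cycles have length at least $4$ yields the $4$-clique $\{1,x,x^2,x^3\}$, and your triple of $3$-cycles does certify a $4$-clique for $\mathrm{Alt}(5)$ in degree $6$. The strategy also parallels the paper's, which deduces this theorem from the statement that an innately transitive group with no semiregular subgroup of order at least $4$ has degree at most $3$ or lies in a short explicit list, and then checks the list by computer. But there is a genuine gap: everything from ``Using the Bamberg--Praeger structure theorem together with the Classification, I would establish the existence of such an element'' onward is a declaration of intent rather than an argument, and that is exactly where the entire content of the theorem lives. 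Producing the required elements for ``the vast majority'' of actions and determining the exceptional family is what occupies Sections~3--6 of the paper: a reduction from innately transitive to primitive and then to primitive \emph{simple} groups (including a delicate separate treatment of $M_{11}\mathrm{wr}A$ in Lemma~3.2); Sylvester's and Zsigmondy's theorems, a weak form of the $abc$ conjecture, and Siegel's theorem on the greatest prime factor of $\Phi_n(q)$ to bound the degrees that can occur; the Liebeck--Praeger--Saxl tables to control proper subgroups of Lie-type groups whose order is divisible by prescribed primes; and a substantial number of machine verifications. None of this can be summarized as ``one invokes results on derangements of prime order at least $5$'': the actions that survive such an invocation are precisely the hard ones, and identifying them \emph{is} the theorem. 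As written, the proposal proves the statement only for regular plinths and for one illustrative degree-$6$ action.

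Two further cautions. First, the reduction to the plinth $N\cong T^m$ buys less than you suggest: the action of $N$ on $\Omega$ need be neither primitive nor innately transitive (in product action the simple direct factors are intransitive), so one cannot directly quote classification results about primitive actions of simple groups for $N$; the paper instead keeps $G$ in play and descends through maximal block systems (Lemma~3.1, Propositions~3.4 and~3.6), and your plan would need an analogous descent. Second, your proposed dichotomy (cyclic $4$-cliques versus hand-built ones) must in particular absorb the elusive groups $M_{11}\mathrm{wr}A$ of degree $12^{\kappa}$, which have no nonidentity semiregular subgroup at all; your weaker criterion of ``all cycles of length at least $4$'' does happen to rescue $M_{11}$ in degree $12$ via an element of cycle type $8+4$, but nothing in the proposal establishes that the family of actions requiring such ad hoc rescues is finite or explicitly describable, which is the delicate point you yourself flag and then leave open.
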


A permutation group $X$ on $\Omega$ is said to be \textbf{\textit{semiregular}} if no non-identity element of $X$ fixes some point of $\Omega$, that is, $X_\omega=1$ $\forall\omega\in \Omega$. Observe that a semiregular subgroup $X$ forms a clique in the derangement graph of $\mathrm{Sym}(\Omega)$.\footnote{Indeed, the matrix having rows indexed by the elements of $X$, columns indexed by the elements of $\Omega$ and having $i^g$ in row $g$ and column $i$ is a partial Latin square and hence a clique in the derangement graph.}  In fact, Theorems~\ref{thrm:main} and~\ref{thrm:main1} both follow from a more general result concerning semiregular subgroups in innately transitive permutation groups.

\begin{theorem}\label{thrm:main2}
There exists a function $f_2:\mathbb{N}\to\mathbb{N}$ such that, if $G$ is innately transitive of degree $n$ and $G$ has no semiregular subgroup of order at least $k$, then either $n\le f_2(k)$, or $G$ is primitive of degree $12^\kappa$ and $G=M_{11}\mathrm{wr}A$, for some positive integer $\kappa$ and for some transitive subgroup $A$ of $\mathrm{Sym}(\kappa)$, where $M_{11}$ is the Mathieu group.

Moreover, if $G$ has no semiregular subgroup of order at least $4$, then one of the following holds
\begin{enumerate}
\item\label{thrmcase1} $n\le 3$,
\item\label{thrmcase2} $n=6$, $G$ is primitive and $G\cong\mathrm{Alt}(5)$,
\item\label{thrmcase3} $n=6$, $G$ is primitive and  $G\cong\mathrm{Alt}(6)$,
\item\label{thrmcase4} $n=36$, $G$ is primitive and $\mathrm{PSU}_3(3)\le G\le \mathrm{P}\Gamma\mathrm{U}_3(3)$,
\item\label{thrmcase5} $n=12^\kappa$, $G$ is primitive and $G=M_{11}\mathrm{wr}A$, for some positive integer $\kappa$ and for some transitive subgroup $A$ of $\mathrm{Sym}(\kappa)$.
\end{enumerate}
\end{theorem}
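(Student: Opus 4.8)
The plan is to exploit the structural description of innately transitive groups in~\cite{BP} and to reduce everything to an analysis of the transitive minimal normal subgroup $N$, the \emph{plinth}. Recall first that a nontrivial subgroup is semiregular precisely when all of its non-identity elements are derangements, and that every nontrivial subgroup contains an element of prime order; hence a permutation group has a nontrivial semiregular subgroup if and only if it has a fixed-point-free element of prime order. Write $N\cong T^d$ with $T$ simple. If $T$ is abelian then $N$ is transitive and abelian, hence regular, so $N$ itself is a semiregular subgroup of order $n$, and the hypothesis immediately forces $n<k$. We may therefore assume that $T$ is nonabelian simple and run through the innately transitive analogue of the O'Nan--Scott theorem.

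The first main step is to dispose of all types but two. In the holomorph and twisted wreath types there is a regular subgroup (the plinth is regular in twisted wreath type, while a simple direct factor acts by translation in the holomorph types); and in simple diagonal type the subgroup $T_1\times\cdots\times T_{d-1}$, the product of all but one of the simple direct factors, meets every conjugate of the diagonal point stabiliser trivially and is therefore regular, of order $n$, with compound diagonal type being analogous. In each of these cases a semiregular subgroup of order $n$ exists outright, so the hypothesis forces $n<k$. This leaves the two substantial types: product action type and almost simple type. In product action type one has $\Omega=\Delta^d$ with $d\ge 2$ and $N=T^d$ acting coordinatewise through a primitive component action of $T$ on $\Delta$, and the key point is multiplicative: if $S\le T$ is semiregular on $\Delta$, then $S^d\le N$ is semiregular on $\Delta^d$ of order $|S|^d$ (and already $S$ placed in a single coordinate is semiregular of order $|S|$). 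Thus a non-elusive component yields a semiregular subgroup of order at least $p^d$, which is large when $d$ is large; and when $d$ is bounded, largeness of $n=|\Delta|^d$ forces $|\Delta|$ large, reducing to the almost simple component. The only surviving possibility is that the component action of $T$ on $\Delta$ is elusive; by the classification of elusive quasiprimitive groups (Giudici) the sole relevant nonabelian simple component is $M_{11}$ in its degree-$12$ action on the cosets of $\PSL_2(11)$, and the resulting groups are exactly $M_{11}\wr A$ in product action of degree $12^\kappa$. These groups are themselves elusive and so possess no nontrivial semiregular subgroup at all, which is precisely the stated exceptional family.

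The crux, and the step I expect to be the main obstacle, is the quantitative statement in almost simple type ($d=1$): for an almost simple primitive group of degree $n$ with socle $T$ and point stabiliser $R$, the maximal order of a semiregular subgroup tends to infinity with $n$, with only finitely many exceptions below any fixed bound. A clean mechanism is furnished by Sylow subgroups, since if some prime $p$ divides $|T|$ but not $|R|$ then a Sylow $p$-subgroup of $T$ meets every conjugate of $R$ trivially and is semiregular of order $|T|_p$; the real work is to prove, uniformly across the families of simple groups and relying on the Classification, that for large $n$ such a prime with $|T|_p\ge k$ can always be found, or else a large semiregular subgroup built directly, leaving only a bounded list of exceptions. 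Granting this, the function $f_2$ exists and $M_{11}\wr A$ is the only unbounded exception.

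For the refined statement with $k=4$, the multiplicative argument shows that in product action type (where $d\ge 2$) a non-elusive component already produces a semiregular subgroup of order at least $2^2=4$; hence the component must be elusive and we land in case~\ref{thrmcase5}. The non-product, non-almost-simple types carry a regular subgroup of order $n$ and so force $n\le 3$, namely case~\ref{thrmcase1}. In almost simple type the quantitative bound confines $n$ to a finite range, after which a direct inspection of the primitive almost simple groups admitting no semiregular subgroup of order $4$ isolates exactly $\Alt(5)$ and $\Alt(6)$ of degree~$6$, giving cases~\ref{thrmcase2} and~\ref{thrmcase3}, together with the groups satisfying $\PSU_3(3)\le G\le \mathrm{P}\Gamma\mathrm{U}_3(3)$ of degree~$36$, giving case~\ref{thrmcase4}.
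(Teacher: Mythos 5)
Your overall architecture does track the paper's: dispose of the abelian, holomorph, twisted wreath and (compound) diagonal configurations via a regular subgroup of order $n$; exploit multiplicativity of semiregularity in product action; and identify the exceptional family through elusiveness of $M_{11}$ on $12$ points. But the proposal has a genuine gap exactly where you flag it: the quantitative claim for almost simple groups is \emph{asserted}, not proven. "The real work is to prove, uniformly across the families of simple groups\dots\ Granting this\dots" is not an argument, and this step is the bulk of the paper (Sections~4--6). Concretely, for $T=\Alt(m)$ acting on $\ell$-subsets the Sylow mechanism you describe genuinely fails when every prime divisor of $m(m-1)\cdots(m-\ell+1)$ is small; the paper needs Sylvester's theorem and, for $\ell\ge 2$, an effective weak form of the $abc$ conjecture (Stewart--Tijdeman) applied to $m=(m-1)+1$ to bound $m$. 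For Lie type groups it needs the Liebeck--Praeger--Saxl tables to pin down the subgroups $M<T$ with $\pi(M)\supseteq\Pi$, Siegel's theorem on $P[\Phi_n(q)]$ to make the resulting primitive prime divisors grow with $|T|$, and bespoke constructions of semiregular cyclic or unipotent subgroups for the four families in Table~10.7 of that paper (e.g.\ $\Sp_{2m}(q)$ on cosets of $\mathrm{SO}_{2m}^-(q)$), where \emph{no} prime is missing from the stabiliser and the Sylow mechanism gives nothing. None of this, nor any substitute for it, appears in your proposal.

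A second, smaller gap is the reduction from innately transitive to primitive. Invoking the Bamberg--Praeger types does not by itself make the relevant actions primitive: in the almost simple and product action configurations the point stabiliser $T_\omega$ need not be maximal, so your appeal to "an almost simple \emph{primitive} group" is unjustified as stated. The paper handles this by passing to a maximal block system $\Sigma$, observing that the kernel $K$ of $G\to\Sym(\Sigma)$ is semiregular (it is centralised by the transitive plinth), and pulling semiregular subgroups back through $\pi^{-1}$ (Lemma~\ref{lemma:prel}). Relatedly, you must rule out that the $M_{11}\wr A$ exception propagates: an imprimitive innately transitive $G$ whose primitive quotient is $M_{11}\wr A$ could a priori also lack large semiregular subgroups. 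The paper's Lemma~\ref{lemma:technical} is a delicate case analysis (running O'Nan--Scott on $\PSL_2(11)\wr A$ acting inside a block, and a separate argument when the kernel is nontrivial) showing such $G$ always has semiregular subgroups of order at least $11^\kappa$; your proposal does not address this at all.
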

It was discovered by Giudici~\cite{Giudici} that the Mathieu group $M_{11}$ in its primitive action on $12$ points has no non-identity semiregular elements. Permutation groups having this property are called \textit{\textbf{elusive}} and they are of paramount importance for investigations on the Polycirculant conjecture, see~\cite{Giudici} for details. More generally, Giudici has proved that, for every positive integer $\kappa$ and for every transitive subgroup $A$ of $\mathrm{Sym}(\kappa)$, the group $G= M_{11}\mathrm{wr}A$ endowed with the primitive product action on $12^\kappa$ points is elusive. Therefore, this is a genuine exception in Theorem~\ref{thrm:main2}.

\subsection{Normal coverings and Kronecker classes}\label{coveringsKronecker}
There are some remarkable connections between normal coverings and algebraic number fields, see for instance~\cite{Jehne,Klingen1,Klingen2,Praeger}.

Given an algebraic number field $k$ and a finite extension field $K$ of $k$ the \textit{\textbf{Kronecker set}} of $K$ over $k$ is defined as the set of all prime ideals of the ring of integers of $k$ having a prime divisor of relative degree one in $K$. Then, two finite extensions of $k$ are said to be \textit{\textbf{Kronecker equivalent}} if their Kronecker sets have finite symmetric difference, that is, the Kronecker sets differ only in at most a finite number of primes. This defines an equivalence relation and such extensions are said to belong to the same \textit{\textbf{Kronecker class}}. Clearly, extensions in the same Kronecker class have strong arithmetical similarities.

The connection between problems about Kronecker classes in field extensions and group theoretic problems is explained in~\cite{Jehne,Klingen1,Praeger}. Let $K$ and $K'$ be finite extensions of a given fixed algebraic number field $k$ and let $M$ be a Galois extension of $k$ containing $K$ and $K'$. Let $G=\mathrm{Gal}(M/k)$, $U=\mathrm{Gal}(M/K)$ and $U'=\mathrm{Gal}(M/K')$, in particular, $U$ and $U'$ are the subgroups of $G$ corresponding to $K$ and $K'$ via the Galois correspondence. It is shown in~\cite{Jehne,Klingen1} that $K$ and $K'$ are Kronecker equivalent if and only if
\begin{equation}\label{perlis}
\bigcup_{g\in G}U^g=\bigcup_{g\in G}U'^g.
\end{equation}

This already gives a very strong connection between the problem of understanding Kronecker classes and natural questions in finite permutation groups. For instance, if we consider the permutation representations of $G$ on the right cosets of $U$ and on the right cosets of $U'$, then~\eqref{perlis} is equivalent to the fact  that in these two permutation representations of $G$ the set of derangements is the same.

There is one special case where~\eqref{perlis} yields a natural connection with (weak) normal coverings. Indeed, the special case where $K'/k$ is a Galois extension and $K$ is an extension of $K'$ corresponds to $U\le U'\unlhd G$. In particular, in this special case, $K/k$ and $K'/k$ are Kronecker equivalent if and only if
$$U'=\bigcup_{g\in G}U^g.$$
Using the terminology in~\cite{Praeger}, this yields that $U'$ is a $G$-covering of $U$. As $G$ acts by conjugation as a group of automorphisms on $U'$, when $U'\ne U$, we deduce that $\{U\}$ is a weak normal $1$-covering of $U'$.

There is a number of problems arising in Kronecker classes in algebraic number fields that have been addressed using finite group theory. We report here some open conjectures.
\begin{conjecture}[{{Neumann, Praeger, see~\cite{Praeger2}}}] \label{conjecturePraeger0}
There is an integer function $f$ such that, if $G$ is a finite group with subgroups $U$, $U'$ such that $|G:U'|=n$ and $$\bigcup_{g\in G}U^g=\bigcup_{g\in G}U'^g,$$
then $|G:U|\le f(n)$.
\end{conjecture}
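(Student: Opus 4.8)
The plan is to translate the group-theoretic hypothesis into a statement about derangement graphs and then feed it into Theorem~\ref{thrm:main}. Consider the two transitive actions of $G$: on the set $\Omega$ of right cosets of $U$, and on the set $\Omega'$ of right cosets of $U'$. An element $x\in G$ has no fixed point on $\Omega$ precisely when $x\notin\bigcup_{g\in G}U^g$, and likewise for $\Omega'$; hence the equality $\bigcup_{g\in G}U^g=\bigcup_{g\in G}U'^g$ is equivalent to the assertion that the sets of derangements of the two actions coincide. Since each derangement graph is the Cayley graph on the common vertex set $G$ with connection set the corresponding derangements, the two derangement graphs are literally the same graph; in particular they share the same clique number, which I denote by $\omega$.

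Next I would bound $\omega$ using the \emph{smaller} action. In any transitive action of degree $d$, fix a base point $\alpha$; the map $g\mapsto \alpha^g$ is injective on every clique of the derangement graph, for if $\alpha^g=\alpha^h$ with $g\neq h$ lying in a clique then $gh^{-1}$ fixes $\alpha^h$, contradicting that $gh^{-1}$ is a derangement. Hence the clique number is at most $d$. Applying this to $\Omega'$ gives $\omega\le |G:U'|=n$, and transferring back to $\Omega$ shows that the action of $G$ on $\Omega$ has no clique of size $n+1$. Passing to the faithful quotient $\bar G=G/\mathrm{core}_G(U)$ (which changes neither the degree nor the clique number: the kernel consists of non-derangements, so the derangement graph on $\Omega$ is merely the blow-up of that of $\bar G$ in which each vertex is replaced by a fibre carrying no internal edges), and \emph{assuming $\bar G$ is innately transitive}, Theorem~\ref{thrm:main} applied with $k=n+1$ yields $|G:U|=|\Omega|\le f_1(n+1)$. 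Thus $f(n):=f_1(n+1)$ would do the job.

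The main obstacle is exactly the innate transitivity of the action on $\Omega$: the hypotheses of the conjecture only force $\bar G$ to be transitive, and in the Galois-theoretic setting $G=\mathrm{Gal}(M/k)$ there is no a priori reason for the coset action on $U$ to possess a transitive minimal normal subgroup. Consequently the argument above settles Conjecture~\ref{conjecturePraeger0} precisely for those $G$ whose action on the cosets of $U$ is innately transitive, while the general case is equivalent to answering~\cite[Question~6.1]{KRS} for arbitrary transitive groups, which remains open. I would therefore regard extending Theorem~\ref{thrm:main} from innately transitive to all transitive groups as the crux of the matter. I note, by contrast, that no exceptional families intervene in this reduction: the elusive groups $M_{11}\mathrm{wr}A$ are genuine exceptions only for the semiregular-subgroup formulation in Theorem~\ref{thrm:main2}, and not for the clique formulation in Theorem~\ref{thrm:main}, on which the present strategy relies.
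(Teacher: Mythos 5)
Your proposal matches the paper's own treatment of this statement: the paper likewise observes that the hypothesis forces every clique in the derangement graph of $G$ acting on the cosets of $U$ to have size at most $n=|G:U'|$ (via the pigeonhole on cosets of $U'$, which is the same injection $g\mapsto \alpha^g$ you use), and then invokes Theorem~\ref{thrm:main} to conclude $|G:U|\le f_1(n+1)$ \emph{only} when that action is innately transitive. Since the statement is a conjecture that the paper does not claim to prove in general, your honest identification of the innate-transitivity hypothesis as the exact gap, and your remark that the $M_{11}\mathrm{wr}A$ exceptions do not intervene for the clique formulation, are both consistent with the paper's discussion.
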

This conjecture phrased in terms of Kronecker classes is as follows.
\begin{conjecture}\label{conjecturePraeger-1}
There is an integer function $f$ such that,  if $K/k$ is an extension of degree $n$ of algebraic number fields and $L/k$ is Kronecker equivalent to $K/k$, then $|L:k| \le f(n)$.
\end{conjecture}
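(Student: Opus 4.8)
The plan is to reduce Conjecture~\ref{conjecturePraeger-1} to the group-theoretic Conjecture~\ref{conjecturePraeger0} through the Galois correspondence, and then to settle the latter, in the case where the relevant action is innately transitive, by means of Theorem~\ref{thrm:main}. First I would pick a Galois extension $M/k$ containing both $K$ and $L$ (for instance the Galois closure of the compositum $KL$ over $k$) and set $G=\mathrm{Gal}(M/k)$, $U=\mathrm{Gal}(M/K)$ and $V=\mathrm{Gal}(M/L)$. Then $|G:U|=[K:k]=n$ and $|G:V|=[L:k]$, and by the results of Jehne and Klingen recalled around~\eqref{perlis}, Kronecker equivalence of $K/k$ and $L/k$ is equivalent to $\bigcup_{g\in G}U^g=\bigcup_{g\in G}V^g$. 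Thus it suffices to bound $|G:V|$ by a function of $n$.

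The key observation is the one already highlighted after~\eqref{perlis}: the equality $\bigcup_{g}U^g=\bigcup_{g}V^g$ says exactly that the permutation representations of $G$ on the cosets of $U$ and on the cosets of $V$ have the same set $\mathcal{D}$ of derangements. Since the derangement graph $\mathrm{Cay}(G,\mathcal{D})$ depends only on its connection set $\mathcal{D}$, the two actions produce literally the same graph $\Gamma$, and in particular the same clique number $\omega:=\omega(\Gamma)$. Now I would combine two facts. On the one hand, in any transitive action of degree $d$ a clique of the derangement graph injects into the point set, via $g\mapsto \alpha^g$ for a fixed point $\alpha$ (if $\alpha^g=\alpha^h$ then $g h^{-1}$ fixes $\alpha$, contradicting that $g h^{-1}$ is a derangement); hence $\omega\le d$. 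Applied to the action on the cosets of $U$ this gives $\omega\le n$, so $\Gamma$ has no clique of size $n+1$. On the other hand, applying Theorem~\ref{thrm:main} to the action on the cosets of $V$, which has degree $|G:V|$ and which I assume to be innately transitive, the absence of a clique of size $n+1$ forces $|G:V|\le f_1(n+1)$. Setting $f(n):=f_1(n+1)$ and passing back through the Galois correspondence then yields $[L:k]\le f(n)$.

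One routine technical point needs attention: $G$ may act unfaithfully on the cosets of $V$, whereas Theorem~\ref{thrm:main} is phrased for permutation groups. I would therefore pass to the faithful quotient $\bar G=G/\mathrm{core}_G(V)$ acting on $|G:V|$ points. Since the kernel consists of elements fixing every point, and hence contains no derangement, any clique of $\Gamma$ meets each kernel-coset at most once; consequently the clique number is preserved under the projection $G\to\bar G$, and Theorem~\ref{thrm:main} may be applied to $\bar G$ without loss of generality.

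The hard part, and indeed the only obstacle, is the innate-transitivity hypothesis on the action on the cosets of $V$: Theorem~\ref{thrm:main} is available solely for innately transitive groups, so the argument above proves Conjecture~\ref{conjecturePraeger-1} only when $L$ corresponds to such an action. Removing this restriction amounts to establishing Question~6.1 in~\cite{KRS} for all transitive groups, which is still open; any extension of Theorem~\ref{thrm:main} beyond the innately transitive setting would immediately promote the conditional result obtained here to the full conjecture.
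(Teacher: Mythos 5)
You are attempting to prove what is, in the paper, an \emph{open conjecture}: Conjecture~\ref{conjecturePraeger-1} (Neumann--Praeger) is stated, not proved, and the paper claims only the conditional result recorded in the footnote of Section~\ref{coveringsKronecker}, namely that Theorem~\ref{thrm:main} implies Conjecture~\ref{conjecturePraeger0} (and hence gives evidence for Conjecture~\ref{conjecturePraeger-1}) \emph{when the relevant coset action is innately transitive}. Judged against that, your proposal is in substance the paper's own reduction, carried out correctly: via the Galois correspondence and the Jehne--Klingen criterion~\eqref{perlis}, Kronecker equivalence of $K/k$ and $L/k$ becomes $\bigcup_{g\in G}U^g=\bigcup_{g\in G}V^g$; this says the two coset actions of $G$ have the same set of derangements, hence the same derangement graph on the vertex set $G$; the degree-$n$ action on the cosets of $U$ caps the clique number at $n$ (your injection $g\mapsto\alpha^g$ is fine, and is equivalent to the paper's pigeonhole argument on cosets); and Theorem~\ref{thrm:main} then bounds $|G:V|$ by $f_1(n+1)$. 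Your technical point about passing to the faithful quotient $G/\mathrm{core}_G(V)$, and the observation that the clique number is preserved because the kernel contains no derangements, is also correct.

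The gap you flag in your last paragraph is, however, genuine and is precisely why the statement remains a conjecture: the hypothesis that the action of $G$ on the cosets of $V$ is innately transitive is not something you can arrange. The group $G=\mathrm{Gal}(M/k)$ acting on the cosets of $V$ is an essentially arbitrary finite transitive permutation group (every finite group occurs as a Galois group over some number field, so every transitive coset action occurs in this setting), and no choice of the ambient Galois extension $M$ improves the structure of this action while keeping control of $n$. Thus the missing case is exactly the general-transitive case of Question~6.1 in~\cite{KRS}, which is open. Your write-up should therefore be read as a correct proof of the paper's footnote claim --- the conjecture holds when the action of $G$ on the cosets of $V$ (equivalently, on the cosets of the subgroup $U$ in the notation of Conjecture~\ref{conjecturePraeger0}) is innately transitive --- and not as a proof of Conjecture~\ref{conjecturePraeger-1} itself.
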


As we mentioned above, with respect to (weak) normal coverings, the case of particular interest is when $U\le U'\unlhd G$.
\begin{conjecture}[{{Neumann, Praeger, see~\cite{Praeger2}}}] \label{conjecturePraeger1}
There is an integer function $g$ such that, if $U'$ is a finite group, $G$ is a group of automorphisms of $U'$ containing the inner automorphisms $\mathrm{Inn}(U')$ as a subgroup of index $n$, and $U$ is a subgroup of $U'$ with
$$\bigcup_{g\in G}U^g=U',$$
then $|G:U|\le g(n)$.
\end{conjecture}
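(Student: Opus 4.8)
The plan is to recast the conjecture as a statement about a single permutation action and then run an induction on a minimal normal subgroup of $G$, ordering configurations lexicographically by the pair $(n,|G|)$. First I would replace $G$ by its image on $\Omega=[G:U]$, the right cosets of $U$; passing to the quotient by $\mathrm{Core}_G(U)$ makes the action faithful and transitive, replaces $U'$ by its isomorphic image, and leaves both $[G:U']=n$ and $\bigcup_{g\in G}U^g=U'$ intact. Since $g$ fixes the coset $Ux$ exactly when $g\in U^{x^{-1}}$, the covering hypothesis says precisely that the elements possessing a fixed point are those of $U'$; equivalently the derangement set of this action is $G\setminus U'$. As $st^{-1}$ is a derangement if and only if $sU'\neq tU'$, a clique of the derangement graph $\Gamma$ is just a set of elements lying in pairwise distinct cosets of $U'$, so $\omega(\Gamma)=n$ and every semiregular subgroup of $G$, being a clique, has order at most $n$. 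The conjecture is thus equivalent to bounding $|\Omega|=|G:U|$ by a function of $n$.

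Next I would record a dichotomy for a minimal normal subgroup $M$ of $G$. Because $M\cap U'\unlhd G$ and $M$ is minimal, either $M\le U'$ or $M\cap U'=1$. In the second case every non-identity element of $M$ lies in $G\setminus U'$ and is therefore a derangement, so $M$ is semiregular and $|M|=[MU':U']$ divides $n$. In the first case every element of $M$ lies in $U'$ and hence fixes a point, so $M$ is intransitive with no derangements. In particular, if $G$ is innately transitive then its transitive minimal normal subgroup cannot lie in $U'$, so it is semiregular and transitive, hence regular, and $|\Omega|=|M|\le n$ at once; this disposes of the innately transitive case with no recourse to Theorem~\ref{thrm:main}.

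The substantive reduction is the case where some minimal normal subgroup $M$ satisfies $M\not\le U'$. Here I would pass to $\overline{G}=G/M$ acting on the set $\Omega/M$ of $M$-orbits. Since $M$ is transitive on each of its semiregular orbits, the setwise stabiliser of $M\alpha$ is $MG_\alpha$, and taking the union over $\alpha$ gives $M\bigcup_{\alpha}G_\alpha=MU'$; moreover every element of $MU'$ fixes some $M$-orbit setwise, because a representative in $U'$ already fixes a point. Thus the non-derangements of $\overline{G}$ on $\Omega/M$ form exactly the normal subgroup $MU'/M$, of index $[G:MU']=n/|M|<n$. This is again an instance of the problem, now with a strictly smaller value of the index parameter, so the induction yields $|\Omega|=|M|\cdot|\Omega/M|\le n\,g(n/|M|)$ and with it a rapidly growing but finite recursion for $g$.

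The remaining, and hardest, case is $\mathrm{soc}(G)\le U'$. Choosing a minimal normal $M\le U'$ and quotienting by $M$ preserves the index $n$ while lowering $|G|$, so the induction bounds the number of $M$-orbits $|\Omega/M|$ by $g(n)$; what is missing is a bound on the common orbit length $c$ of $M$. Since $M$ has no derangement, $M\cong T^d$ is the union of its point stabilisers, that is, it is covered by a $G$-invariant family of subgroups of index $c$; the regular action of $G/U'$ on the blocks confines these to boundedly many $\Aut(M)$-classes, and the embedding of $G$ in the holomorph of $\mathrm{soc}(G)$ couples $n$ to $|\Out(M)|$ and to the number of covering classes. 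Bounding $c$ by a function of $n$ is the crux of the whole argument; the plan is to attack it through the O'Nan--Scott description of innately transitive groups applied to the block stabilisers, together with Theorem~\ref{thrm:main2}, reducing matters to estimates, based on the Classification of the Finite Simple Groups, on how few conjugacy classes of index-$c$ subgroups can cover a characteristically simple group. This characteristically simple covering estimate is where I expect the essential difficulty to lie.
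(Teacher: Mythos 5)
You should be aware at the outset that the paper contains no proof of this statement to compare yours against: it is an open conjecture of Neumann and Praeger, recorded as Conjecture~\ref{conjecturePraeger1}, and the paper's contribution is only indirect evidence, namely that Theorem~\ref{thrm:main} settles the innately transitive case of the companion Conjecture~\ref{conjecturePraeger0}. So your proposal is an attempt on an open problem, and it has to be judged on its own merits.

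The reductions you actually carry out are correct. The reformulation (the covering hypothesis says exactly that the derangements of $G$ on $\Omega=[G:U]$ are $G\setminus U'$, whence $\omega(\Gamma_G)=n$), the dichotomy $M\le U'$ or $M\cap U'=1$ for a minimal normal subgroup $M$, the quotient step when $M\cap U'=1$ (which does produce a genuine instance with parameter $n/|M|<n$), and the observation that innate transitivity trivializes this normal-$U'$ version of the conjecture (the transitive minimal normal subgroup must meet $U'$ trivially, hence is regular of order dividing $n$) are all sound; the last point is a genuinely nice remark, since for Conjecture~\ref{conjecturePraeger0}, where $U'$ need not be normal, the paper needs the full strength of Theorem~\ref{thrm:main} to reach the same conclusion. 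But the case you leave open, $\mathrm{soc}(G)\le U'$, is not a technical remainder: it is precisely the open core of the conjecture. There $M=T^d$ is covered by the subgroups $(M\cap U)^g$, $g\in G$, which fall into at most $n$ classes under $U'$-conjugacy, so what you need is: a characteristically simple group covered by at most $n$ classes of subgroups of index $c$ (classes taken under a group of automorphisms containing the inner ones) has $c$ bounded in terms of $n$. No theorem of this kind exists; this is the quantitative (weak) normal covering problem of~\cite{Praeger2,BSW}, open in the generality you need. Moreover Theorem~\ref{thrm:main2} cannot be brought to bear, because the hypothesis does not localize to blocks: an element of $M$ may be a derangement on a given $M$-orbit while fixing points in another orbit, so no action of a block stabilizer inherits the covering condition, and your easy case shows that any configuration which \emph{is} innately transitive never reaches this situation at all.

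There is also a structural flaw in the induction itself. Even granting a bound $c\le h(n)$ in the hard case, a hard-case step preserves the parameter $n$ and multiplies your bound by $c$; the number of such steps is bounded only by the chief length of $G$, not by any function of $n$, so iterating yields a bound of the shape $h(n)^{O(\log|G|)}$ times the easy-case bound, which is not a function of $n$ alone. The lexicographic induction on $(n,|G|)$ only closes if the bound does not degrade along same-$n$ steps, and it degrades by a factor $c>1$ every time. What you would actually need is to bound, in a single stroke, $|R:R\cap U|$ where $R$ is the largest normal subgroup of $G$ contained in $U'$ --- that is, to bound the index of a subgroup whose $G$-conjugates, lying in at most $n$ conjugacy classes, cover the normal subgroup $R$. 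That is essentially the full strength of the conjecture, so the proposal, while a sensible organization of the easy reductions, leaves the problem where it found it.
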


Conjectures~\ref{conjecturePraeger0} and~\ref{conjecturePraeger1} can be phrased in terms of permutations groups: we focus on Conjecture~\ref{conjecturePraeger0}. Let $G,U,U'$ be as in the statement of Conjecture~\ref{conjecturePraeger0} and let $\Omega$ be the set of right cosets of $U$ in $G$. Now, $$\bigcup_{g\in G}U^g$$ is the set of elements of $G$ fixing some element of $\Omega$. If this union equals $\bigcup_{g\in G}U'^g$ and $|G:U'|=n$, then a clique in the derangement graph of $G$ in its action on $\Omega$ has cardinality at most $n$. 
In fact, let $C$ be a clique of size greater than $n$. Then by the pigeonhole principle, $C$ intersects a coset of $U'$ in at least two elements. 
Then the ratio $xy^{-1}$ lies in $U'$ and hence $xy^{-1}$ is conjugate to an element of $U$.  Therefore, $xy^{-1}$ fixes some point, contradicting the fact that $C$ is a clique.
Therefore Conjecture~\ref{conjecturePraeger0} can be seen as a particular case of Question~1.6 in~\cite{KRS}.

In particular, our Theorem~\ref{thrm:main} gives substantial new evidence\footnote{In fact, Theorem~\ref{thrm:main} implies the veracity of Conjecture~\ref{conjecturePraeger0} when the action of $G$ on the right cosets of $U$ is innately transitive.} to the veracity of Conjecture~\ref{conjecturePraeger0} and hence, in turn, to Conjecture~\ref{conjecturePraeger-1} on Kronecker classes.

\section{Proofs of Theorems~\ref{thrm:main} and~\ref{thrm:main1} from Theorem~\ref{thrm:main2}}\label{deduceeasystuff}
In this section, we show that Theorem~\ref{thrm:main2} implies Theorems~\ref{thrm:main} and~\ref{thrm:main1}.

\begin{proof}[Proof of Theorem~$\ref{thrm:main}$]
Let $f_2$ be the function from Theorem~\ref{thrm:main2} and let
\[
f_1(k)=\max(f_2(k),12^{\log_2(k)}).
\]
We show that Theorem~\ref{thrm:main} holds true with this choice of $f_1$.

As semiregular subgroups are cliques in the derangement graph, Theorem~\ref{thrm:main} follows immediately from Theorem~\ref{thrm:main2} using $f_1$, except when $G$ is primitive of degree $12^\kappa$ and $G= M_{11}\mathrm{wr}A$, for some positive integer $\kappa$ and some transitive subgroup $A$ of $\mathrm{Sym}(\kappa)$. Therefore, it suffices to deal with this case.

 Let $\Omega$ be the domain of $G$. Then $\Omega$ admits a Cartesian decomposition $\Delta^\kappa$, where $|\Delta|=12$ and $G$ acts on $\Delta^\kappa$ via its natural primitive product action. In particular, we identify $\Omega$ with $\Delta^\kappa$ and we denote the elements of $G$ as
$$(h_1,\ldots,h_\kappa)a,$$
with $h_1,\ldots,h_\kappa\in M_{11}$ and $a\in A$. Moreover, given $(\delta_1,\ldots,\delta_\kappa)\in \Omega$, we have
$$(\delta_1,\ldots,\delta_\kappa)^{(h_1,\ldots,h_\kappa)a}=(\delta_{1^{a^{-1}}}^{h_{1^{a^{-1}}}},\ldots,\delta_{\kappa^{a^{-1}}}^{h_{\kappa^{a^{-1}}}}).$$

 From Jordan's theorem, $M_{11}$ has a derangement $h$ in its action on $\Delta$. Now, the set
$$\{(h^{\varepsilon_1},h^{\varepsilon_2},\ldots,h^{\varepsilon_\kappa})\in M_{11}^\kappa\mid \varepsilon_1,\ldots,\varepsilon_\kappa\in \{0,1\}\}$$
has cardinality $2^\kappa$ and it is a clique in $\Gamma_G$. In particular, if $2^\kappa\ge k$, then $\Gamma_G$ has a clique of size at least $k$. Otherwise, $2^\kappa<k$ and hence $\kappa<\log_2(k)$. Therefore,
$$|\Omega|=12^\kappa<12^{\log_2(k)}\le f_1(k).$$
Thus, Theorem~\ref{thrm:main} follows also in this case.
\end{proof}

\begin{proof}[Proof of Theorem~$\ref{thrm:main1}$]
Let $G$ be innately transitive and assume that $\Gamma_G$ has no clique of size $4$. Arguing as in the previous proof, we may assume that $G$ is one of the groups appearing in parts~\eqref{thrmcase2}--\eqref{thrmcase5} of Theorem~\ref{thrm:main2}. Moreover, in part~\eqref{thrmcase5} we may assume that $\kappa=1$. We have checked with a computer, using the computer algebra system \texttt{magma}~\cite{magma}, that in the derangement graph of each  these permutation groups there is a clique of size $4$.
\end{proof}

Now, in the rest of this paper, we may focus only on Theorem~\ref{thrm:main2}.
\section{Reduction of Theorem~\ref{thrm:main2} to primitive simple groups}\label{sec:primitive}

We recall that a permutation group $G$ on $\Omega$ is \textit{\textbf{primitive}} if $\Omega$ admits no non-trivial $G$-invariant partition.\footnote{A partition $\pi$ of $\Omega$ is trivial if either each part of $\pi$ has cardinality $1$ and hence $\pi=\{\{\omega\}\mid \omega\in\Omega\}$, or $\pi$ consists of only one part and hence $\pi=\{\Omega\}$.} Moreover, $G$ is said to be \textit{\textbf{quasiprimitive}} if each non-identity normal subgroup of $G$ is transitive on $\Omega$. It is remarkable that these concepts are already present in the work of Galois, see~\cite{pi} for historical details.

Since the orbits of a normal subgroup of a transitive group form a system of imprimitivity, we deduce that each primitive group is quasiprimitive. Moreover, directly from the definition, each quasiprimitive group is innately transitive. Thus, we have the hierarchy
$$\hbox{primitive}\Longrightarrow\hbox{quasiprimitive}\Longrightarrow\hbox{innately transitive.}$$

\begin{lemma}\label{lemma:prel}Let $G$ be an innately transitive group on $\Omega$, let $N$ be a minimal normal subgroup of $G$ transitive on $\Omega$, let $\Sigma$ be a system of imprimitivity, let $\pi:G\to\mathrm{Sym}(\Sigma)$ be the natural homomorphism given by the action of $G$ on $\Sigma$ and let $G^\Sigma$ be the image of $\pi$. If $|\Sigma|>1$ and $\bar X$ is a semiregular subgroup of $G^\Sigma$, then $\pi^{-1}(\bar X)$ is a semiregular subgroup of $G$.
\end{lemma}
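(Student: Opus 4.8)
The plan is to reduce the semiregularity of $\pi^{-1}(\bar X)$ on $\Omega$ to the semiregularity of the single subgroup $K:=\ker\pi$ on $\Omega$, and then to establish the latter using the minimality and transitivity of $N$. The reduction is immediate: if $g\in\pi^{-1}(\bar X)$ fixes a point $\omega\in\Omega$, then $g$ fixes setwise the block $B\in\Sigma$ containing $\omega$, so $\pi(g)\in\bar X$ fixes the point $B$ of $\Sigma$; since $\bar X$ is semiregular on $\Sigma$ this gives $\pi(g)=1$, that is $g\in K$. Hence everything comes down to showing that $K$ has no non-identity element fixing a point of $\Omega$.

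First I would prove that $K\cap N=1$. Being the intersection of two normal subgroups of $G$, the subgroup $K\cap N$ is normal in $G$ and contained in $N$, so minimality of $N$ forces $K\cap N\in\{1,N\}$. If $K\cap N=N$, then $N\le K=\ker\pi$ acts trivially on $\Sigma$; but a transitive group is transitive on each of its systems of imprimitivity, so $N$ is transitive on $\Sigma$, whence $|\Sigma|=1$, contradicting $|\Sigma|>1$. Therefore $K\cap N=1$.

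Next, since $K$ and $N$ are normal in $G$ with $K\cap N=1$, the usual commutator argument yields $[K,N]\le K\cap N=1$, so $K$ centralizes $N$. Now I would spread a single fixed point across all of $\Omega$ using the transitivity of $N$: if $g\in K$ fixes $\omega$, then for any $\omega'\in\Omega$ we may write $\omega'=\omega^{n}$ with $n\in N$, and since $g$ and $n$ commute we obtain $(\omega')^{g}=\omega^{ng}=\omega^{gn}=(\omega^{g})^{n}=\omega^{n}=\omega'$. Thus $g$ fixes every point of $\Omega$, and as $G$ acts faithfully on $\Omega$ we conclude $g=1$. Hence $K$ is semiregular, and combined with the reduction above this proves the lemma.

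I do not anticipate a genuine obstacle: the content is the elementary interaction between a transitive normal subgroup, its centralizer, and the block action. The one place demanding care is the dichotomy for $K\cap N$, where the three hypotheses that $N$ is minimal normal, that $N$ is transitive, and that $|\Sigma|>1$ are precisely what exclude the degenerate possibility $N\le\ker\pi$; everything else then follows formally.
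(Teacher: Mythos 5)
Your proposal is correct and follows essentially the same route as the paper: reduce to showing $K=\ker\pi$ is semiregular, use minimality of $N$ together with $|\Sigma|>1$ to get $K\cap N=1$, deduce that $K$ centralizes the transitive subgroup $N$, and conclude semiregularity of $K$. The only difference is cosmetic: where you carry out the elementary computation showing that the centralizer of a transitive group is semiregular, the paper simply cites Theorem~4.2A of Dixon--Mortimer.
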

\begin{proof}
Let $K=\mathrm{Ker}(\pi)$. Since $N$ is a minimal normal subgroup of $G$, we have $N\le K$ or $K\cap N=1$. If $N\le K$, then $K$ is transitive because so is $N$. Since $K$ acts trivially on $\Sigma$, $K$ fixes setwise each element of $\Sigma$ and, since $|\Sigma|>1$, we deduce that $K$ is intransitive. This contradiction yields $K\cap N=1$. Hence $N$ centralizes $K$. Since $N$ is transitive on $\Omega$, we deduce from~\cite[Theorem~4.2A]{DM} that $K$ is semiregular on $\Omega$.

Let $X=\pi^{-1}(\bar X)$. We prove that $X$ is semiregular on $\Omega$. Indeed, let $\omega\in \Omega$ and let $\sigma\in \Sigma$ with $\omega\in \sigma$. Clearly, $X_\omega\le X_{\sigma}$ because each permutation of $G$ fixing $\omega$ must fix the block of the system of imprimitivity $\Sigma$ containing $\omega$. As $\bar X$ is semiregular on $\Sigma$, we have $\bar{X}_\sigma=1$, that is, $X_\sigma$ fixes setwise each element of $\Sigma$. Therefore, $\pi(X_\sigma)=1$ and $X_\omega\le X_{\sigma}\le \mathrm{Ker}(\pi)=K$. As $K$ is semiregular on $\Omega$, we obtain $X_\omega\le K_\omega=1$. This shows that $X$ is semiregular on $\Omega$.
\end{proof}

The scope of this section is to reduce the proof of Theorem~\ref{thrm:main2} to the case of simple primitive groups. The modern key for analyzing a finite primitive permutation group $G$ is to study
the \textit{\textbf{socle}} $N$ of $G$, that is, the subgroup generated by the minimal normal subgroups of $G$. The socle of a non-trivial finite group is isomorphic to the non-trivial
direct product of simple groups; moreover, for finite primitive groups, these simple groups are pairwise isomorphic. The O'Nan-Scott theorem describes in detail
the embedding of $N$ in $G$ and collects some useful information about the action
of $N$. In~\cite[Theorem]{LPSLPS}, five types of primitive groups are defined (depending
on the group- and action-structure of the socle), namely HA (Affine), AS (Almost
Simple), SD (Simple Diagonal), PA (Product Action) and TW (Twisted Wreath),
and it is shown that every primitive group belongs to exactly one of these types.
We remark that in~\cite{19} this subdivision into types is refined, namely the PA type
in~\cite{LPSLPS} is partitioned in four parts, which are called HS (Holomorphic Simple), HC
(Holomorphic Compound), CD (Compound Diagonal) and PA. For what follows,
we find it convenient to use this subdivision into eight types of the finite primitive
permutation groups.\footnote{This division has the advantage that there are no overlaps between the eight O'Nan-Scott types of primitive permutation groups.}

We start with a technical lemma dealing with the exceptional family involving the Mathieu group arising in Theorem~\ref{thrm:main2}.
\begin{lemma}\label{lemma:technical}
Let $G$ be an innately transitive group on $\Omega$, let $\Sigma$ be a system of imprimitivity such that the permutation group $G^\Sigma$ induced by $G$ on $\Sigma$ is isomorphic to $M_{11}\mathrm{wr} A$ with its natural primitive product action on $12^\kappa$ points, for some positive integer $\kappa$ and some transitive subgroup $A$ of $\mathrm{Sym}(\kappa)$, and let $\pi:G\to\mathrm{Sym}(\Sigma)$ be the natural homomorphism given by the action of $G$ on $\Sigma$. Then  either $G$ in its action on $\Omega$ has semiregular subgroups of order at least $\min(|\mathrm{Ker}(\pi)|\cdot 11^\kappa,660^\kappa)$, or
$\Sigma=\Omega$ and the action of $G=M_{11}\mathrm{wr} A$ on $\Omega$ is the natural primitive product action on $12^\kappa$ points.
\end{lemma}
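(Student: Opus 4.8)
The plan is to run the argument through the kernel $K=\mathrm{Ker}(\pi)$ and the transitive minimal normal subgroup $N$ together. First I would record, exactly as in the proof of Lemma~\ref{lemma:prel}, that $K$ is semiregular on $\Omega$: since $|\Sigma|>1$ one gets $N\cap K=1$, so $N$ centralizes $K$, and transitivity of $N$ with~\cite[Theorem~4.2A]{DM} forces $K$ to be semiregular. Thus $G$ already carries a semiregular subgroup of order $|K|$. Next I would pin down $N$: as $\pi|_N$ is injective, $N\cong N^\Sigma\unlhd G^\Sigma$ is a transitive normal subgroup, so $N^\Sigma$ contains the unique minimal normal subgroup $\mathrm{soc}(G^\Sigma)=M_{11}^\kappa$ (the base group of the product action); since $N$ is characteristically simple while the only normal subgroups of $G^\Sigma$ above the base group are $M_{11}^\kappa\rtimes B$ with $B\unlhd A$, comparison forces $N^\Sigma=M_{11}^\kappa$ and $N\cong M_{11}^\kappa$. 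In particular the base group of $G^\Sigma$ is $\pi(N)$.

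The decisive reduction is the following criterion, which I would prove directly from the block structure: for $\bar X\le G^\Sigma$, the preimage $\pi^{-1}(\bar X)$ is semiregular on $\Omega$ if and only if $\bar X\cap\pi(G_\omega)=1$ for every $\omega\in\Omega$. Indeed, any element of $\pi^{-1}(\bar X)$ fixing $\omega$ fixes the block $\sigma\ni\omega$, and since $K\cap G_\omega=1$ the map $\pi$ identifies $\pi^{-1}(\bar X)\cap G_\omega$ with $\bar X\cap\pi(G_\omega)$. As the point stabilizers form a single $G$-class, the task becomes purely internal to $G^\Sigma$: find a subgroup $\bar X$ meeting every conjugate of $H:=\pi(G_{\omega_0})\le(G^\Sigma)_{\sigma_0}$ trivially; then $\pi^{-1}(\bar X)$ is semiregular of order $|K|\cdot|\bar X|$, and it suffices to make $|\bar X|\ge\min(11^\kappa,660^\kappa/|K|)$.

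For the main case I would take $\bar X=\bar P=P_1\times\cdots\times P_\kappa$, the product of the Sylow $11$-subgroups of the $\kappa$ factors of $\pi(N)=M_{11}^\kappa$, so $|\bar P|=11^\kappa$. If $\bar P\cap H^{\bar g}=1$ for all $\bar g\in G^\Sigma$, then $\pi^{-1}(\bar P)$ is semiregular of order $11^\kappa|K|$ and we are done. The failure of this is controlled entirely by divisibility: $\bar P$ meets a conjugate of $H$ nontrivially exactly when some point stabilizer absorbs a Sylow $11$-subgroup of one of the factors, i.e.\ when $H$ is ``large'' in the relevant coordinates.

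The hard case, and the main obstacle, is precisely when $H$ absorbs these Sylow $11$-subgroups and the cheap lift fails; here I would bring in innate transitivity. On a single block $\sigma$ the kernel $K$ acts with all orbits of size $|K|$, and $N_\sigma$ acts as well, centralizing $K$ because $N$ centralizes $K$ globally. In the extreme case $H=(G^\Sigma)_\sigma$, where $K$ is regular on $\sigma$, the image of $N_\sigma$ in $\Sym(\sigma)$ is forced into $\cent{\Sym(\sigma)}{K}$, a regular group isomorphic to $K$; since $\pi(N_\sigma)=\PSL_2(11)^\kappa$ is perfect and transitive on $\sigma$, this realizes $K$ as a perfect quotient of $\PSL_2(11)^\kappa$. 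Both ends of this dichotomy are favourable: if $K=1$ then $\sigma$ is a single point and $\Sigma=\Omega$, the stated exception, whereas if $K\cong\PSL_2(11)^\kappa$ then $K$ itself is semiregular of order $660^\kappa$. The delicate regime, where the real work lies, is the interpolation between these extremes — $H$ absorbing the Sylow $11$-subgroups in only some coordinates, so that $K$ is no longer regular on $\sigma$ and $\cent{\Sym(\sigma)}{K}$ is no longer forced to be so small. There I expect to run the centralizer argument orbit-by-orbit and coordinate-by-coordinate in the product action, using that the only overgroups of a Sylow $11$-subgroup in $M_{11}$ are $11$, $11{:}5$ and $\PSL_2(11)$, to assemble a semiregular subgroup whose order reaches $\min(|K|\cdot11^\kappa,660^\kappa)$.
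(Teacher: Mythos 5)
Your preliminary reductions are all correct and essentially match the paper's: the semiregularity of $K=\mathrm{Ker}(\pi)$, the identification $N\cong\pi(N)=M_{11}^\kappa$ with the base group, and the criterion that $\pi^{-1}(\bar X)$ is semiregular on $\Omega$ if and only if $\bar X$ meets every conjugate of $\pi(G_\omega)$ trivially (a slight sharpening of Lemma~\ref{lemma:prel}). The problem is that everything after that is a plan rather than a proof. The Sylow-$11$ lift $\bar P$ genuinely fails in cases that actually occur: for instance, when $G_\omega$ sits inside $G_\sigma$ so that its image in a coordinate contains $11{:}5$ or all of $\PSL_2(11)$, every conjugate-avoidance argument based on order-$11$ elements collapses, and the semiregular subgroup one must produce instead is \emph{not} an $11$-group (in the action of $M_{11}$ on the cosets of $11{:}5$ the paper exhibits a semiregular subgroup of order $144$, found by computer). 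Your closing sentence --- ``There I expect to run the centralizer argument orbit-by-orbit and coordinate-by-coordinate \dots to assemble a semiregular subgroup'' --- is precisely the content of the lemma, and it is left unexecuted. The paper's mechanism for this step is quite different: it refines $\Sigma$ to a system $\Lambda$ (via a maximal overgroup $R$ of $G_\omega$ in $G_\sigma$ when $K=1$, and via the $K$-orbits when $K\ne1$), shows $G_\sigma\cong\PSL_2(11)\mathrm{wr}A$ acts primitively and faithfully on the blocks of $\Lambda$ inside $\sigma$, and then runs the O'Nan--Scott theorem on that action, treating the AS/PA, SD/CD and TW configurations separately with explicit constructions. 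Nothing in your proposal substitutes for that case analysis; in particular the possibility that $G_\omega$ projects onto diagonal subgroups spread across several coordinates (the SD/CD configurations) is not visible in your coordinate-by-coordinate framework.

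There is also a concrete gap at the ``extreme'' end you do discuss. From the centralizer argument you correctly get that $K$ is a quotient of $N_\sigma\cong\PSL_2(11)^\kappa$, hence $K\cong\PSL_2(11)^\ell$ for some $0\le\ell\le\kappa$, but you then present only the two endpoints $\ell=0$ and $\ell=\kappa$ as a ``dichotomy''. For $0<\ell<\kappa$ the subgroup $K$ alone has order $660^\ell$, which falls short of $\min(|K|\cdot11^\kappa,660^\kappa)$ (e.g.\ $\ell=1$, $\kappa=2$). The paper closes this by observing that $N_\omega=\mathrm{Ker}(\psi)\cong\PSL_2(11)^{\kappa-\ell}$ is normal in $G_\omega$ while $G_\omega$ permutes the $\kappa$ simple factors of $N_\lambda$ transitively, forcing $\ell=\kappa$; some argument of this kind is indispensable and is absent from your sketch.
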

\begin{proof}
Let  $K$ be the kernel of $\pi$. By definition, $G^\Sigma$ is the image of $\pi$. Let $N$ be a minimal normal subgroup of $G$ with $N$ transitive on $\Omega$: the existence of $N$ is guaranteed by the fact that $G$ is innately transitive on $\Omega$.

 We first assume $K=1$. As $G\cong G^\Sigma=M_{11}\mathrm{wr} A$, we deduce $N$ is the unique minimal normal subgroup of $G$, $G$ is quasiprimitive on $\Omega$ and $N=M_{11}^\kappa$. Let $\sigma\in \Sigma$ and let $\omega\in\sigma$. Since $G^\Sigma$ is endowed with its natural primitive product action of degree $12^\kappa$, we get $$G_\sigma=\mathrm{PSL}_2(11)\mathrm{wr}A\hbox{ and }N_\sigma=\mathrm{PSL}_2(11)^\kappa.$$

If $\Sigma$ is the trivial system of imprimitivity $\{\{\omega\}\mid \omega\in \Omega\}$, then the action of $G=M_{11}\mathrm{wr} A$ on $\Omega$ is the natural primitive product action on $12^\kappa$ points and the lemma is satisfied. Therefore, for the rest of the proof we assume that $\Sigma$ is not the trivial system of imprimitivity. Therefore, $G$ is imprimitive on $\Omega$ and $G_\omega<G_\sigma$. In particular, there exists a maximal subgroup $R$ of $G_\sigma$ with $G_\omega\le R$.
Since there is a one to one order-reversing correspondence between the lattice of subgroups of $G$ containing $G_\omega$ and the systems of imprimitivity for $G$ acting on $\Omega$, $R$ corresponds to the stabilizer of a block $\lambda$ in a  system of imprimitivity, $\Lambda$ say. As $G_\lambda=R\le G_\sigma$, $\Lambda$ is a refinement of the system of imprimitivity $\Sigma$. See Figure~\ref{fig:23}.

Set $H=G_\sigma$ and $\Lambda_\sigma=\{\mu\in \Lambda\mid \mu\subseteq \sigma\}$. We claim that $H$ acts primitively and faithfully on $\Lambda_\sigma$. The fact that $H$ acts primitively on $\Lambda_\sigma$ follows from the fact that, by definition, $R$ is a maximal subgroup of $G_\sigma=H$ and from the fact that $R=G_\lambda$ is the stabilizer of the part $\lambda\in\Lambda_\sigma$ in the system of imprimitivity $\Lambda$. Let $$L=\bigcap_{h\in H}R^h.$$
Observe that $\mathrm{PSL}_2(11)^\kappa$ is the unique minimal normal subgroup of $H$. Therefore, if $H$ were not faithful on $\Lambda_\sigma$, that is $L\ne 1$, then $L$ contains the socle $\mathrm{PSL}_2(11)^\kappa=N_\sigma$ of $H$. Now, since $N$ is transitive on $\Omega$, we have $G=G_\omega N$. Intersecting both sides of this equality with $G_\sigma$ and using the modular law, we deduce $G_\sigma=G_\omega N_\sigma$. Therefore,
$N_\sigma$ seen as a permutation group on $\Omega$ is transitive on the points contained in the block $\sigma$. As $L\ge N_\sigma$, we deduce that $L$ is transitive on the points contained in the block $\sigma$, which is a contradiction because $L\le R=G_\lambda$ fixes setwise the subset $\lambda$ of $\Omega$ and $\lambda\subsetneq \sigma$.

\begin{figure}
\begin{tikzpicture}
\draw[very thick] (-4,-2) rectangle (4,2);
\draw[thick] (-3.9,-1.9)  rectangle (-2.1,1.9);
\draw[thick] (-1.9,-1.9)  rectangle (-0.1,1.9);
\draw[thick] (2.1,-1.9)  rectangle (3.9,1.9);
\draw (-3.8,-1.8)  rectangle (-2.2,-1.1);
\draw (-3.8,-1)  rectangle (-2.2,-.2);
\draw (-3.8,1.8)  rectangle (-2.2,1);
\draw (-1.8,-1.8)  rectangle (-.2,-1.1);
\draw (-1.8,-1)  rectangle (-.2,-.2);
\draw (-1.8,1.8)  rectangle (-.2,1);
\draw (3.8,-1.8)  rectangle (2.2,-1.1);
\draw (3.8,-1)  rectangle (2.2,-.2);
\draw (3.8,1.8)  rectangle (2.2,1);
\fill (.1,0) circle (.5pt);
\fill (.3,0) circle (.5pt);
\fill (.5,0) circle (.5pt);
\fill (.7,0) circle (.5pt);
\fill (.9,0) circle (.5pt);
\fill (1.1,0) circle (.5pt);
\fill (1.3,0) circle (.5pt);
\fill (1.5,0) circle (.5pt);
\fill (1.7,0) circle (.5pt);
\fill (1.9,0) circle (.5pt);
\fill (-1,1.5) circle (1pt);
\fill (-3,0) circle (.5pt);
\fill (-3,0.2) circle (.5pt);
\fill (-3,0.4) circle (.5pt);
\fill (-3,0.6) circle (.5pt);
\fill (-3,0.8) circle (.5pt);
\fill (-1,0) circle (.5pt);
\fill (-1,0.2) circle (.5pt);
\fill (-1,0.4) circle (.5pt);
\fill (-1,0.6) circle (.5pt);
\fill (-1,0.8) circle (.5pt);
\fill (3,0) circle (.5pt);
\fill (3,0.2) circle (.5pt);
\fill (3,0.4) circle (.5pt);
\fill (3,0.6) circle (.5pt);
\fill (3,0.8) circle (.5pt);
\node[below] at (-1,1.5){$\omega$};
\node [below] at (-.4,1){$\lambda$};
\node [below] at (0.1,1.8){$\sigma$};
\node [below] at (4.2,1.8){$\Omega$};
\end{tikzpicture}
\caption{Systems of imprimitivity $\Sigma$ and $\Lambda$: $\Sigma$ is shown with thick lines}\label{fig:23}
\end{figure}
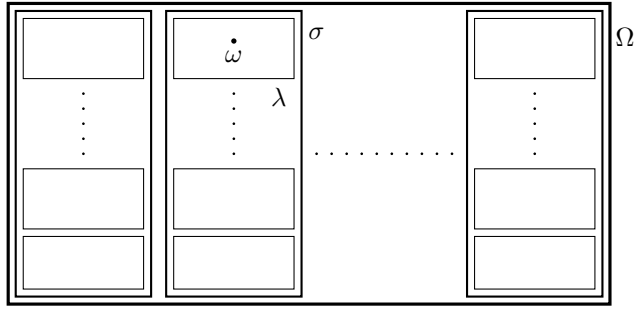

 We apply the O'Nan-Scott theorem to the primitive permutation group $H$ in its action on $\Lambda_\sigma$. 
 Since H, as an abstract group, is isomorphic to $\mathrm{PSL}_2(11)\mathrm{wr} A$, $H$ in its primitive action 
 on $\Lambda_\sigma$ is of type
 \begin{itemize}
 \item AS (when $\kappa=1$), or PA (when $\kappa>1$), or
 \item SD, or CD, or
 \item TW.
 \end{itemize} We deal with each of these cases in turn.

Assume that $H$ in its action on $\Lambda_\sigma$ has type AS or PA. Thus, we have $$G_\lambda=R=B\mathrm{wr}A,$$ for some maximal subgroup $B$ of $\mathrm{PSL}_2(11)$. This shows that $G$ in its action on $\Lambda$ has  stabilizer the wreath product $B\mathrm{wr}A$. Therefore, $\Lambda$ admits a $G$-invariant Cartesian decomposition $\Lambda'^\kappa$, where $\Lambda'$ is the set of right cosets of $B$ in $M_{11}$ and has cardinality $|M_{11}:B|$. Now, $\mathrm{PSL}_2(11)$ has four conjugacy classes of maximal subgroups: isomorphic to $11:5$, $6:2$, and two conjugacy classes isomorphic to $\mathrm{Alt}(5)$. Therefore, $B$ is $M_{11}$-conjugate to one of these five subgroups. We have computed with the auxiliary help of a computer these five permutation representations and we have computed their semiregular subgroups: in the action of $M_{11}$ on the cosets of $11:5$ there are semiregular subgroups of order $144$, in the action of $M_{11}$ on the cosets of $6:2$ there are semiregular subgroups of order $55$, in the action of $M_{11}$ on the cosets of $\mathrm{Alt}(5)$ (for each of the two choices of $M_{11}$-conjugacy classes) there are semiregular subgroups of order $11$. In particular, $M_{11}$ in its action on $\Delta'$ has semiregular subgroups of order at least $11$. Therefore, $G=M_{11}\mathrm{wr}A$ in its action on $\Lambda=\Delta'^\kappa$ has semiregular subgroups of order at least $11^\kappa$. Applying Lemma~\ref{lemma:prel} (with $\Sigma=\Lambda$), we deduce that $G$ in its action on $\Omega$ has semiregular subgroups of order at least $11^\kappa$.

Assume that $H$ in its action on $\Lambda_\sigma$ has type SD or CD. Recall that $N_\sigma=\mathrm{PSL}_2(11)^\kappa$ is the socle of $H$. Let $T_1,\ldots,T_\kappa$ be the $\kappa$ simple direct factors of $N_\sigma$. Then
$(N_\sigma)_\lambda=N_\lambda$ is isomorphic to the direct product of $a$ diagonal subgroups, indeed, up to relabeling the indexed set, there exists a divisor\footnote{When $a=\kappa$, $H$ has type SD, whereas when $1<a<\kappa$, $H$ has type CD.} $a>1$ of $\kappa$ such that
$$N_\lambda=\mathrm{Diag}(T_1\times\cdots\times T_{a})\times \mathrm{Diag}(T_{a+1}\times\cdots \times T_{2a})\times\cdots\times\mathrm{Diag}(T_{\kappa-a+1}\times\cdots\times T_\kappa).
$$
Now, if we let
$$X=\{(x_1,\ldots,x_\kappa)\in N_\sigma=\mathrm{PSL}_2(11)^\kappa\mid x_{ia}=1\, \forall i\in \{1,\ldots,\kappa/a\}\},$$
then we see that $X\cap N_\lambda=1$. Therefore, $X$ acts semiregularly on $\Lambda_\sigma$. What is more,  from the definition of  $X$ we deduce $N_{\lambda^n}\cap X=N_\lambda^n\cap X=1$, $\forall n\in N$. As $N$ is transitive on $\Lambda$, we get that $X$ is semiregular on $\Lambda$. Applying Lemma~\ref{lemma:prel} (with $\Sigma=\Lambda$), we deduce that $G$ in its action on $\Omega$ has semiregular subgroups of order at least $|X|=|\mathrm{PSL}_2(11)|^{\kappa-\kappa/a}=660^{\kappa-\kappa/a}\ge 660^{\kappa/2}\ge 11^\kappa$.

Assume that $H$ in its action on $\Lambda_\sigma$ has type TW. Then the socle $N_\sigma$ of $H=G_\sigma$ acts regularly on $\Lambda_\sigma$, that is, $N_\lambda=1$. As $N_\omega\le N_\lambda=1$, we deduce $N_\omega=1$ and hence $N$ is regular on $\Omega$. So $G$ in its action on $\Omega$ has semiregular subgroups of order at least $|N|=|M_{11}|^{\kappa}\ge 11^\kappa$.

\smallskip

It remains to consider the case that $G$ does not act faithfully on $\Sigma$, that is $K\ne 1$; we pivot on the previous part of the proof. Let $\Lambda$ be the system of imprimitivity consisting of the $K$-orbits, that is, $\Lambda=\{\omega^K\mid \omega\in \Omega\}$. Let $\omega\in \Omega$, let $\sigma\in \Sigma$ with $\omega\in\sigma$ and let $\lambda=\omega^K\in\Lambda$. Observe that the stabilizer of the block $\lambda$ in $G$ is $G_\lambda=KG_\omega$. As $K$ and $G_\omega$ are both subgroups of $G_\sigma$, we get $G_\lambda\le G_\sigma$. Therefore, $\Lambda$ is a refinement of the the system of imprimitivity $\Sigma$. See again Figure~\ref{fig:23}: here the system of imprimitivity $\Lambda$ is formed by the $K$-orbits.

We have
$$K\le \bigcap_{g\in G}(KG_\omega)^g\le \bigcap_{g\in G}G_\sigma^g=\mathrm{Ker}(\pi)=K.$$
Therefore, $K$ is also the kernel of the action of $G$ on $\Lambda$. In particular, applying the first part of the proof with the group $G$ replaced by $G/K$ and with the set $\Omega$ replaced by $\Lambda$, we deduce that either
\begin{itemize}
\item $G/K$ in its action on $\Lambda$ has a semiregular subgroup of order at least $11^\kappa$, or
\item $\Lambda=\Sigma$ and the action of $G/K$ on $\Lambda$ is the natural primitive product action on $12^\kappa$ points.
\end{itemize}
In the first case, Lemma~\ref{lemma:prel} implies that $G$ in its action on $\Omega$ has a semiregular subgroup of order at least $$|K|\cdot 11^\kappa\ge\min(|\mathrm{Ker}(\pi)|\cdot 11^\kappa,660^\kappa).$$
This concludes the analysis of the first case.\footnote{The relevance of $600^\kappa$ in the inequality above arises when dealing with the second case.}

We deal with the second case. Observe that in this case Figure~\ref{fig:23} is somehow misleading, because in this case we have $\Lambda=\Sigma$. Let $M=NK=N\times K$. We have
\begin{align}\label{2}
M&=NM_\omega,\\\label{1}
M_\lambda&=N_\lambda\times K,
\end{align}
where the first equality follows from the fact that $N$ is transitive on $\Omega$ and the second equality follows from the fact that $K$ fixes setwise the $K$-orbit $\lambda=\omega^K$. Since $M_\omega \le M_\lambda=N_\lambda\times K$, we may write each element of $m\in M_\omega$ as an ordered pair $ab$, for a unique $a\in N_\lambda$ and a unique $b\in K$. Let $\pi_{N_\lambda}:M_\omega\to N_\lambda$ and $\pi_K:M_{\omega}\to K$ be the natural projections. From~\eqref{2}, $\pi_K$ is surjective. Let $m\in \mathrm{Ker}(\pi_{N_\lambda})$. Then $m\in K\cap M_\omega=K_\omega=1$, because $K$ is semiregular on $\Omega$. Thus $\pi_{N_\lambda}$ is injective. Moreover, since $M_\lambda$ is transitive on the points contained in the block $\lambda$ and since $\lambda=\omega^K$, from~\eqref{1}, we deduce $$|N_\lambda||K|=|M_\lambda|=|M_\omega||\omega^K|=|M_\omega||K:K_\omega|=|M_\omega||K|.$$ This yields $|N_\lambda|=|M_\omega|$. As $\pi_{N_\lambda}:M_\omega\to N_\lambda$ is injective, we obtain that $\pi_{N_\lambda}$ is surjective and hence it is a bijection.  This shows that $$\psi=\pi_{N_\lambda}^{-1}\circ \pi_K:N_\lambda\to K$$ is a surjective group homomorphism. Furthermore, from the definitions of $\pi_{N_\lambda}$ and $\pi_K$, we have $$M_\omega=\{aa^\psi\mid a\in N_\lambda\}.$$

Recall that, in the case under consideration, $\Lambda=\Sigma$. As $N_\lambda=N_\sigma=\mathrm{PSL}_2(11)^\kappa$ and as $\psi:N_\lambda\to K$ is a surjective group homomorphism, we get $K\cong \mathrm{PSL}_2(11)^\ell$ for some $1\le \ell\le \kappa$, and $\mathrm{Ker}(\psi)=\mathrm{PSL}_2(11)^{\kappa-\ell}$.

Now, if $a\in N_\omega$, then $a\in M_\omega$ and hence $a\in \mathrm{Ker}(\psi)$. Conversely, if $a\in\mathrm{Ker}(\psi)$, then $a=aa^\psi\in M_\omega\cap N=N_\omega$. This yields $\mathrm{PSL}_2(11)^{\kappa-\ell}=\mathrm{Ker}(\psi)=N_\omega$.

Since $N$ is transitive on $\Omega$, we have $G=NG_\omega$. This implies that $G_\omega$ acts transitively by conjugation on the $\kappa$ simple direct factors of $N=M_{11}^\kappa$. Thus $G_\omega$ acts transitively by conjugation on $\kappa$ the simple direct factors of $N_\lambda=\mathrm{PSL}_2(11)^\kappa$. As $N\unlhd G$, we have $N_\omega\unlhd G_\omega$. Putting together the fact that $G_\omega$ acts transitively on the simple direct factors of $N_\lambda=\mathrm{PSL}_2(11)^\kappa$ and the fact that $N_\omega=\mathrm{PSL}_2(11)^{\kappa-\ell}\unlhd G_\omega$, we deduce $\ell=\kappa$. Therefore, $|K|=|\mathrm{PSL}_2(11)^\kappa|$ and hence  $G$ in its action on $\Omega$ has a semiregular subgroup of order at least $|K|=660^\kappa\ge\min(|\mathrm{Ker}(\pi)|\cdot 11^\kappa,660^\kappa).$
\end{proof}

 Using Lemma~\ref{lemma:technical}, we can reduce Theorem~\ref{thrm:main2} to the realm of primitive groups.
\begin{proposition}\label{prop:reduction}
Suppose that Theorem~$\ref{thrm:main2}$ holds true for primitive permutation groups. Then Theorem~$\ref{thrm:main2}$ holds true.
\end{proposition}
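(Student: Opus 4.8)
The plan is to pass from an arbitrary innately transitive $G$ to a primitive quotient on a suitable block system, to transport the ``no large semiregular subgroup'' hypothesis to that quotient via Lemma~\ref{lemma:prel}, and finally to bound the degree of $G$ by the order of its transitive plinth, which is itself controlled by the (bounded) degree of the quotient. The primitive case is the hypothesis, so I would immediately assume $G$ is imprimitive on $\Omega$, with transitive minimal normal subgroup $N$, and that $G$ has no semiregular subgroup of order at least $k$. Choosing a maximal subgroup $M$ with $G_\omega\le M<G$ (so $M>G_\omega$ by imprimitivity), let $\Sigma$ be the block system $\{\omega^M:\omega\in\Omega\}$, let $\pi\colon G\to\mathrm{Sym}(\Sigma)$ be the induced map, and $K=\mathrm{Ker}(\pi)$. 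Then $|\Sigma|=|G:M|>1$, the block size $|M:G_\omega|\ge 2$ gives $|\Sigma|<n$, and $G^\Sigma=G/K$ is primitive because $M$ is maximal. Exactly as in the proof of Lemma~\ref{lemma:prel}, minimality of $N$ together with $|\Sigma|>1$ forces $N\cap K=1$; consequently $K$ is semiregular on $\Omega$, whence $|K|<k$, and $N\cong N^\Sigma:=\pi(N)\le G^\Sigma$.

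The second step transfers the hypothesis to the quotient. If some $\bar X\le G^\Sigma$ were semiregular of order at least $k$, then Lemma~\ref{lemma:prel} would make $\pi^{-1}(\bar X)$ a semiregular subgroup of $G$ of order $|K|\cdot|\bar X|\ge k$, a contradiction; hence $G^\Sigma$ has no semiregular subgroup of order at least $k$. Applying the assumed primitive case of Theorem~\ref{thrm:main2} to $G^\Sigma$ produces a function $f$ such that either $|\Sigma|\le f(k)$, or $G^\Sigma\cong M_{11}\mathrm{wr}A$ in its product action on $12^\kappa$ points.

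The crucial point is that no separate bound on the block size is required. Since $G^\Sigma$ is primitive of degree $m=|\Sigma|$, it embeds in $\mathrm{Sym}(m)$, so $|N|=|N^\Sigma|\le|G^\Sigma|\le m!$; as $N$ is transitive on $\Omega$, we get $n=|N:N_\omega|\le|N|\le m!$. Thus in the first case $n\le (f(k))!$. In the Mathieu case I would invoke Lemma~\ref{lemma:technical}: because $G$ is imprimitive the blocks are nontrivial, so $\Sigma\ne\Omega$ and the second alternative of that lemma is excluded; hence $G$ has a semiregular subgroup of order at least $\min(|K|\cdot 11^\kappa,660^\kappa)\ge 11^\kappa$, and the hypothesis forces $11^\kappa<k$. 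This bounds $\kappa$, hence $m=12^\kappa$ and $|N|\le|G^\Sigma|=|M_{11}|^\kappa|A|$, so again $n\le|N|$ is bounded in terms of $k$. Declaring the global function to dominate $(f(k))!$, these finitely many Mathieu bounds, and $k$ itself (to absorb the primitive exception), one obtains the general statement; note that the genuine product-action exception is inherited only when $G$ is already primitive, since every imprimitive $G$ above the Mathieu quotient has been given a degree bound.

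For the ``moreover'' part with $k=4$, the same reduction shows that any imprimitive $G$ with no semiregular subgroup of order $4$ has degree bounded by an absolute constant, so the claim reduces to a finite check confirming that no imprimitive example arises beyond $n\le 3$ (case~\eqref{thrmcase1}); the primitive outputs cases~\eqref{thrmcase2}--\eqref{thrmcase5} are inherited directly from the hypothesis. I expect the main obstacle to be precisely the Mathieu quotient: ensuring that an imprimitive $G$ cannot sit above the elusive product action of $M_{11}\mathrm{wr}A$ with unbounded degree is exactly what Lemma~\ref{lemma:technical} secures, while the verification $N\cap K=1$, guaranteeing that the plinth injects into the primitive quotient so that $n\le|N|$ applies, is the other step where the minimality of $N$ is essential.
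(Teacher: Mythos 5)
Your proof is correct and follows essentially the same route as the paper's: pass to a maximal (hence primitive) block system $\Sigma$, transfer the no-large-semiregular-subgroup hypothesis to $G^\Sigma$ via Lemma~\ref{lemma:prel}, dispose of the $M_{11}\mathrm{wr}A$ quotient via Lemma~\ref{lemma:technical}, and finish the $k=4$ case by a finite (computer) check. The only cosmetic difference is the final degree bound: you use $N\cap K=1$ to get $n\le |N|\le |G^\Sigma|\le |\Sigma|!$, whereas the paper bounds $n\le |G|=|G^\Sigma|\,|K|$ using $|K|\le k-1$; both are valid.
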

\begin{proof}
Let $g:\mathbb{N}\to\mathbb{N}$ be a function witnessing that Theorem~\ref{thrm:main2} holds true for primitive permutation groups. This means that, if $G$ is primitive of degree $n$ and $G$ has no semiregular subgroup of order at least $k$, then either $n\le g(k)$, or $G$ has degree $12^\kappa$ and $G=M_{11}\mathrm{wr}A$, for some positive integer $\kappa$ and for some transitive subgroup $A$ of $\mathrm{Sym}(\kappa)$.  Moreover, if $G$ has no semiregular subgroup of order at least $4$, then one of parts~\eqref{thrmcase1}--\eqref{thrmcase5} holds.

Let $f:\mathbb{N}\to\mathbb{N}$ be the function defined by $$f(k)=\max(g(k)!k,k!(k-1)).$$ We show that the first part of Theorem~\ref{thrm:main2} holds true using this function $f$. Let $G$ be an innately transitive group of degree $n$ and suppose that $G$ has no semiregular subgroup of order at least $k$. If $G$ is primitive, then we have nothing to prove because we are assuming the veracity of Theorem~\ref{thrm:main2} for primitive groups. Therefore, we may suppose that $G$ is imprimitive.

Let $\Omega$ be the domain of $G$ and let $N$ be a minimal normal subgroup witnessing that $G$ is innately transitive, that is, $N$ is transitive on $\Omega$. Let $\Sigma$ be a  system of imprimitivity for the action of $G$ on $\Omega$ with the property that $G$ acts primitively on $\Sigma$.\footnote{The existence of $\Sigma$ is clear: choose a system of imprimitivity whose blocks have cardinality as large as possible.} Let $K$ be the kernel of the action of $G$ on $\Sigma$ and let $G^\Sigma\cong G/K$ be the permutation group induced by $G$ on $\Sigma$. We denote by $\pi:G\to \mathrm{Sym}(\Sigma)$ the natural homomorphism; by definition, $G^\Sigma$ is the image of $\pi$.

Let $\bar X$ be a semiregular subgroup of $G^\Sigma$ and let $X=\pi^{-1}(\bar{X})$ be the preimage of $\bar{X}$ via $\pi$. By Lemma~\ref{lemma:prel}, $X$ is semiregular on $\Omega$.  As $G$ has no semiregular subgroups of order at least $k$, we get
\begin{equation}\label{orderatmostk1}
|\bar{X}||K|=|X|\le k-1.
\end{equation}

As $|K|\ge 1$,~\eqref{orderatmostk1} shows that the primitive group $G^\Sigma$ has no semiregular subgroups of order at least $1+(k-1)/|K|\le k$. Since we are assuming the veracity of Theorem~\ref{thrm:main2} for primitive permutation groups, we deduce that either
\begin{itemize}
\item $|\Sigma|\le g(k)$, or
\item $G^\Sigma$ is primitive of degree $12^\kappa$  and $G^\Sigma=M_{11}\mathrm{wr}A$, for some  positive integer $\kappa$ and for some transitive subgroup $A$ of $\mathrm{Sym}(\kappa)$.
\end{itemize}
Assume first that $|\Sigma|\le g(k)$. In particular,
$$|G^\Sigma|\le |\mathrm{Sym}(g(k))|\le g(k)!.$$ Since $n\le |G|$,  we deduce
\begin{equation*}
n\le |G|= |G:K||K|=|G^\Sigma||K|\le g(k)!k\le f(k),
\end{equation*}
where we are using~\eqref{orderatmostk1} in the second inequality.

Assume that the second possibility above holds. Our auxiliary Lemma~\ref{lemma:technical} implies that either $G$ in its action on $\Omega$ has a semiregular subgroup of order at least $\min(|K|\cdot 11^\kappa,660^\kappa)\ge 11^\kappa$, or
$\Sigma=\Omega$ and the action of $G=M_{11}\mathrm{wr} A$ on $\Omega$ is the natural primitive product action on $12^\kappa$ points. The second case is impossible in our situation because we are assuming that $G$ is imprimitive on $\Omega$. Moreover, if $k\le 11^\kappa$, then $G$ does have a semiregular subgroup of order at least $k$. Assume then $11^\kappa<k$. Observe that $G^\Sigma=M_{11}\mathrm{wr}A$ has a faithful permutation representation of degree $11^\kappa$ and hence $|G^\Sigma|\le (11^\kappa)!\le k!$. Therefore, $$n=|\Omega|\le |G|=|G^\Sigma||K|\le k!(k-1)\le f(k),$$
where as above we are using~\eqref{orderatmostk1} in the second inequality.

\smallskip
It remains to prove the second part of the statement of Theorem~\ref{thrm:main2} for innately transitive groups. Therefore, let $G$ be innately transitive with no semiregular subgroups having order at least $4$. We use the notation above (with $k=4$). In particular, we may assume that $G$ is not primitive, because we are assuming the veracity of Theorem~\ref{thrm:main2} for primitive groups. Recall that $G^\Sigma$ is primitive and either
\begin{itemize}
\item each semiregular subgroup of $G^\Sigma$ has order at most  $(k-1)/|K|=3/|K|$, see~\eqref{orderatmostk1} with $k=4$, or
\item $G^\Sigma$ has degree $12^\kappa$  and $G^\Sigma=M_{11}\mathrm{wr}A$, for some  positive integer $\kappa$ and for some transitive subgroup $A$ of $\mathrm{Sym}(\kappa)$.
\end{itemize}

Assume that the second possibility above holds. Our auxiliary Lemma~\ref{lemma:technical} implies that either $G$ in its action on $\Omega$ has a semiregular subgroup of order at least $11^\kappa$, or
$\Sigma=\Omega$ and the action of $G=M_{11}\mathrm{wr} A$ on $\Omega$ is the natural primitive product action on $12^\kappa$ points. In the first case we have a semiregular subgroup of order at least $11\ge 4$ and in the second case we obtain that part~\eqref{thrmcase5} holds. This concludes the proof for this case.

Assume now that the first possibility above holds. In particular, as $3/|K|<4$ and as $G^\Sigma$ is primitive, by hypothesis, one of parts~\eqref{thrmcase1}--\eqref{thrmcase5} holds for $G^\Sigma$. Observe that part~\eqref{thrmcase5} is exactly the second possibility, which we have already dealt with; therefore, we may disregard this part from further consideration. Before dealing with each of the remaining four possibilities we make some preliminary observations.

Recall that $|K|\le 3$, because $K$ is semiregular on $\Omega$ and $G$ has no semiregular subgroups of order at least $4$. Assume $|K|\in\{2,3\}$. Then $3/|K|<2$ and  hence $G^\Sigma$ has no non-trivial semiregular subgroups. A direct inspection on parts~\eqref{thrmcase1}--\eqref{thrmcase4} shows that $|\Sigma|=1$ and $G^\Sigma=1$. This is impossible because $\Sigma$ is a non-trivial system of imprimitivity of $\Omega$ and hence $|\Sigma|>1$.

Assume $|K|=1$. In particular, $G\cong G^\Sigma$ as abstract groups. We have constructed with a computer the abstract group $G$ (for each of the cases arising in parts~\eqref{thrmcase1}--\eqref{thrmcase4}), we have determined all the imprimitive innately transitive faithful actions of these groups and we have verified that in each action $G$ admits a semiregular subgroup of order at least $4$.
\end{proof}

In the light of Proposition~\ref{prop:reduction}, for the rest of the paper we may focus only on the class of primitive groups.  In the rest of the section, we reduce the proof of Theorem~\ref{thrm:main2} further, indeed to the case of primitive simple groups.

For six of the eight O'Nan-Scott types, the proof of Theorem~\ref{thrm:main2} is immediate: the socle of $G$ contains a subgroup acting regularly on the domain and hence forming a clique in the derangement graph.
\begin{lemma}\label{basic}
Let $G$ be a primitive group of degree $n$ of type HA, HS, HC, TW, SD or CD. Then $G$ has a semiregular subgroup of order $n$. In particular, Theorem~$\ref{thrm:main2}$ holds true in these cases.
\end{lemma}
\begin{proof}
Let $N$ be the socle of $G$ and let $\Omega$ be the domain of $G$.

When $G$ is of type HA or TW, $N$ acts regularly on $\Omega$. In particular, $N$ is a clique in $\Gamma_G$ of cardinality $|N|=|\Omega|=n$.

When $G$ is of type HS or HC, $N$ is the direct product of two minimal normal subgroups of $G$, say $M_1$ and $M_2$. From the description of the primitive groups of type HS or HC, we see that $M_1$ and $M_2$ act regularly on $\Omega$ and they form a clique in $\Gamma_G$ of cardinality $|M_i|=|\Omega|=n$.

Suppose $G$ is of type SD. Then $N=T_1\times \cdots \times T_{\ell+1}$, where $T_1,\ldots,T_{\ell+1}$ are pairwise isomorphic non-abelian simple groups. From the description of the primitive groups of type SD, we see that $|\Omega|=|T_1|^\ell$ and that $T_1\times\cdots \times T_\ell$ acts regularly on $\Omega$.  Therefore, as above, $T_1\times\cdots \times T_\ell$ forms a clique in $\Gamma_G$ of cardinality $|\Omega|=n$.

Suppose that $G$ is of type CD. Then $\Omega$ admits a non-trivial Cartesian decomposition, that is, $\Omega=\Delta^\kappa$ for some finite set $\Delta$ and for some positive integer $\kappa\ge 2$, and we have an embedding $G\le H\mathrm{wr}\mathrm{Sym}(\kappa)$, where the wreath product $H\mathrm{wr}\mathrm{Sym}(\kappa)$ acts on $\Delta^\kappa$ primitively, $H\le\mathrm{Sym}(\Delta)$ and $H$ is of type SD in its action on $\Delta$. Now, if the socle of $H$ is isomorphic to $T^{\ell+1}$, for some non-abelian simple group $T$ and for some positive integer $\ell\ge 1$, then the socle of $G$ is isomorphic to $T^{\kappa(\ell+1)}$. In particular, the socle of $G$ contains a subgroup isomorphic to $T^{\kappa\ell}$ acting regularly on $\Omega$ and we may argue as above.
\end{proof}
In the light of Lemma~\ref{basic} it is clear that the bulk of the argument for proving Theorem~\ref{thrm:main2} is dealing with primitive groups of AS and PA type. For dealing with these two cases, we require detailed information on non-abelian simple groups. We conclude this section with a reduction to primitive simple groups.

\begin{proposition}\label{prop:reduction2}
Suppose that Theorem~$\ref{thrm:main2}$ holds true for primitive simple groups. Then Theorem~$\ref{thrm:main2}$ holds true.
\end{proposition}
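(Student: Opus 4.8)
The plan is to invoke Lemma~\ref{basic} to dispose of the types HA, HS, HC, TW, SD and CD, leaving only the AS and PA types, and then to reduce these two to the assumed primitive simple case. In both types the socle has the shape $T^\kappa$ for a non-abelian simple group $T$ and some $\kappa\ge 1$, the domain may be identified with $\Delta^\kappa$, and $G\le H\wr\Sym(\kappa)$ acts in product action, where the component $H\le\Sym(\Delta)$ is primitive with socle $T$ and $G$ induces a transitive subgroup $A\le\Sym(\kappa)$ on the $\kappa$ coordinates; the case $\kappa=1$ is exactly the AS type. Thus the only genuinely new point is the passage from the component acting on $\Delta$ to $G$ acting on $\Delta^\kappa$.

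The engine of the reduction is a lift at the level of the socle. If $T$, acting on $\Delta$, has a semiregular subgroup $S$ of order $m$, then $S^\kappa\le T^\kappa=\mathrm{soc}(G)\le G$ is semiregular on $\Delta^\kappa$ of order $m^\kappa$, because in the product action $(\delta_1,\dots,\delta_\kappa)$ is fixed by $(s_1,\dots,s_\kappa)$ precisely when every $s_i$ fixes $\delta_i$. Writing $s$ for the maximal order of a semiregular subgroup of $T$ on $\Delta$, it follows that if $G$ has no semiregular subgroup of order at least $k$ then $s^\kappa\le k-1$. I would now reduce the action of $T$ on $\Delta$ to a primitive one: $T$ is transitive on $\Delta$, and choosing a system of imprimitivity on which $T$ acts primitively gives a faithful primitive action of the simple group $T$ (faithful since $T$ is simple and the action is non-trivial), which is precisely a primitive simple group; by Lemma~\ref{lemma:prel} every semiregular subgroup of this primitive quotient lifts to one of $T$ on $\Delta$.

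Applying the assumed validity of Theorem~\ref{thrm:main2} to this primitive simple group, there are two regimes. If $s\ge 2$, then $s^\kappa\le k-1$ forces $\kappa\le\log_2(k-1)$, while the assumed theorem bounds the degree of the primitive simple group, hence (modulo the block size discussed below) the degree $|\Delta|$, by a function of $k$; combining the two bounds controls $n=|\Delta|^\kappa$ by a function of $k$, as required. The only way $\kappa$ can escape control is $s=1$, that is, the socle has no non-trivial semiregular subgroup on $\Delta$; then, again by Lemma~\ref{lemma:prel}, the primitive simple group attached to $T$ is elusive, and the assumed theorem (ultimately Giudici's result) identifies $M_{11}$ on $12$ points as the only such group. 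This pins down $T=M_{11}$, the $12$-point action on $\Delta$, and hence $G=M_{11}\wr A$ in its product action on $12^\kappa$ points, which is the exceptional family in the statement. For the sharper conclusion with $k=4$ one argues as follows: if $\kappa\ge 2$ and $s\ge 2$ then $G$ already contains a semiregular subgroup of order at least $2^2=4$, so a $k=4$ exception with $\kappa\ge 2$ can arise only from the elusive $M_{11}$ component, giving part~\eqref{thrmcase5}; the residual case $\kappa=1$ is the AS type and inherits parts~\eqref{thrmcase1}--\eqref{thrmcase4} verbatim from the assumed primitive simple case.

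The step I expect to be the main obstacle is bounding the \emph{full} degree $|\Delta|$, and not merely the degree of a primitive quotient of the socle, when $T$ acts imprimitively on $\Delta$: here one must additionally control the block size, or equivalently apply the assumed theorem to the almost simple component $H$ itself rather than to a quotient of its socle, paying only the bounded index $|\Out(T)|$ relating semiregular subgroups of $H$ and of $T$. The second delicate point is the clean isolation of the $M_{11}$-on-$12$ action as the unique elusive primitive simple group, which is exactly what guarantees that $M_{11}\wr A$ is the only new exception produced in passing to the product action; once these are in hand, absorbing $|\Out(T)|$ and the index of the primitive quotient into the final function $f_2$ is routine.
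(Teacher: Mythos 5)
Your route is essentially the paper's: reduce to primitive $G$ (you should invoke Proposition~\ref{prop:reduction} explicitly here, since the statement concerns all innately transitive groups), dispose of six O'Nan--Scott types via Lemma~\ref{basic}, set up the product action $G\le H\wr\Sym(\kappa)$ on $\Delta^\kappa$ with socle $T^\kappa$, lift a semiregular $S\le T$ on $\Delta$ to $S^\kappa$ on $\Delta^\kappa$, pass to a system of imprimitivity $\Sigma$ of $\Delta$ on which the simple group $T$ acts primitively and faithfully, and isolate $M_{11}$ on $12$ points as the source of the exceptional family. The $k=4$ analysis also matches the paper in outline, except that for $\kappa=1$ the case is not inherited ``verbatim'': the hypothesis covers only the simple group $T$, whereas $G$ may be a proper almost simple overgroup, and the paper still needs a machine check of all primitive actions of the almost simple groups with socle $\mathrm{Alt}(5)$, $\mathrm{Alt}(6)$, $M_{11}$ or $\mathrm{PSU}_3(3)$.

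The genuine gap is exactly the one you flag, and neither of your proposed remedies closes it. You cannot ``apply the assumed theorem to the almost simple component $H$'': the hypothesis is only for primitive \emph{simple} groups, $H$ is almost simple, and its socle $T$ need not be primitive on $\Delta$, so there is no primitive simple action of degree $|\Delta|$ available. Nor does $|\Out(T)|$ control the block size: it compares the orders of $H$ and $T$, not the degrees $|\Delta|$ and $|\Sigma|$ of the two actions of $T$. The paper closes the gap with a one-line observation you are missing: since $T$ is simple and $|\Sigma|>1$, $T$ is faithful on $\Sigma$, whence $|\Delta|\le |T|\le |\Sigma|!$; so the bound on $|\Sigma|$ coming from the hypothesis applied to $T$ acting on $\Sigma$ (with parameter roughly $k^{1/\kappa}$ in the paper, or your $s+1$) already bounds $|\Delta|$, and then $n=|\Delta|^\kappa\le (|\Sigma|!)^\kappa$ with $\kappa$ bounded. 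A further small imprecision: the first part of the assumed theorem does not by itself identify $M_{11}$ on $12$ points as the \emph{only} elusive primitive simple group, since it leaves room for elusive groups of degree at most $f_2(2)$; you need the $k=4$ clause, i.e.\ an inspection of parts~\eqref{thrmcase1}--\eqref{thrmcase5}, to exclude those, which is what the paper does.
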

\begin{proof}
In view of Proposition~\ref{prop:reduction}, we may suppose that $G$ is primitive. Moreover, in view of Lemma~\ref{basic}, we may suppose that $G$ is of AS or PA type.

Let $\Omega$ be the domain of $G$. Then $\Omega$ admits a Cartesian decomposition $\Delta^\kappa$, for some $\kappa\ge 1$\footnote{When $\kappa=1$, $\Omega=\Delta$ and $G$ is of type AS, when $k\ge 2$, $G$ is of type PA.}  and $G$ embeds into the wreath product $H\mathrm{wr}\mathrm{Sym}(\kappa)$ endowed with the primitive product action. Replacing $\mathrm{Sym}(\kappa)$ by a suitable transitive subgroup $A$, we may suppose that $G$ embeds into the wreath product $H\mathrm{wr}A$ and $G$ projects surjectively to $A$. Moreover, $H$ is of type AS. Let $T$ be the socle of $H$. Then the socle of $G$ is $T^\kappa$. When $T=M_{11}$ and $|\Delta|=12$, as we have mentioned in the introduction, Giudici~\cite{Giudici} has shown that $G$ has no non-identity semiregular element and hence, for the rest of the argument, we may suppose that $T$ is not $M_{11}$ in its degree $12$ action.

Observe that $T$ acts transitively on $\Delta$ because $H$ is primitive on $\Delta$, but $T$ is not necessarily primitive on $\Delta$. Let $\Sigma$ be a non-trivial system of imprimitivity for the action of $T$ on $\Delta$; by choosing the blocks of $\Sigma$ as large as possible, we may assume that $T$ acts primitively on $\Sigma$.

We now prove the first part of the statement of Theorem~\ref{thrm:main2}. Let $g:\mathbb{N}\to\mathbb{N}$ be a function witnessing that Theorem~\ref{thrm:main2} holds for primitive simple groups. Without loss of generality we may suppose that $g(1)=1$. Define $f:\mathbb{N}\to\mathbb{N}$ by $$f(k)=\max\{g(\lfloor k^{1/\ell} \rfloor)!^\ell\mid \ell\in \mathbb{N}\}.$$
Observe that $f$ is well-defined because when $\ell\ge k$, we have $\lfloor k^{1/\ell}\rfloor=1$ and hence $g(\lfloor k^{1/\ell}\rfloor)!^\ell=1$. In particular,
$$f(k)=\max\{g(\lfloor k^{1/\ell} \rfloor)!^\ell\mid \ell\in \{1,\ldots,k\}\}.$$

 Let $k\in\mathbb{N}$. By hypothesis, either $T$ in its action on $\Sigma$ has a semiregular subgroup $X$ of order at least $k^{1/\kappa}$, or $|\Sigma|\le g(\lfloor k^{1/\kappa}\rfloor)$. Observe that $X$ is also semiregular for the action of $T$ on $\Delta$. Therefore, in the first case, $X^\kappa$ is a semiregular subgroup of $G$ of order at least $(k^{1/\kappa})^\kappa=k$. Assume then $|\Sigma|\le g(\lfloor k^{1/\kappa}\rfloor)$. Thus $|\Delta|\le |T|\le |\Sigma|!\le g(\lfloor k^{1/\kappa}\rfloor)!$ and $|\Omega|=|\Delta|^\kappa\le g(\lfloor k^{1/\kappa}\rfloor)!^\kappa\le f(k)$.

We now prove the second part of the statement of Theorem~\ref{thrm:main2}. Therefore, we suppose that $G$ has no semiregular subgroup of order at least $4$. We use the notation established above. Assume first $\kappa=1$, that is, $G$ is almost simple. Since we are assuming that Theorem~\ref{thrm:main2} holds for simple primitive groups, we deduce that $T$ is isomorphic to $\mathrm{Alt}(5)$, $\mathrm{Alt}(6)$, $M_{11}$ or $\mathrm{PSU}_3(3)$. We have
constructed with a computer the abstract group $G$ having socle $T$, we have determined all the primitive
actions of these groups and we have verified the veracity of Theorem~\ref{thrm:main2}. Assume next $\kappa\ge 2$. If $T$ has a semiregular subgroup of order at least $2$, then $T^\kappa\le G$ has a semiregular subgroup of order at least $4$. An inspection on the cases arising in parts~\eqref{thrmcase2}--\eqref{thrmcase5}, we see that the only group not having a semiregular subgroup of order at least $2$ is $T=M_{11}$ in its primitive action of degree $12$, which we have already dealt with above.
\end{proof}

In view of Proposition~\ref{prop:reduction2}, for the proof of Theorem~\ref{thrm:main2} we may suppose that $G$ is simple and primitive.

\section{Number theoretic results}\label{sec:number theory}
We collect in this section some number theoretic results. Remarkably the proof of Theorem~\ref{thrm:main2} relies on some deep number theoretic facts, most notably, a quantitative weak version of the $abc$ conjecture due to Stewart and Tijdeman~\cite{abc} for dealing with alternating groups, and an impressive theorem of Siegel~\cite{Siegel} on the greatest prime factors of polynomials valuated at integers for dealing with simple groups of Lie type\footnote{We would like to thank the
pseudonymous user “so-called friend Don” who directed us to~\cite{Siegel} in response to a question we posed on MathOverflow.}.

Given a prime number $p$ and a non-negative integer $x$, we let $x_p$ denote the remainder of $x$ in the division by $p$.
We need the famous theorem of Sylvester on prime numbers\footnote{We thank Marina Cazzola for pointing out the relevance of~\cite{eps} in our work.}, see for instance~\cite{Hall}.
\begin{theorem}\label{lemma:sylvester}Let $\ell$ be a positive integer. Then the product of $\ell$ consecutive integers greater than $\ell$ is divisible by a prime $p$ greater than $\ell$.
\end{theorem}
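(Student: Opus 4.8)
The plan is to restate the claim as an assertion about a binomial coefficient and then run the classical Sylvester--Erd\H{o}s counting argument. Write the $\ell$ consecutive integers as $m, m+1, \dots, m+\ell-1$ with $m \ge \ell+1$, and set $N = m+\ell-1$, so that $N \ge 2\ell$. Their product equals $N!/(N-\ell)! = \ell!\binom{N}{\ell}$, since $N-\ell = m-1$. Because no prime exceeding $\ell$ divides $\ell!$, a prime $p > \ell$ divides the product if and only if it divides $\binom{N}{\ell}$. Thus it suffices to prove the following: for $N \ge 2\ell$, the binomial coefficient $\binom{N}{\ell}$ has a prime factor greater than $\ell$.

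I would argue by contradiction, assuming every prime factor of $\binom{N}{\ell}$ is at most $\ell$. The two inputs I would isolate first are: (i) for every prime $p$, the exponent $a_p=v_p\binom{N}{\ell}$ satisfies $p^{a_p}\le N$; and (ii) if $p>\sqrt N$ then $a_p\le 1$. Both follow from Legendre's formula: writing $a_p=\sum_{j\ge 1}(\lfloor N/p^j\rfloor-\lfloor \ell/p^j\rfloor-\lfloor (N-\ell)/p^j\rfloor)$, each summand lies in $\{0,1\}$ and vanishes as soon as $p^j>N$. I would also record the elementary lower bound $\binom{N}{\ell}\ge (N/\ell)^\ell$, the monotonicity $\binom{N}{\ell}\ge\binom{2\ell}{\ell}$, and the Chebyshev estimate $\prod_{p\le x}p<4^{x}$, together with the fact that $\pi(\ell)<\ell/2$ for all but finitely many $\ell$ (here $\pi$ denotes the prime-counting function).

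The argument then splits on the size of $N$. When $N\ge \ell^2$, every prime factor is $\le \ell\le\sqrt N$, so (i) gives $\binom{N}{\ell}\le N^{\pi(\ell)}$; since $N/\ell\ge\sqrt N$ forces $(N/\ell)^\ell\ge N^{\ell/2}$, I obtain $N^{\ell/2}\le\binom{N}{\ell}\le N^{\pi(\ell)}<N^{\ell/2}$, a contradiction, after disposing of the finitely many $\ell$ with $\pi(\ell)\ge\ell/2$ by direct inspection. The delicate regime is $2\ell\le N<\ell^2$, where $\sqrt N<\ell$ and one must separate the primes $p\le\sqrt N$ (each contributing a factor $\le N$ by (i)) from the primes $\sqrt N<p\le\ell$ (each contributing $\le p$ by (ii)). Here the crude estimate $\prod_{\sqrt N<p\le\ell}p<4^{\ell}$ is too lossy, since it counts every prime up to $\ell$ rather than only those that genuinely occur in $\binom{N}{\ell}$; this residual range, together with the finitely many small values of $\ell$, is the main obstacle. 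Closing it requires Erd\H{o}s's sharper bookkeeping of the exponents of primes in $(\sqrt N,\ell]$ and a finite verification, and this is precisely the point at which I would appeal to the classical treatment in~\cite{Hall} rather than reproduce the full computation.
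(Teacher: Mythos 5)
The paper does not prove this statement at all: it quotes it as Sylvester's classical theorem and refers the reader to Hanson~\cite{Hall}. So there is no in-paper argument to compare against, and your proposal should be judged as a standalone proof attempt. Your reduction is correct and is the standard one: writing the product as $\ell!\binom{N}{\ell}$ with $N\ge 2\ell$, the claim becomes that $\binom{N}{\ell}$ has a prime factor exceeding $\ell$, and your two Legendre-formula facts (each summand $\lfloor N/p^j\rfloor-\lfloor \ell/p^j\rfloor-\lfloor (N-\ell)/p^j\rfloor$ is $0$ or $1$ and vanishes once $p^j>N$, hence $p^{a_p}\le N$, and $a_p\le 1$ for $p>\sqrt N$) are both right. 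The case $N\ge\ell^2$ is handled cleanly: $N^{\ell/2}\le(N/\ell)^\ell\le\binom{N}{\ell}\le N^{\pi(\ell)}$ contradicts $\pi(\ell)<\ell/2$, which holds for all $\ell\ge 9$, and the smaller $\ell$ can indeed be checked directly.

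The issue is that the regime $2\ell\le N<\ell^2$, which you explicitly leave open and hand back to the classical literature, is not a residual technicality --- it is where essentially all of the content of the Sylvester--Schur theorem lives (for instance, Bertrand's postulate is exactly the case $N=2\ell$ of the statement, and it sits squarely inside this range). You correctly diagnose why the naive bound $\prod_{\sqrt N<p\le\ell}p<4^\ell$ is too lossy and that one needs Erd\H{o}s's finer accounting of which primes in $(\sqrt N,\ell]$ actually divide $\binom{N}{\ell}$, but you do not carry that accounting out, so as a self-contained proof the proposal has a genuine gap precisely at the hard case. That said, your end position --- cite \cite{Hall} for the full argument --- is exactly the position the paper itself takes, so your write-up is no less complete than the source; it is simply an annotated citation rather than a proof. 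If you want to close the gap yourself, the missing step is the estimate $\binom{N}{\ell}\le N^{\pi(\sqrt N)}\cdot\prod_{\sqrt N<p\le\ell,\ p\mid\binom{N}{\ell}}p$ combined with the observation that a prime $p$ in $(\sqrt N,\ell]$ divides $\binom{N}{\ell}$ only if some $\lfloor N/p\rfloor-\lfloor\ell/p\rfloor-\lfloor(N-\ell)/p\rfloor$ equals $1$, which fails for most such primes when $N$ is close to $2\ell$; balancing this against $\binom{N}{\ell}\ge(N/\ell)^\ell$ and a finite check is Erd\H{o}s's argument.
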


Let $q$ and $t$ be positive integers. Recall that a \textit{\textbf{primitive prime divisor}} for the pair $(q,t)$ is a prime $p$ such that $p\mid q^t-1$ and $p\nmid q^i-1$, for all $1\le i<t$. Zsigmondy's theorem~\cite{zi} shows that $q^t-1$ admits a primitive prime divisor, except when $t=2$ and $q=2^m-1$ is a Mersenne number, or when $(t,q)=(6,2)$.

\begin{lemma}\label{number theory}
Let $m$ be a positive integer with $m\ge 5$ and let $\ell\in \{1,\ldots,m-1\}$. Suppose that, for every prime $p\ge 5$, $\ell_p\le m_p$. Then either
\begin{enumerate}
\item\label{case1} $\ell\in \{1,m-1\}$ and $m=2^a\cdot 3^b$, for some $a,b\in\mathbb{N}$, or
\item\label{case2}$m=9$ and $\ell\in \{2,7\}$.
\end{enumerate}
\end{lemma}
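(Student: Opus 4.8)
The plan is to first exploit a symmetry $\ell \leftrightarrow m-\ell$, then invoke Sylvester's theorem (Theorem~\ref{lemma:sylvester}) to force the smaller of $\ell, m-\ell$ to be very small, and finally to analyze by hand the handful of surviving cases, which reduce to the classification of consecutive $\{2,3\}$-smooth integers.

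First I would record the \emph{symmetry}: if $\ell$ satisfies the hypothesis, then so does $m-\ell$. Indeed, fix a prime $p\ge 5$; since $0\le \ell_p\le m_p\le p-1$, we have $m-\ell\equiv m_p-\ell_p\pmod p$ with $0\le m_p-\ell_p\le p-1$, so $(m-\ell)_p=m_p-\ell_p\le m_p$. As $m-\ell\in\{1,\ldots,m-1\}$ and both conclusions of the lemma are invariant under $\ell\mapsto m-\ell$, I may assume without loss of generality that $\ell\le m/2$, i.e.\ $m\ge 2\ell$.

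The key reduction uses Sylvester. Suppose for contradiction that $\ell\ge 4$. Since $m\ge 2\ell$, the $\ell$ consecutive integers $m-\ell+1,\ldots,m$ are each strictly greater than $\ell$, so by Theorem~\ref{lemma:sylvester} some prime $p>\ell\ge 4$ (hence $p\ge 5$) divides one of them, say $p\mid(m-\ell+j)$ with $j\in\{1,\ldots,\ell\}$. Then $m\equiv \ell-j\pmod p$ with $0\le \ell-j\le \ell-1<\ell<p$, so $m_p=\ell-j\le \ell-1$, whereas $\ell_p=\ell$. This contradicts the hypothesis $\ell_p\le m_p$, and therefore $\ell\le 3$.

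It then remains to treat $\ell\in\{1,2,3\}$. For such $\ell$ one has $\ell_p=\ell$ for every prime $p\ge 5$, so the hypothesis says exactly that $m\not\equiv 0,1,\ldots,\ell-1\pmod p$ for all $p\ge 5$; equivalently, each of $m,m-1,\ldots,m-\ell+1$ has no prime factor $\ge 5$, i.e.\ is of the form $2^a3^b$. For $\ell=1$ this is precisely $m=2^a3^b$, giving conclusion~\eqref{case1}. For $\ell=2$ I would classify the consecutive pairs $(m-1,m)$ of $\{2,3\}$-smooth integers: since $m\ge 5$ the odd member is a genuine power $3^s$ of $3$ and, by coprimality, the even member is a power $2^t$ of $2$, reducing to $3^s-1=2^t$ or $2^t-1=3^s$; a short argument modulo $8$ (together with the factorization $3^{2r}-1=(3^r-1)(3^r+1)$) shows the only solution with $m\ge 5$ is $m=9$, giving conclusion~\eqref{case2}. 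For $\ell=3$ the pair $(m-1,m)$ already forces $m=9$, but then $m-2=7$ is not $\{2,3\}$-smooth, so this case is vacuous. Finally, undoing the symmetry $\ell\mapsto m-\ell$ converts $\ell=1$ into $\ell=m-1$ (still with $m=2^a3^b$) and $\ell=2$ into $\ell=m-2=7$ (with $m=9$), which together give exactly cases~\eqref{case1} and~\eqref{case2}. I expect the main obstacle to be the classification of consecutive $\{2,3\}$-smooth integers, i.e.\ solving the Catalan-type equations $3^s-1=2^t$ and $2^t-1=3^s$; this is elementary but requires the careful congruence bookkeeping sketched above.
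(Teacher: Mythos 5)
Your proof is correct and follows essentially the same route as the paper's: reduce to $\ell\le m/2$ via the symmetry $\ell\mapsto m-\ell$, apply Sylvester's theorem to the $\ell$ consecutive integers $m-\ell+1,\ldots,m$ to force $\ell$ to be tiny, and then classify consecutive $\{2,3\}$-smooth integers. The only (immaterial) differences are that the paper extracts $\ell\le 2$ directly from Sylvester by noting the offending prime must lie in $\{2,3\}$ and exceed $\ell$, whereas you get $\ell\le 3$ and dispose of $\ell=3$ separately, and the paper resolves the equations $2^a-1=3^{b'}$ and $3^b-1=2^{a'}$ by Zsigmondy's theorem rather than your congruences modulo $8$.
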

\begin{proof}
Suppose that $m$ and $\ell$ satisfy the property:
$$(\dag)\quad \ell_p\le m_p, \hbox{ for every prime }p\ge 5.$$ Now, consider $\ell'=m-\ell$ and let $p\ge 5$ be a prime number. By hypothesis $\ell_p\le m_p$ and hence $m_p-\ell_p$ is the remainder of $\ell'=m-\ell$ in the division by $p$, that is, $\ell'_p=(m-\ell)_p=m_p-\ell_p\le m_p$. This shows that, if  the pair $(m,\ell)$ satisfies $(\dag)$, then so does $(m,\ell')=(m,m-\ell)$. Therefore, without loss of generality, replacing $\ell$ by $m-\ell$ if necessary, we may suppose that $\ell\le m/2$.

 Now consider the $\ell$ consecutive numbers
$$m,m-1,\ldots,m-\ell+1.$$
As $m\ge 2\ell$, these numbers are greater than $\ell$ and hence, by Sylvester's theorem, there exists a prime
\begin{equation}\label{prime p}p>\ell\end{equation} dividing $m-i$, for some $i\in \{0,\ldots,\ell-1\}$. As $i\le \ell-1$ and $p\mid m-i$, we have $m_p\le i\le \ell-1$. However, as $p>\ell$, we have $\ell_p=\ell$ and hence we deduce $m_p<\ell_p$. Since $(m,\ell)$ satisfies $(\dag)$, we have $p<5$, that is, $p\in \{2,3\}$.

Assume $\ell=1$. If $m$ is divisible by a prime $p\ge 5$, then $\ell_p=1>m_p=0$ and hence $(m,\ell)$ does not satisfy $(\dag)$. Therefore $m=2^a\cdot 3^b$, for some $a,b\in\mathbb{N}$, and we obtain part~\eqref{case1}.

Assume $\ell\ge 2$. From~\eqref{prime p}, we have $p>\ell\ge 2$. As $p\in \{2,3\}$, we deduce $p=3$ and, more importantly, $\ell=2$. For each prime divisor $r\ge 5$ of $m-1$ or $m$, we have $m_r\le 1$ and hence $\ell_r\le m_r\le 1$, because $(m,\ell)$ satisfies $(\dag)$. As $\ell=2$, the condition $\ell_r\le 1$ can only be satisfied if and only if $m-1$ and $m$ are only divisible by the primes $2$ and $3$. Thus $$m=2^a\cdot 3^b\hbox{ and }m-1=2^{a'}\cdot 3^{b'},$$ for some $a,a',b,b'\in\mathbb{N}$. Since $$1=\gcd(m,m-1)=\gcd(2^a\cdot 3^a,2^{a'}\cdot 3^{b'})=2^{\min(a,a')}\cdot 3^{\min(b,b')},$$ we obtain
\begin{itemize}
\item $m=2^a$ and $m-1=3^{b'}$, or
\item $m=3^b$ and $m-1=2^{a'}$.
\end{itemize}
We deal with each of these two cases in turn. Assume $m=2^{a}$ and $m-1=3^{b'}$. As $m\ge 5$, $3$ divides $m-1=2^a-1$ and hence  $a$ is even. Thus $a=2\alpha$ for some integer $\alpha$. This gives $2^{a}-1=4^{\alpha}-1=3^{b'}$. In particular, since $3$ divides $4^1-1$,  $4^{\alpha}-1$ has no primitive prime divisors. Using the theorem of Zsigmondy~\cite{zi}, we deduce that this case is impossible unless $m=4$. However, this contradicts our hypothesis $m\ge5$.  Assume $m=3^{b}$ and $m-1=2^{a'}$. Thus $3^{b}-1=2^{a'}$. In particular, since $2$ divides $3^1-1$,  $3^{b}-1$ has no primitive prime divisors. Using the theorem of Zsigmondy~\cite{zi}, we deduce that $b\in \{1,2\}$. When $b=1$, $m=3$ and we contradict our hypothesis $m\ge5$. When $b=2$, $m=9$ and we obtain the exceptional case in~\eqref{case2}.
\end{proof}

\begin{lemma}\label{lemma:number theory2}Let $m$ be a positive integer. If $m\ge 8$, then there exists a prime $p$ with $m/2<p\le m-3$.
\end{lemma}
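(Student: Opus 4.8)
The statement is a variant of Bertrand's postulate, and the plan is to show that the half-open interval $(m/2,\,m-3]$ contains a prime. The only genuine difficulty is the upper endpoint: since $m-3$ is only slightly smaller than $m$, the bare form of Bertrand's postulate (which merely produces a prime somewhere in $(m/2,m)$) does not by itself suffice, because the prime it supplies could a priori be one of $m-2,m-1,m$. Thus a short-interval prime result is what is really needed here; note also that the previously stated theorem of Sylvester is too weak on its own, since forcing the prime factor it produces to be \emph{both} larger than $m/2$ and equal to one of the integers in the product leads to an empty window for the relevant parameter.

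For the generic range I would invoke an explicit theorem guaranteeing a prime in a short interval just above $x$; concretely, Nagura's theorem provides, for every $x\ge 25$, a prime $p$ with $x<p<\tfrac{6}{5}x$. Applying this with $x=\lfloor m/2\rfloor$ (legitimate as soon as $m\ge 50$, since then $\lfloor m/2\rfloor\ge 25$) yields a prime $p$ with $m/2<p<\tfrac{6}{5}\cdot\tfrac{m}{2}=\tfrac{3}{5}m$. A one-line computation shows $\tfrac{3}{5}m\le m-3$ precisely when $m\ge 8$, so this $p$ satisfies $m/2<p\le m-3$, as required. Hence the lemma holds for all $m\ge 50$.

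It then remains to treat the finitely many values $8\le m\le 49$, which I would dispatch by direct inspection, exhibiting in each case an explicit prime in the relevant interval (for instance $p=5$ works for $m\in\{8,9\}$, $p=7$ for $10\le m\le 13$, $p=11$ for $14\le m\le 16$, and so on). The step I expect to be most delicate is not the large-$m$ range but the bookkeeping around the upper endpoint for small $m$: because at most one of $m-2,m-1,m$ can be prime when $m$ is even, whereas two of them (a twin pair $m-2,m$) can be prime when $m$ is odd, a crude count of the primes in $(m/2,m]$ does not immediately place one of them at or below $m-3$, and it is exactly these near-the-top configurations that the finite verification must settle by hand. An alternative to Nagura that avoids short-interval machinery is to use that there are at least three primes in $(m/2,m]$ once $m$ is large enough (via Ramanujan primes, with threshold $m\ge 17$): since at most two of $m-2,m-1,m$ are prime, at least one prime must then lie in $(m/2,\,m-3]$, reducing the explicit check to $8\le m\le 16$.
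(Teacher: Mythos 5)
Your proof is correct, but it takes a heavier route than the paper does. You are right that the weakest form of Bertrand's postulate (a prime in $(n,2n)$) is insufficient here, since the prime it produces could be one of $m-2,m-1,m$; but the classical refinement actually stated in Hardy--Wright, namely that for $n\ge 4$ there is a prime $p$ with $n<p<2n-2$, already closes this gap, and that is exactly what the paper uses. Taking $n=m/2$ when $m$ is even gives $m/2<p<m-2$, hence $p\le m-3$; taking $n=(m-1)/2$ when $m$ is odd gives $p>(m-1)/2$, hence $p\ge (m+1)/2>m/2$, and $p<m-3$. This covers all $m\ge 8$ uniformly, with no finite verification. Your argument via Nagura's theorem (a prime in $(x,\tfrac65 x)$ for $x\ge 25$) is valid and in fact produces a prime in the much shorter interval $(m/2,\tfrac35 m)$, but at the cost of invoking a stronger short-interval result and of an explicit check for $8\le m\le 49$; your alternative via three primes in $(m/2,m]$ (Ramanujan primes) is likewise correct and reduces the finite check to $8\le m\le 16$. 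In short: what you prove is what is needed, but the paper's two-line application of the $n<p<2n-2$ form of Bertrand's postulate renders both the short-interval machinery and the case-by-case verification unnecessary.
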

\begin{proof}
Bertrand's postulate~\cite[page 498]{Hardy} says that, when $n\ge 4$, there is a prime $p$ satisfying $n<p<2n-2$.

In particular, when  $m$ even, the proof follows by applying Bertrand's postulate with $n=m/2$. Whereas, when $m$ is odd, the proof follows by applying Bertrand's postulate with $n=(m-1)/2$.
\end{proof}

\begin{lemma}\label{lemma:number theory3}Let $m$ be a positive integer. Then $(m/2)^m\ge  m!/2$.
\end{lemma}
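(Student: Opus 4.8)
The plan is to argue by induction on $m$, reducing the inductive step to the elementary inequality $(1+1/m)^m\ge 2$. The factor of $2$ appearing on the right-hand side of the lemma is precisely what the induction consumes, so I expect the argument to be short and tight, with equality at $m=1$ and $m=2$.

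For the base case $m=1$ one checks directly that $(1/2)^1=1/2=1!/2$, so equality holds. For the inductive step, suppose $(m/2)^m\ge m!/2$. Then
$$\frac{(m+1)!}{2}=(m+1)\cdot\frac{m!}{2}\le (m+1)\left(\frac{m}{2}\right)^m,$$
so it suffices to establish
$$\left(\frac{m+1}{2}\right)^{m+1}\ge (m+1)\left(\frac{m}{2}\right)^m.$$
Dividing both sides by $(m+1)(m/2)^m$ and simplifying the powers of $2$, this is seen to be equivalent to $\tfrac12\left(1+\tfrac1m\right)^m\ge 1$, that is, to $(1+1/m)^m\ge 2$.

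The only point that genuinely needs justification is thus the bound $(1+1/m)^m\ge 2$ for every positive integer $m$, which I would obtain from Bernoulli's inequality $(1+x)^m\ge 1+mx$ (valid for $x\ge -1$ and $m\ge 1$) applied with $x=1/m$: this gives $(1+1/m)^m\ge 1+m\cdot\tfrac1m=2$. (Alternatively, retaining only the first two terms of the binomial expansion yields the same estimate.) There is no real obstacle here; the essential observation is merely that the inductive step costs exactly the factor $(1+1/m)^m$, and Bernoulli's inequality shows this is always at least $2$. One may also package the same computation multiplicatively, noting that $a_m:=m^m/m!$ satisfies $a_{m+1}/a_m=(1+1/m)^m\ge 2$ with $a_1=1$, whence $a_m\ge 2^{m-1}$, i.e.\ $(m/2)^m=m^m/2^m\ge m!/2$.
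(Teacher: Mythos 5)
Your proof is correct and follows the same route as the paper, which simply states that the lemma ``follows from an inductive argument on $m$'' without giving details; your write-up supplies exactly the missing inductive step, correctly reducing it to $(1+1/m)^m\ge 2$ via Bernoulli's inequality.
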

\begin{proof}
This follows from an inductive argument on $m$.
\end{proof}

The \textit{\textbf{radical}} $\mathrm{rad}(m)$ of a positive integer $m$ is  the product of the distinct prime numbers dividing $m$, that is,
\[\mathrm{rad}(m)=\prod_{\substack{p\mid m\\p\textrm{ prime}}}p.  \]
 For instance, $\mathrm{rad}(24)=2\cdot 3=6$. In our work, we need the following weak form of the abc conjecture, see~\cite{abc}.
\begin{theorem}\label{abctheorem}
There exists a positive constant $\kappa$ such that, if $a$, $b$ and $c$ are coprime positive integers with $c=a+b$, then $c\le \exp(\kappa\cdot \mathrm{rad}(abc)^{15})$.
\end{theorem}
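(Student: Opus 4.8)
The plan is to treat Theorem~\ref{abctheorem} as a consequence of Baker's theory of linear forms in logarithms, following the argument of Stewart and Tijdeman; there is no elementary route, so the real work lies in invoking the right quantitative estimate and optimizing it. Write $R=\mathrm{rad}(abc)$ and let $p_1<\cdots<p_s$ be the primes dividing $abc$. Since each $p_i$ divides $R$, we have $p_i\le R$ and $s\le R$, and $a$, $b$, $c$ are products of powers of the $p_i$ alone. Because $\log c=\sum_{p\mid c}v_p(c)\log p\le(\log R)\sum_{p\mid c}v_p(c)$, it suffices to bound each exponent $v_p(c)$ from above; the factorizations of $a$ and $b$ are handled symmetrically.

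First I would carry out the $p$-adic step. Fix a prime $p\mid c$; since $\gcd(a,b)=1$ we have $p\nmid ab$, and the relation $a+b=c$ gives $a/b\equiv-1\pmod{p^{v_p(c)}}$ in $\mathbb{Z}_p$, whence $v_p(c)=v_p\!\left(\tfrac ab+1\right)$. Writing $a/b=\prod_i p_i^{\,v_{p_i}(a)-v_{p_i}(b)}$ exhibits $\tfrac ab+1$, after absorbing the sign as a root of unity, as the value of a $p$-adic linear form in the logarithms of the $p_i$, with integer coefficients $v_{p_i}(a)-v_{p_i}(b)$ of absolute value at most $B:=\log_2 c$. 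Yu's lower bound for $p$-adic linear forms in logarithms then bounds $v_p(c)$ from above by a quantity of the shape $C(s,p)\,\bigl(\prod_i\log p_i\bigr)\log B$, where $C(s,p)$ depends explicitly on $s$ and on $p\le R$.

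The crucial feature is that the coefficient size enters only through $\log B=\log\log_2 c$, so the dependence on $c$ on the right-hand side is extremely mild. Summing over the primes $p\mid c$ and substituting into $\log c=\sum_{p\mid c}v_p(c)\log p$ yields a self-referential inequality $\log c\le P(R)\,\log\log c$, in which $P(R)$ is a polynomial in $R$ whose degree is dictated by the explicit $s$- and $p$-dependence of the Baker--Yu constants. Such an inequality forces $\log c\le\kappa\,P(R)$, and balancing the $p$-adic estimate against the archimedean one, obtained by applying Baker's ordinary theorem to the real linear form $\log(a/c)$, is exactly what permits the choice $P(R)=R^{15}$, giving $\log c\le\kappa R^{15}$, that is, $c\le\exp(\kappa\,\mathrm{rad}(abc)^{15})$.

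The hard part is entirely quantitative. Everything rests on the sharpest available lower bounds for linear forms in logarithms --- in particular the precise dependence of Yu's $p$-adic estimate on the number $s$ of logarithms, on the prime $p$, and on the heights $p_i\le R$ --- and it is this input, rather than any manipulation of the equation $a+b=c$, that produces the specific exponent $15$. I would import those bounds as black boxes and concentrate the effort on the bookkeeping of the optimization; shaving the exponent down to $15$ without slack is the delicate point, and it is precisely there that the depth of the theorem resides.
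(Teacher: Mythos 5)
The paper does not prove this statement at all: it is imported verbatim from Stewart and Tijdeman~\cite{abc} as a known weak form of the $abc$ conjecture, so there is no internal proof to compare yours against. Your sketch does faithfully reproduce the architecture of the argument in that reference --- the reduction $v_p(c)=v_p(a/b+1)$ for $p\mid c$ using $\gcd(a,b)=1$, the interpretation of $a/b+1$ as a $p$-adic linear form in the logarithms of the primes dividing $abc$ with coefficients bounded by $\log_2 c$, the observation that the height of the coefficients enters only through $\log\log c$, and the resulting self-improving inequality $\log c\le P(R)\log\log c$. Those are the correct load-bearing ideas. (One historical quibble: the 1986 paper relies on van der Poorten's $p$-adic estimates rather than Yu's, which came later and underlie the stronger Stewart--Yu bounds; this does not affect the structure.)

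The genuine gap is that, as written, your text is a proof strategy rather than a proof: every quantitative ingredient is declared a black box, and the exponent $15$ --- which is the entire content of the theorem, since the qualitative statement ``$c$ is bounded in terms of $\mathrm{rad}(abc)$'' is exactly what must be made effective --- is never derived. The sentence asserting that ``balancing the $p$-adic estimate against the archimedean one is exactly what permits the choice $P(R)=R^{15}$'' is the conclusion restated, not an argument for it; to close it you would need to write down the explicit dependence of the chosen Baker-type bound on $s$, on $p\le R$, and on $\prod_i\log p_i$, sum over $p\mid c$, and verify that the resulting polynomial in $R$ (after absorbing the $\log\log c$ via the self-referential step, which itself costs an extra logarithmic factor that must be absorbed into the exponent) has degree at most $15$. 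Since this theorem is used in the paper purely as an external input, the pragmatic fix is the one the authors adopt: cite~\cite{abc} and do not attempt a proof.
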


	 Following~\cite{Siegel}, given a positive integer $n$, we denote by $P[n]$ the \textit{\textbf{greatest prime factor}} of the integer $n$. As customary, we denote with $\Phi_n(x)\in\mathbb{Z}[x]$ the $n^{\mathrm{th}}$ cyclotomic polynomial, that is,
$$\Phi_n(x)=\prod_{\substack{\zeta \textrm{ primitive }n^{\mathrm{th}}\\ \textrm{root of unity}}}x-\zeta.$$	
\begin{lemma}\label{siegeltheorem}
Given $n\in\mathbb{N}$ with $n\ge 3$ and $q\in\mathbb{N}$ with $q\ge 2$, there exist two positive constants $c$ and $c'$ depending on $n$ only such that $q^n-1$ admits a primitive prime divisor $p\ge c\log \log q$ for every $q\ge c'$.
\end{lemma}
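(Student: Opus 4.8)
The plan is to deduce the statement from Siegel's theorem~\cite{Siegel} applied to the cyclotomic polynomial $\Phi_n$, combined with the elementary description of the prime divisors of $\Phi_n(q)$. First I would recall the standard fact underlying the proof of Zsigmondy's theorem~\cite{zi}: if a prime $p$ divides $\Phi_n(q)$, then either the multiplicative order of $q$ modulo $p$ equals $n$ --- in which case $p$ is a primitive prime divisor of $q^n-1$ --- or else $p\mid n$. Equivalently, every prime factor of $\Phi_n(q)$ that does not divide $n$ is automatically a primitive prime divisor of $q^n-1$. Since $n$ is fixed, the set of ``bad'' primes (those dividing $n$) is finite and bounded above by $P[n]$, a quantity depending only on $n$.

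Next I would invoke Siegel's theorem on greatest prime factors. As $n\ge 3$, the polynomial $f(x)=\Phi_n(x)$ has degree $\phi(n)\ge 2$, and its $\phi(n)$ roots, namely the primitive $n^{\mathrm{th}}$ roots of unity, are pairwise distinct. Hence $f$ satisfies the hypotheses of Siegel's theorem, which yields a constant $c_1>0$ depending only on the degree and coefficients of $f$, and therefore only on $n$, together with a threshold $c_1'$, such that $P[\Phi_n(q)]\ge c_1\log\log q$ for every $q\ge c_1'$.

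Finally I would combine the two ingredients. Set $c=c_1$ and let $P=P[n]$ be the largest prime divisor of the fixed integer $n$. Since $\log\log q\to\infty$ as $q\to\infty$, I can choose $c'\ge c_1'$, depending only on $n$, so large that $c_1\log\log q>P$ whenever $q\ge c'$. For such $q$, the prime $p:=P[\Phi_n(q)]$ satisfies $p\ge c\log\log q>P$, so in particular $p\nmid n$; by the structural fact of the first step, $p$ is then a primitive prime divisor of $q^n-1$. This delivers precisely a primitive prime divisor $p\ge c\log\log q$ with $c$ and $c'$ depending only on $n$, as required.

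I expect the only genuine difficulty to be the correct and precise invocation of Siegel's theorem in the quantitative form $P[f(q)]\ge c\log\log q$ for a polynomial $f$ with at least two distinct roots, together with the verification that the resulting constant depends on $f$, and hence on $n$, alone. The remaining steps are routine: the passage between the prime factors of $\Phi_n(q)$ and the primitive prime divisors of $q^n-1$, and the elementary observation that the finitely many prime divisors of the fixed integer $n$ are eventually dominated by $c_1\log\log q$.
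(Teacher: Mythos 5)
Your proposal is correct and follows essentially the same route as the paper: both apply Siegel's theorem to $\Phi_n(x)$ (which has $\varphi(n)\ge 2$ distinct roots since $n\ge 3$) and then enlarge $c'$ so that the resulting large prime factor of $\Phi_n(q)$ exceeds every prime dividing $n$, forcing it to be a primitive prime divisor. The only cosmetic difference is that you justify this last step via the classical Zsigmondy-type lemma (a prime divisor of $\Phi_n(q)$ not dividing $n$ has order exactly $n$ modulo $q$), whereas the paper cites the equivalent description of $\Phi_n^\ast(q)$ from Glasby--L\"ubeck--Niemeyer--Praeger.
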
	
\begin{proof}
This follows from a remarkable result of Siegel~\cite[Satz~7]{Siegel}: Let $f\in\mathbb{Z}[x]$ be a polynomial with integer coefficients and at least $2$ distinct roots. Then there exist two positive constants  $c_f$ and $c_f'$ depending on $f$ only such that $P[f(q)]\ge c_f\log\log q$, $\forall q\in \mathbb{N}$ with $q\ge c_f'$.

As $n\ge 3$, $\Phi_n(x)$ has $\varphi(n)\ge 2$ distinct roots and hence we may apply Siegel's theorem with $f(x)=\Phi_n(x)$. In particular, there exist two positive constants $c$ and $c'$ depending on $n$ only such that $P[\Phi_n(q)]\ge c\log \log q$, for every integer $q$ with $q\ge c'$. Replacing $c'$ by a larger constant, we may also suppose that $P[\Phi_n(q)]\ge n+1$, for every integer $q\ge c'$.
Let $q\in\mathbb{N}$ with $q\ge c'$ and let $p=P[\Phi_n(q)]$.

Following~\cite[Definition~1]{Glasby}, we let $\Phi_n^\ast(q)$ denote the largest divisor of $\Phi_n(q)$ relatively prime to $$\prod_{i=1}^{n-1}q^i-1.$$Let $r$ be the largest prime divisor of $n$.
From~\cite[Lemma~3.1]{Glasby}, we have \[
\Phi_n^*(q)=
\begin{cases}
\Phi_n(q)&\textrm{if }r\textrm{ does not divide }\Phi_n(q),\\
\Phi_n(q)/r&\textrm{if }r\textrm{ divides }\Phi_n(q).
\end{cases}
\]
Since $p>n\ge r$, we deduce that $p$ divides $\Phi_n^\ast(q)$ and hence, by definition, $p$ is a primitive prime divisor of $q^n-1$.
\end{proof}

\section{Alternating groups and Sporadic groups}\label{thrm:alt}
In this section we prove Theorem~\ref{thrm:main2} when $G=\mathrm{Alt}(m)$ is an alternating group of degree $m\ge 5$ and when $G$ is a sporadic simple group.

We start by dealing with the alternating group $G=\mathrm{Alt}(m)$ with $m\ge 5$. Let $\Omega$ be a $G$-set with $G$ acting faithfully and transitively on $\Omega$ and let $\omega\in \Omega$. As the point stabilizer $G_\omega$ is a subgroup of $G=\mathrm{Alt}(m)$, we deduce that $G_\omega$ acts on the set $\{1,\ldots,m\}$. Now, we consider three cases, depending on whether $G_\omega$ in its action on $\{1,\ldots,m\}$ is intransitive, imprimitive (that is, transitive but not primitive), or primitive. As usual, we let $n=|\Omega|$.

\begin{lemma}\label{Alt:type1}
There exists a function $f:\mathbb{N}\to \mathbb{N}$ such that, if $G_\omega$ is intransitive on $\{1,\ldots,m\}$, then either $G$ in its action on $\Omega$ has a semiregular subgroup of order at least  $k$ or $n\leq f(k)$.  Moreover, either $G$ in its action on $\Omega$ has a semiregular subgroup of order at least $4$, or $m=|\Omega|=6$.
\end{lemma}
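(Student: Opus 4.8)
The plan is to reduce to the action on $\ell$-subsets and to manufacture semiregular subgroups from permutations whose cycles all have the same length. Since $G_\omega$ is intransitive, it has an orbit $\Lambda\subseteq\{1,\dots,m\}$ of some size $\ell$ with $1\le\ell\le m/2$, obtained by taking a smallest orbit; then $G_\omega\le \mathrm{Stab}_G(\Lambda)$, the setwise stabiliser of the $\ell$-subset $\Lambda$, so there is a $G$-equivariant surjection $\Omega=G/G_\omega\to\binom{[m]}{\ell}$ onto the $\ell$-subsets of $\{1,\dots,m\}$. Because $X\cap G_\omega^{g}\le X\cap\mathrm{Stab}_G(\Lambda)^{g}$ for every $g$, any subgroup $X$ that is semiregular on $\binom{[m]}{\ell}$ is automatically semiregular on $\Omega$; so it is enough to work with the $\ell$-subset action, and whenever $m$ is bounded in terms of $k$ we get $n\le|G|=m!/2\le f(k)$. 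The key gadget is this: for a prime $p\le m$, let $g$ be a product of $\lfloor m/p\rfloor$ disjoint $p$-cycles, leaving $m_p$ fixed points. An $\ell$-subset is $g$-invariant iff it is a union of cycles and fixed points of $g$, i.e. iff $\ell=ap+b$ with $0\le a\le\lfloor m/p\rfloor$ and $0\le b\le m_p$, which is solvable iff $\ell_p\le m_p$; as every power $g^{j}$ with $p\nmid j$ has the same cycle type, $\langle g\rangle$ is semiregular on $\binom{[m]}{\ell}$ precisely when $\ell_p>m_p$, and $g\in\mathrm{Alt}(m)$ since $p$ is odd. More generally, a product of $m/d$ equal-length $d$-cycles (with $d\mid m$) is semiregular on $\ell$-subsets iff $\gcd(d,\ell)=1$.

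For the first assertion assume $m$ is large. If $\ell=1$ the action is the natural one and a product of equal $d$-cycles with $d\in\{m,m/2\}$ is an even fixed-point-free element generating a semiregular subgroup of order at least $m/2\ge k$. If $\ell\ge k$, then the $\ell$ consecutive integers $m,m-1,\dots,m-\ell+1$ all exceed $\ell$ (as $m\ge 2\ell$), so Sylvester's Theorem~\ref{lemma:sylvester} produces a prime $p>\ell\ge k$ dividing one of them, say $m-i$ with $i\le\ell-1$; then $m_p\le i<\ell=\ell_p$ and the gadget yields a semiregular subgroup of order $p\ge k$. Finally, for the finitely many $\ell$ with $2\le\ell<k$, I look for a prime $p\ge k$ dividing $m$ or $m-1$: since then $p>\ell$, we have $\ell_p=\ell$ and $m_p\in\{0,1\}<\ell$, again giving order $p\ge k$. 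Such a prime exists for $m$ large, for if $m$ and $m-1$ were both $(k-1)$-smooth then Theorem~\ref{abctheorem}, applied with $a=m-1$, $b=1$, $c=m$, would bound $\mathrm{rad}\big(m(m-1)\big)\le\prod_{q<k}q$ and hence bound $m$. Choosing $M(k)$ beyond all these thresholds and $f(k)=M(k)!$ settles the first part.

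For the second assertion $k=4$. If some prime $p\ge 5$ has $\ell_p>m_p$, the gadget gives a semiregular subgroup of order $p\ge 5$. Otherwise $\ell_p\le m_p$ for all primes $p\ge 5$, so, since $\ell\le m/2$, Lemma~\ref{number theory} leaves only $(\ell,m)=(2,9)$ or $\ell=1$ with $m=2^a3^b$. In the first case a single $9$-cycle is semiregular on $2$-subsets, of order $9$. In the second the action is natural, and for $m\ne6$ an explicit product of equal cycles—nine-cycles if $9\mid m$, four-cycles if $8\mid m$, and two six-cycles when $m=12$—is an even fixed-point-free element of order at least $4$. This leaves $m=6$. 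If $\Omega$ is the natural degree-$6$ action ($G_\omega=\mathrm{Alt}(5)$), a semiregular subgroup would have order dividing $6$, forcing a regular group of order $6$; but the regular $C_6$ and $S_3$ on six points contain odd permutations and so do not lie in $\mathrm{Alt}(6)$, giving the genuine exception $m=|\Omega|=6$. If instead $G_\omega$ is a proper subgroup of a point-, $2$-set- or $3$-set-stabiliser, then $|G_\omega|$ cannot be divisible by both $4$ and $5$ (the only subgroup of $\mathrm{Alt}(5)$ of order divisible by $20$ is $\mathrm{Alt}(5)$ itself), so $\mathrm{Alt}(6)$ has an element of order $4$ or $5$ coprime to $|G_\omega|$, which is semiregular on $\Omega$ of order at least $4$.

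The hard part is the first assertion for \emph{small} $\ell$ when $m$ and its neighbours are smooth: there $m$ has no large prime factor to feed the gadget and no large divisor coprime to $\ell$ for the equal-cycle construction, while a single long cycle cannot be simultaneously coprime to $\ell$ and close enough to $m$. This is exactly the point where the finiteness of runs of smooth numbers, delivered by the quantitative weak $abc$ inequality of Theorem~\ref{abctheorem}, is indispensable; by comparison, verifying the semiregularity criterion and tracking parities is routine.
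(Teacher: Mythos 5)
Your overall strategy coincides with the paper's: reduce to the action on $\ell$-subsets with $\ell\le m/2$, build semiregular cyclic groups from elements that are products of $p$-cycles with $m_p<\ell_p$ fixed points, use Sylvester's theorem when $\ell$ is large and the Stewart--Tijdeman inequality when $\ell$ is small, and invoke Lemma~\ref{number theory} for the $k=4$ analysis. The one structural difference is that the paper uses primitivity of $G$ on $\Omega$ (available after Proposition~\ref{prop:reduction2}) to conclude that $G_\omega$ \emph{equals} the full setwise stabiliser of an $\ell$-set, whereas you only use $G_\omega\le\mathrm{Stab}_G(\Lambda)$ and push semiregularity through the surjection $\Omega\to\binom{[m]}{\ell}$. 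That is more general, but it forces you to treat non-maximal intransitive point stabilisers in the $m=6$ endgame, and that is where your argument has a genuine gap.

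Specifically, the claim that for every proper intransitive $G_\omega<\mathrm{Alt}(6)$ the group $\mathrm{Alt}(6)$ has an element of order $4$ or $5$ whose order is coprime to $|G_\omega|$ fails when $G_\omega\cong D_{10}\le\mathrm{Alt}(5)$: here $|G_\omega|=10$ is divisible by both $2$ and $5$, so neither order is coprime to it. This is not merely a wording problem: every element $g$ of order $4$ in $\mathrm{Alt}(6)$ has $g^2$ of cycle type $2^21^2$, and since $\mathrm{Alt}(6)$ has a single class of involutions, $g^2$ is conjugate into $D_{10}$, so $\langle g\rangle$ is \emph{not} semiregular on the $36$ cosets of $D_{10}$; elements of order $5$ obviously lie in $D_{10}$, and $\mathrm{Alt}(6)$ has no elements of order $6$. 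The conclusion of the lemma is still true for this case --- a Sylow $3$-subgroup of $\mathrm{Alt}(6)$ has order $9$ and is semiregular because $3\nmid|D_{10}|$ --- but your stated argument does not produce it. If you instead assume $G$ primitive on $\Omega$ (as the paper does, and as the reduction permits), $G_\omega$ is the full point-, $2$-set- or $3$-set-stabiliser, the case $D_{10}$ never arises, and your argument closes; otherwise you must add the order-$9$ Sylow subgroup (or an equivalent device) to cover it. The remainder of the proof --- the semiregularity criterion for products of equal-length cycles, the parity checks, the explicit constructions for $m=2^a3^b\ne 6$, and the identification of the degree-$6$ exception --- is correct.
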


\begin{proof}
Let $\kappa$ be the absolute constant arising in Theorem~\ref{abctheorem} and let $f:\mathbb{N}\to\mathbb{N}$ be defined by
$$f(k)=\max\{2k,(\exp(\kappa\cdot (k-1)^{15(k-1)}))^k\}.$$

As $G_\omega$ is intransitive on $\{1,\ldots,m\}$, $G_\omega$ fixes setwise a subset $L$ of $\{1,\ldots,m\}$ having cardinality $\ell$, for some positive integer $\ell$ with $1\le \ell\le m/2$. Thus $G_\omega\le G\cap(\mathrm{Sym}(L)\times \mathrm{Sym}(\{1,\ldots,m\}\setminus L))$. As $G$ is primitive on $\Omega$, we deduce $G_\omega= G\cap(\mathrm{Sym}(L)\times \mathrm{Sym}(\{1,\ldots,m\}\setminus L))$. Hence we may identify $\Omega$ with the set of $\ell$-subsets of $\{1,\ldots,m\}$ and we may identify the action of $G$ on $\Omega$ with the natural action of $\mathrm{Alt}(m)$ on $\ell$-subsets.

Assume first $\ell=1$. In this case, $n=m$ and the action of $G$ on $\Omega$ is the natural action of the alternating group $\mathrm{Alt}(m)$ of degree $m$. In particular, $G$ in its action on $\Omega$ has a semiregular subgroup of order $m$ when $m$ is odd, and of order $m/2$ when $m$ is even. Thus, when $m/2\ge k$ we can guarantee the existence of a semiregular subgroup of sufficiently large cardinality and, when $m/2<k$, we have $n=m\le f(k)$. For the rest of the argument, we may assume $\ell\ge 2$.

Suppose
\begin{center}
$(\dag)\qquad$ there exists a prime divisor $p$ of $m(m-1)\cdots(m-\ell+1)$ with $p\ge \max(k,\ell+1)$.
\end{center}
As $p$ is prime, there exists $i\in \{0,...,\ell-1\}$ with $p\mid m-i$. As $i\le \ell-1<p$, we have
	\[m_p=i\leq \ell-1.\]
Let $x\in \mathrm{Alt}(m)=G$ be a permutation having (in its action on $\{1,\ldots,m\}$) $m_p$ fixed points and $(m-m_p)/p$ disjoint cycles of length $p$. We claim that $X=\langle x\rangle$ is a semiregular subgroup of $G$ in its action on $\Omega$. Indeed, as $X$ has prime order $p$, as $p>\ell$ and $m_p<\ell$, no non-identity element of $X$ fixes setwise any $\ell$-subset. This shows that, when $(\dag)$ holds, $G$ in its action on $\Omega$ has a semiregular subgroup of order at least $p\ge k$.

Assume $k\le \ell+1$. As $\ell\le m/2$, by Theorem~\ref{lemma:sylvester},  there exists a prime divisor $p$ of $m(m-1)\cdots(m-\ell+1)$ with $p>\ell$. Thus $p\ge \ell+1\ge k$ and hence $p\ge\max(k,\ell+1)$. Therefore, in this case,~$(\dag)$ is satisfied.

Assume  $k\ge \ell+2$. Suppose there exists a prime divisor $p$ of $m(m-1)\cdots(m-\ell+1)$ with $p\ge k$. Since $p\ge k\ge \ell+2$,~$(\dag)$ is satisfied. Finally suppose that there exists no prime divisor $p$ of $m(m-1)\cdots(m-\ell+1)$ with $p\ge k$.
 In particular, as $\ell\ge 2$, all primes dividing $m(m-1)$ are smaller than $k$. Since the number of primes less than $k$ is at most $k-1$, we deduce
$$\mathrm{rad}(m(m-1))\le \prod_{\substack{p \textrm{ prime}\\p\le k-1}}p\le (k-1)^{k-1}.$$
 Using Theorem~\ref{abctheorem} with $a=1$, $b=m-1$ and $c=m$, we get
\[m\leq \exp(\kappa\cdot (k-1)^{15(k-1)}).\]
Thus
\[n=|\Omega|=\binom{m}{\ell}\le m^\ell\le m^k\leq (\exp(\kappa\cdot (k-1)^{15(k-1)}))^k\le f(k).\]

To conclude the proof, we need to discuss the existence of semiregular subgroups of order at least $k=4$. If~$(\dag)$ is satisfied with $k=4$, then we have semiregular subgroups of order at least $5$. Notice that, when $\ell\ge 3$, Theorem~\ref{lemma:sylvester} guarantees that $(\dag)$ is satisfied with $k=4$. Indeed, there exists a prime divisor of $m(m-1)\cdots (m-\ell+1)$ with $p\ge \ell+1$. As $p$ is prime, when $\ell\ge 3$, we have $p\ge 4$ and hence $p\ge 5$. Therefore, we may suppose that $\ell\le 2$. When $\ell=1$, $G$ has a semiregular subgroup of order $m/2$ if $m$ is even and $m$ if $m$ is odd. These values are less than $4$ only when $m=6$; therefore, we obtain the exceptional case listed in the statement of the lemma. Finally assume $\ell=2$ and suppose that $(\dag)$ is not satisfied with $k=4$. Notice that, for every prime divisor $p\ge 5$, we have $2=\ell_p\le m_p$.  Then by Lemma~\ref{number theory} we get $m=9$ and $\ell=2$.  When $m=9$ and $\ell=2$, observe that a cyclic subgroup of $\mathrm{Alt}(9)$ of order $9$ acts semiregularly on the $2$-subsets of $\{1,\ldots,9\}$.
\end{proof}

\begin{lemma}\label{Alt:type2}
Suppose that $G_\omega$ is imprimitive on $\{1,\ldots,m\}$. Then $G$ in its action on $\Omega$ has a semiregular subgroup of order $k$ with $n\le k^{2k}$. Moreover, $G$ in its action on $\Omega$ has a semiregular subgroup of order at least $4$.
\end{lemma}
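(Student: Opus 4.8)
The plan is to identify $\Omega$ with a set of partitions of $\{1,\ldots,m\}$ and then to produce an explicit semiregular cyclic subgroup generated by a single long cycle whose length is supplied by Bertrand's postulate. As in the proof of Lemma~\ref{Alt:type1}, the imprimitivity of $G_\omega$ together with the primitivity of $G$ on $\Omega$ forces $G_\omega$ to be the full stabilizer in $\mathrm{Alt}(m)$ of a partition of $\{1,\ldots,m\}$ into $d$ blocks each of size $b$, with $m=bd$ and $b,d\ge 2$. I therefore identify $\Omega$ with the set of all such partitions, so that
\[
n=|\Omega|=\frac{m!}{(b!)^d\,d!}<\frac{m!}{2},
\]
where the strict inequality holds because $(b!)^d d!\ge 8$. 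I also record that $\max(b,d)\le m/2$, since the smaller of $b$ and $d$ is at least $2$.

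Assume first $m\ge 8$. By Lemma~\ref{lemma:number theory2} there is a prime $p$ with $m/2<p\le m-3$; since $p>m/2\ge 4$ this prime is odd, so a $p$-cycle $x\in\mathrm{Sym}(m)$ actually lies in $\mathrm{Alt}(m)=G$. The key step is to show that $X=\langle x\rangle$ is semiregular on $\Omega$. As $|X|=p$ is prime, it is enough to check that $x$ fixes no partition $P\in\Omega$. Suppose it does. Then $x$ permutes the $d$ blocks of $P$, and because $d\le m/2<p$ the induced permutation of these $d$ blocks, whose order divides the prime $p$, must be trivial; hence $x$ fixes every block setwise. But the unique nontrivial $X$-orbit on $\{1,\ldots,m\}$ has size $p$, so the block meeting it would contain all $p$ moved points and have size at least $p>m/2\ge b$, contradicting that each block has size $b$. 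This proves that $X$ is semiregular of order $p$.

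It remains to combine the estimates and dispose of the small degrees. By Lemma~\ref{lemma:number theory3} we have $(m/2)^m\ge m!/2>n$, and since $p>m/2$ and $2p>m$ we obtain $p^{2p}>(m/2)^{2p}>(m/2)^m>n$; thus with $k=p$ we have a semiregular subgroup of order $k$ satisfying $n\le k^{2k}$, and $k=p\ge 5>4$ yields the moreover clause. For $m\in\{5,6,7\}$ the integers $5$ and $7$ are prime, so an imprimitive point stabilizer can occur only when $m=6$; there $\Omega$ is either the $15$ partitions into three blocks of size $2$ or the $10$ partitions into two blocks of size $3$. A $5$-cycle of $\mathrm{Alt}(6)$ fixes a single point, and the same block argument (here $5$ exceeds both the number and the common size of the blocks) shows that it acts semiregularly; taking $k=5$ gives $n\le 15<5^{10}$ and $k\ge 4$.

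The crux of the argument, and its only delicate point, is the semiregularity of the $p$-cycle: it rests on the numerical fact that a prime in the range $(m/2,m]$ simultaneously exceeds both $b$ and $d$, which is exactly what forces a fixed partition to be fixed block-by-block and then clashes with the block size $b\le m/2<p$. Securing such a prime is the role played by Bertrand's postulate in the form of Lemma~\ref{lemma:number theory2}, and the remaining inequalities are routine consequences of Lemma~\ref{lemma:number theory3}.
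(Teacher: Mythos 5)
Your proof is correct and takes essentially the same route as the paper's: a prime $p$ with $m/2<p\le m$ (Lemma~\ref{lemma:number theory2}) is coprime to $|G_\omega|$, which divides $b!^{d}d!$, so a $p$-cycle generates a semiregular subgroup, and Lemma~\ref{lemma:number theory3} then gives $n\le p^{2p}$ with $p\ge 5$. Your block-by-block verification of semiregularity is just an unwinding of the paper's coprimality observation, and your explicit treatment of $m=6$ (the only composite degree below the threshold $m\ge 8$ of Lemma~\ref{lemma:number theory2}) is a small case the paper passes over silently.
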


\begin{proof}
As $G_\omega$ is imprimitive on $\{1,\ldots,m\}$, $G_\omega$ admits a non-trivial system of imprimitivity with $a$ blocks of cardinality $b$, for some positive integers $a$ and $b$ with $1<a,b<m$ and $m=ab$. Therefore, $G_\omega$ embeds into the imprimitive wreath product $\mathrm{Sym}(b)\mathrm{wr}\mathrm{Sym}(a)$.

From Lemma~\ref{lemma:number theory2}, there exists a prime $p$ with $m/2<p\le m$. In particular, $p$ is relatively prime to $|G_\omega|$, because $p$ does not divide $b!^aa!$.
Therefore, a cyclic subgroup of order $p$ of $G=\mathrm{Alt}(m)$ acts semiregularly on $\Omega$. From Lemma~\ref{lemma:number theory3}, we have $p^{2p}\ge m!/2=|G|\ge |\Omega|=n$. Observe that $p\ge 5$ and hence $G$ contains a semiregular subgroup of order at least $4$.
\end{proof}

\begin{lemma}\label{Alt:type3}
Suppose that $G_\omega$ is primitive on $\{1,\ldots,m\}$. Then $G$ in its action on $\Omega$ has a semiregular subgroup of order $k$ with $n\le k^{2k}$. Moreover, $G$ in its action on $\Omega$ has a semiregular subgroup of order at least $4$ unless one of the following holds
\begin{enumerate}
\item\label{Alt1case1}$m=5$ and $|G_\omega|=10$,
\item\label{Alt1case2}$m=6$ and  $|G_\omega|=60$.
\end{enumerate}
\end{lemma}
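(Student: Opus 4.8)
The plan is to follow the template of Lemma~\ref{Alt:type2}: I will exhibit a prime $p$ dividing $|G|=m!/2$ but not $|G_\omega|$, so that a cyclic subgroup $X=\langle x\rangle$ of order $p$ meets every point stabiliser $G_\omega^g$ trivially (a $p$-subgroup cannot meet a group of order prime to $p$ nontrivially) and is therefore semiregular on $\Omega$. The new feature exploited to control which primes can divide $|G_\omega|$ is that $G_\omega$ is now a \emph{primitive} group of degree $m$; the relevant rigidity is Jordan's classical theorem that a primitive group of degree $m$ containing a cycle of prime length $p\le m-3$ must contain $\mathrm{Alt}(m)$ (see, e.g.,~\cite{DM}).

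First I would dispose of the generic range $m\ge 8$. By Lemma~\ref{lemma:number theory2} there is a prime $p$ with $m/2<p\le m-3$. Since $2p>m$, every element of order $p$ in $\mathrm{Sym}(m)$ is a single $p$-cycle fixing exactly $m-p\ge 3$ points. If $p$ divided $|G_\omega|$, then by Cauchy's theorem $G_\omega$ would contain such a $p$-cycle, fixing at least three points; Jordan's theorem would then force $\mathrm{Alt}(m)\le G_\omega$, contradicting $G_\omega<\mathrm{Alt}(m)=G$. Hence $p\nmid|G_\omega|$ and $X$ is semiregular of order $p$. As $p>m/2\ge 4$ is prime we have $p\ge 5$, which already settles the ``order at least $4$'' clause with no exception in this range. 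For the quantitative bound, $p>m/2$ gives $2p>m$, whence $p^{2p}>(m/2)^m\ge m!/2=|G|\ge n$ by Lemma~\ref{lemma:number theory3}; thus $n\le p^{2p}$, i.e.\ the desired bound with $k=p$.

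It remains to treat the finitely many degrees $m\in\{5,6,7\}$, where Lemma~\ref{lemma:number theory2} is unavailable, by inspecting the short list of proper primitive subgroups of $\mathrm{Alt}(m)$. For $m=7$ these are $C_7$, $7{:}3$ and $\mathrm{PSL}_2(7)$, of orders $7,21,168$, none divisible by $5$; so a $5$-cycle generates a semiregular subgroup of order $5\ge 4$ and $n\le|G|=2520<5^{10}$, with no exception. For $m=6$ the only proper primitive subgroup of $\mathrm{Alt}(6)$ is the transitive copy of $\mathrm{Alt}(5)$ (of order $60$, so $n=6$), and for $m=5$ the proper primitive subgroups of $\mathrm{Alt}(5)$ are $C_5$ and $D_{10}$. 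When $m=5$ and $G_\omega=C_5$ one has $\gcd(12,5)=1$, so $\mathrm{Alt}(4)\le\mathrm{Alt}(5)$ acts regularly and is semiregular of order $12$, giving no exception. The two remaining configurations, $m=5$ with $|G_\omega|=10$ and $m=6$ with $|G_\omega|=60$, both of degree $n=6$, are genuine exceptions: in each, every involution of $G$ fixes a point of $\Omega$, so a semiregular subgroup has odd order; since its order also divides $n=6$, it is $1$ or $3$, and an element of order $3$ that is fixed-point-free on $\Omega$ — which exists in both cases — generates a semiregular $C_3$. Hence the largest semiregular subgroup has order exactly $3<4$, while the bound $n=6\le 3^6$ holds with $k=3$.

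I expect the main obstacle to lie in the correct application of Jordan's theorem, and in particular in checking its hypotheses: one must verify that, because $p>m/2$, an element of order $p$ is forced to be a single $p$-cycle, and that it fixes at least three points precisely when $p\le m-3$, which is exactly the range Lemma~\ref{lemma:number theory2} delivers for $m\ge 8$. The secondary delicate point is the bookkeeping for $m\in\{5,6,7\}$: one must confirm that no proper primitive subgroup other than those listed arises, and that in the two surviving small configurations every semiregular subgroup genuinely has order at most $3$. This rests on the facts that all involutions of $G$ fix points of $\Omega$, and that a semiregular subgroup has order dividing $n=6$, so its only possible nontrivial orders are $2$, $3$ and $6$, of which only $3$ survives.
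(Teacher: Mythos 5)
Your proof is correct and follows essentially the same route as the paper: for $m\ge 8$ it uses Lemma~\ref{lemma:number theory2} to produce a prime $p$ with $m/2<p\le m-3$, rules out $p\mid |G_\omega|$ via Jordan's theorem on primitive groups containing a $p$-cycle, and concludes with Lemma~\ref{lemma:number theory3}, exactly as in the paper. The only difference is that for $m\in\{5,6,7\}$ the paper defers to a \texttt{magma} computation, whereas you classify the proper primitive subgroups of $\mathrm{Alt}(m)$ by hand and verify the two exceptional degree-$6$ actions directly; your case analysis there is accurate, so this is a harmless (indeed, more self-contained) substitute.
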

\begin{proof}
Assume $m\ge 8$. From Lemma~\ref{lemma:number theory2}, there exists a prime $p$ with $m/2<p\le m-3$. If $p$ divides $|G_\omega|$, then $G_\omega$ contains a cycle of length $p$ in its action on $\{1,\ldots,m\}$. From a classical result of Jordan~\cite[Theorem~3.3E]{DM}, we deduce $G_\omega\ge\mathrm{Alt}(m)$, which contradicts the fact that $G$ acts faithfully on $\Omega$. Therefore, $p$ is relatively prime to $|G_\omega|$. In particular, a cyclic subgroup of order $p$ of $G=\mathrm{Alt}(m)$ acts semiregularly on $\Omega$. From Lemma~\ref{lemma:number theory3}, we have $p^{2p}\ge m!/2=|G|\ge |\Omega|=n$. Observe also that $p\ge 5$ and hence $G$ contains a semiregular subgroup of order at least $4$.

Assume now $m<8$. Here the proof follows from a computer computation with the invaluable help of the computer algebra system \texttt{magma}~\cite{magma}.
\end{proof}

\begin{corollary}\label{cor:alt}
There exists a function $f:\mathbb{N}\to\mathbb{N}$ such that, if $k$ is a positive integer and  $G$ is an alternating group $\mathrm{Alt}(m)$ with $m\ge 5$ acting  faithfully and transitively on a set of cardinality $n$, then either $G$ has a semiregular subgroup of order at least $k$, or $n\le f(k)$. Moreover, $G$ has a semiregular subgroup of order at least $4$ unless one of the following holds
\begin{enumerate}
\item\label{cAlt1case1}$m=5$, $G_\omega$ is primitive on $\{1,\ldots,m\}$ and $|G_\omega|=10$,
\item\label{cAlt1case2}$m=6$, $G_\omega$ is primitive on $\{1,\ldots,m\}$ and  $|G_\omega|=60$.
\end{enumerate}

\end{corollary}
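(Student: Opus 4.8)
The plan is to read the corollary as the amalgamation of the three preceding lemmas, organised by the permutation action of the point stabiliser on the underlying $m$-set. Fix $\omega\in\Omega$. Since $G=\mathrm{Alt}(m)$ acts faithfully on $\Omega$, the subgroup $G_\omega$ acts (faithfully, but this is not needed) on $\{1,\ldots,m\}$, and exactly one of the following occurs: $G_\omega$ is intransitive, imprimitive, or primitive on $\{1,\ldots,m\}$. These three alternatives are controlled, respectively, by Lemma~\ref{Alt:type1}, Lemma~\ref{Alt:type2} and Lemma~\ref{Alt:type3}. Writing $f_0$ for the function supplied by Lemma~\ref{Alt:type1}, I would define
$$f(k)=\max\{f_0(k),\,k^{2k}\},$$
and show that $f$ witnesses both parts of the corollary.

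For the first assertion I would proceed case by case. If $G_\omega$ is intransitive, Lemma~\ref{Alt:type1} gives at once either a semiregular subgroup of order at least $k$ or $n\le f_0(k)\le f(k)$. If $G_\omega$ is imprimitive or primitive, Lemma~\ref{Alt:type2} and Lemma~\ref{Alt:type3} produce a semiregular subgroup whose order $p$ (a prime with $p>m/2$) satisfies $n\le p^{2p}$. Here I would split on whether $p\ge k$: if $p\ge k$ the desired large semiregular subgroup already exists, while if $p<k$ then, since $x\mapsto x^{2x}$ is increasing on $[1,\infty)$, we get $n\le p^{2p}\le (k-1)^{2(k-1)}\le k^{2k}\le f(k)$. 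Thus in every case either $G$ has a semiregular subgroup of order at least $k$, or $n\le f(k)$, as required; the finitely many small primitive cases $m<8$ of Lemma~\ref{Alt:type3} cause no trouble since there $n\le m!/2$ is bounded.

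For the ``moreover'' clause I would run the same trichotomy with $k=4$ and collect the exceptions. The imprimitive case contributes none, because there $p\ge 5>4$. The primitive case gives a semiregular subgroup of order at least $4$ except for the two configurations $(m,|G_\omega|)\in\{(5,10),(6,60)\}$ recorded in Lemma~\ref{Alt:type3}, which are exactly cases~\eqref{cAlt1case1} and~\eqref{cAlt1case2}. The intransitive case gives a semiregular subgroup of order at least $4$ except when $m=|\Omega|=6$, that is, for the natural action of $\mathrm{Alt}(6)$ on $6$ points.

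The one place needing care — and the only real (if minor) obstacle, the three lemmas having done the substantive work — is reconciling this last intransitive exception with the stated list, which mentions only primitive stabilisers. I would note that the natural degree-$6$ action of $\mathrm{Alt}(6)$, with $G_\omega\cong\mathrm{Alt}(5)$ intransitive on $\{1,\ldots,6\}$, is carried to the degree-$6$ action with primitive point stabiliser of order $60$ by the exceptional outer automorphism of $\mathrm{Alt}(6)$; since any automorphism sends semiregular subgroups to semiregular subgroups, the two actions have identical semiregular-subgroup structure and define the same permutation group $\mathrm{Alt}(6)$ of degree $6$, so the intransitive exception is subsumed by case~\eqref{cAlt1case2}. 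As a sanity check one verifies directly that the natural $\mathrm{Alt}(6)$ on $6$ points indeed has no semiregular subgroup of order at least $4$: such a subgroup has order dividing $6$, an order-$2$ one would be a fixed-point-free involution (a product of three transpositions, hence odd) and an order-$6$ one would be a regular $C_6$ or $S_3$ (each forcing an odd permutation into $G$), both excluded. This closes the combination and yields the corollary.
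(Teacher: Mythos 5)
Your proposal is correct and takes essentially the same route as the paper, whose proof of this corollary is a one-line appeal to Lemmas~\ref{Alt:type1},~\ref{Alt:type2} and~\ref{Alt:type3} according to whether $G_\omega$ is intransitive, imprimitive or primitive on $\{1,\ldots,m\}$. The only place you go beyond the paper -- reconciling the intransitive exception $m=|\Omega|=6$ of Lemma~\ref{Alt:type1} with case~\eqref{cAlt1case2} via the outer automorphism of $\mathrm{Alt}(6)$, so that the two degree-$6$ actions give the same permutation group -- is a piece of bookkeeping the paper leaves implicit, and you close it correctly.
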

\begin{proof}
Let $G=\mathrm{Alt}(m)$, let $\Omega$ be the domain of $G$ and let $\omega\in \Omega$. When $G_\omega$ is intransitive on $\{1,\ldots,m\}$ the result follows from Lemma~\ref{Alt:type1}, when $G_\omega$ is imprimitive on $\{1,\ldots,m\}$ the result follows from Lemma~\ref{Alt:type2} and when $G_\omega$ is primitive on $\{1,\ldots,m\}$ the result follows from Lemma~\ref{Alt:type3}.
\end{proof}

We conclude this section by dealing with the sporadic simple groups.
\begin{lemma}\label{lemma:sporadic}Let $G$ be a sporadic simple group acting faithfully and transitively on a set of cardinality $n$. Then either $G$ has a semiregular subgroup of order $k$ with $k^{2k}\ge n$, or $G=M_{11}$ and $n=12$. Moreover, $G$ has a semiregular subgroup of order at least $4$ unless $G=M_{11}$ and $n=12$.
\end{lemma}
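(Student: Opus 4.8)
The plan is to exploit the fact that sporadic simple groups form a finite list, and for each group we have complete knowledge (via the Atlas and \texttt{magma}) of the conjugacy classes of maximal subgroups, and hence of all primitive actions. The statement asserts a dichotomy: for a suitable cubic-type bound $k^{2k}\geq n$, either $G$ admits a large semiregular subgroup, or we land in the single genuine exception $G=M_{11}$ acting on $12$ points, which is elusive by Giudici's result. My strategy would be to reduce the claim to the existence of a single prime $p$ whose order forces a semiregular cyclic subgroup, much as in Lemmas~\ref{Alt:type2} and~\ref{Alt:type3}, and then to clean up the finitely many small residual cases by direct computation.

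First I would recall the standard criterion: if a prime $p$ divides $|G|$ but $p\nmid |G_\omega|$, then a Sylow $p$-subgroup element of order $p$ has no fixed points on $\Omega=G/G_\omega$ (since $|\Omega|=|G:G_\omega|$ is divisible by $p$ and the cycle structure forces semiregularity for an element of prime order), giving a semiregular subgroup of order $p$. Concretely, for each sporadic group $G$ one looks for a prime $p$ with $m(G)/2 < p \le $ (something), where the key point is that for large primes $p$ dividing $|G|$, any proper subgroup containing an element of order $p$ is severely constrained. So the plan is: for each of the $26$ sporadic groups, identify a suitably large prime divisor $p$ of $|G|$ and verify, using the classification of maximal subgroups, that $p$ divides no proper subgroup index appropriately — equivalently that $G$ has a semiregular subgroup of order at least $4$, and moreover of order $k$ with $k^{2k}\ge n$. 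Since $n\le |G|$ always, and since these primes $p$ satisfy $p\ge 5$ in every case, the bound $k^{2k}\ge |G|\ge n$ will follow from a crude estimate once $p$ is large enough relative to $\log|G|$; for the larger sporadic groups this is automatic, while for the small ones an explicit check suffices.

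The genuinely delicate part is isolating the exception $M_{11}$ on $12$ points and confirming there are no others. Here the semiregular-subgroup criterion via a single prime can fail precisely when every prime dividing $|G|$ also divides $|G_\omega|$, which is the defining feature of elusive actions. So the hard work is to verify, for the handful of small sporadic groups $M_{11}, M_{12}, M_{22}, M_{23}, M_{24}, J_1, J_2,\dots$ and their primitive actions of small degree, that the only elusive primitive action is $M_{11}$ of degree $12$. This I would do by a finite \texttt{magma} computation: enumerate all primitive (equivalently, maximal-subgroup) actions, and for each either exhibit a semiregular subgroup of order at least $4$ (and of order $k$ with $k^{2k}\ge n$), or detect elusiveness. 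The computation is routine in principle but must be exhaustive; Giudici's theorem already pins down $M_{11}$ on $12$ points as the unique elusive \emph{simple} primitive action, which gives the expected endpoint and serves as a correctness check.

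The main obstacle, then, is not any single conceptual difficulty but rather ensuring completeness of the case analysis: one must confirm that for every sporadic group and every faithful transitive action the prime-order argument produces a semiregular subgroup of the required size, with the lone exception surviving. I expect the cleanest route for the infinitely-many-orders part to be the uniform prime bound for all but a small initial segment of sporadic groups, reserving explicit computation for the small-degree cases where $k^{2k}\ge n$ must be checked by hand and where elusiveness can occur; the interface between these two regimes (choosing the threshold degree below which one computes directly) is where care is needed.
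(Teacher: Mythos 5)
Your overall strategy coincides with the paper's: find a large prime $p$ dividing $|G|$ but not $|G_\omega|$, so that a cyclic subgroup of order $p\ge 5$ is semiregular and $p^{2p}\ge |G|\ge n$, then dispose of the residual actions via the known maximal subgroups and a computer check. (The paper makes this quantitative by taking the two largest prime divisors $p_1>p_2$ of $|G|$ and reading off from the orders in~\cite{ATLAS} that $p_2>4$ and $p_2^{2p_2}\ge |G|$, so the bound $k^{2k}\ge n$ is automatic whenever either prime fails to divide $|G_\omega|$.) There are, however, two genuine gaps in your plan. First, you identify the failure of the prime test with elusiveness and conclude that the residual cases are confined to small-degree actions of the small Mathieu-type groups, to be settled by Giudici's theorem plus a routine enumeration. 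That is not correct: the test fails as soon as the chosen primes divide $|G_\omega|$, which happens in several \emph{non-elusive} primitive actions of rather large groups. The correct residual list --- maximal subgroups $M$ with $p_1p_2$ dividing $|M|$ --- includes $Co_2$ and $Co_3$ on cosets of $M_{23}$, $McL$ and $HS$ on cosets of $M_{22}$, and certain actions of $M_{24}$, $M_{23}$, $M_{12}$ and $M_{11}$. All of these except $M_{11}$ on $12$ points admit a semiregular subgroup of order $k\ge 4$ with $k^{2k}\ge n$, but this must be exhibited explicitly (the paper does it by machine); your proposed enumeration over "small sporadic groups of small degree" would simply miss $Co_2$, $Co_3$, $McL$ and $HS$.

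Second, the Monster. One cannot enumerate its primitive actions computationally, and its maximal subgroups were not fully classified when the paper was written; the paper treats it separately by taking $p_1=71$, $p_2=59$ and verifying from~\cite{80,81} (together with the constraint that any unlisted maximal subgroup has socle $\mathrm{PSL}_2(13)$ or $\mathrm{PSL}_2(16)$) that no maximal subgroup has order divisible by $59\cdot 71$, so the prime test always succeeds there. Without an argument of this kind your case analysis is not exhaustive. A minor further point: the failure of the single-prime criterion is not "the defining feature of elusive actions" --- an action can have every prime of $|G|$ dividing $|G_\omega|$ and still possess fixed-point-free elements of prime order --- so the interface between the prime argument and Giudici's classification needs to be stated more carefully than you do.
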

\begin{proof}
Let $\omega$ be a point in the domain $\Omega$ of $G$ and let $G_\omega$ be the stabilizer of $\omega$.

Let $p_1$ and $p_2$ be the two largest prime divisors of the order of $G$ with $p_2<p_1$. Using the order of the sporadic simple groups~\cite{ATLAS}, we have $p_2> 4$ and $p_1^{2p_1}>p_2^{2p_2}\ge |G|\ge |\Omega|=n$. In particular, if $G_\omega$ is relatively prime to $p_1$ or to  $p_2$, then the lemma follows immediately. Therefore, we may suppose that $p_1p_2$ divides $|G_\omega|$.

Suppose that $G$ is not the Monster. Let $M$ be a maximal subgroup of $G$ with $G_\omega\le M$. Using the information on~\cite{ATLAS}, we see that the order of $M$ is divisible by $p_1p_2$ only in one of the following cases:
\begin{itemize}
\item $G=Co_2$ and $M\cong M_{23}$,
\item $G=Co_3$ and $M\cong M_{23}$,
\item $G=McL$ and $M\cong M_{22}$,
\item $G=HS$ and $M\cong M_{22}$,
\item $G=M_{24}$ and $M\cong M_{23}$ or $M\cong\mathrm{PSL}_2(23)$,
\item $G=M_{23}$ and $M\cong 23:11$,
\item $G=M_{12}$ and $M\cong M_{11}$ or $M\cong\mathrm{PSL}_2(11)$,
\item $G=M_{11}$ and $M\cong\mathrm{PSL}_2(11)$.
\end{itemize}
Except when  $G=M_{11}$, for each of these cases, we have constructed with the help of a computer the permutation representation of $G$ on the cosets of $M$ and we have found a semiregular subgroup of order $k\ge 4$ with $k^{2k}\ge |G|$.

The group $G=M_{11}$ in its action on degree $12$ (on the right cosets of $\mathrm{PSL}_2(11)$) has no non-identity semiregular subgroups.\footnote{This fact was first proved by Giudici~\cite{Giudici}.} In particular, we obtain the exception listed in the statement of this lemma.

Finally, suppose $G$ is the Monster group $\mathbb M$.\footnote{It has been recently announced a complete classification of the maximal subgroups of the Monster, see~\cite{DLP}.} Here, $p_1=71$ and $p_2=59$. From~\cite[Section 3.6]{80}
and~\cite{81}, we see that the classification, up to isomorphism and up to conjugacy, of
the maximal subgroups of $G$ is complete except for a few open cases. In particular,
if $M$ is a maximal subgroup of $\mathbb M$, then either $M$ is in~\cite[Section 3.6]{80}, or the socle of
$M$ is $\mathrm{PSL}_2 (13)$ or $\mathrm{PSL}_2 (16)$. Therefore, from this list, we deduce that $G$ has no maximal subgroup whose order is divisible by $p_2p_1=59\cdot 71$.
\end{proof}

\section{Simple groups of Lie type}\label{sec:lie}

Given  a positive integer $x$, we let $\pi(x)$ denote the set of prime divisors of $x$. Moreover, given a finite group $G$, we let $\pi(G)$ denote the set of prime divisors of the order of $G$. For instance, when $G=\mathrm{Alt}(5)$, we have $\pi(G)=\{2,3,5\}$.

In this section we prove Theorem~\ref{thrm:main2} when $G$ is a simple groups of Lie type. Our main tool, besides the number theoretic results in Section~\ref{sec:number theory}, is a result of Liebeck, Praeger and Saxl~\cite[Theorem~4]{LPS}.\footnote{This result has already played an important role in other investigations on group actions on graphs. In particular, it is one of the ingredients for the proof of the Babai-Godsil conjecture on the asymptotic enumeration of Cayley digraphs~\cite{MS}.} We phrase it tailored to our current needs.
\begin{theorem}\label{thrm:LPS}
Let $T$ be a simple group of Lie type and let $M$ be a proper subgroup of $T$. Suppose that $|M|$ is divisible by each of the primes or prime powers indicated in the second or third column in \cite[Tables~$10.1$--$10.5$]{LPS}. Then the possibilities for $M$ are as given \cite[Tables~$10.1$--$10.5$]{LPS}.
\end{theorem}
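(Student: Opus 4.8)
The plan is to obtain Theorem~\ref{thrm:LPS} as a direct reformulation of \cite[Theorem~4]{LPS}, so the ``proof'' is essentially a verification that our hypotheses and conclusion are a faithful translation of theirs. First I would place the two statements side by side and check that the divisibility condition we impose---namely that $|M|$ be divisible by each prime or prime power listed in the second or third column of the relevant row of \cite[Tables~$10.1$--$10.5$]{LPS}---is literally the hypothesis under which Liebeck, Praeger and Saxl determine the possibilities for $M$. Since we reference the very same Tables $10.1$--$10.5$ for the conclusion, once the hypotheses are matched there is nothing further to establish: the list of possible $M$ is read off from those tables.

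The one point that may need a sentence of justification is the transition to our setting of an arbitrary proper subgroup $M<T$ of a simple group $T$ of Lie type. Here I would invoke the elementary fact that the divisibility hypothesis is monotone under inclusion: if $M\le M'\le T$, then every prime or prime power dividing $|M|$ also divides $|M'|$. Thus any proper subgroup $M$ meeting our hypothesis is contained in a maximal subgroup $M'$ that also meets it, so it suffices that the tables enumerate the relevant overgroup possibilities; the remaining bookkeeping is to confirm that the small or exceptional configurations handled separately in \cite{LPS} do not interfere with the cases we shall later invoke.

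The genuine obstacle lies entirely outside our argument: \cite[Theorem~4]{LPS} is a deep classification resting on the Classification of the Finite Simple Groups together with the detailed maximal-subgroup structure of the groups of Lie type, and I make no attempt to reprove it. For our purposes it functions as a black box that converts a purely arithmetic condition---divisibility of $|M|$ by certain large primes and prime powers of $|T|$---into an explicit, finite list of subgroup types. All of the work in the present theorem is therefore organizational: ensuring that the arithmetic hypothesis we shall verify for the relevant point stabilizers in Section~\ref{sec:lie} is exactly the one under which the tables apply, since it is this list that will drive the case analysis for primitive groups of Lie type.
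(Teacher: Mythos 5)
Your proposal matches the paper exactly: the paper offers no proof of Theorem~\ref{thrm:LPS} at all, presenting it explicitly as a restatement of \cite[Theorem~4]{LPS} ``phrased tailored to our current needs,'' used as a black box resting on the Classification of the Finite Simple Groups. Your additional remarks (monotonicity of the divisibility hypothesis under inclusion, and the observation that only the maximal-subgroup case is needed since $M=T_\omega$ with $T$ primitive) are consistent with the paper's own comments immediately following the theorem statement.
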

For the rest of this section, we let $T$ be a simple group of Lie type acting primitively and faithfully on a set $\Omega$ and let $\omega\in \Omega$.  We apply Theorem~\ref{thrm:LPS} with $M=T_\omega$. In particular, our proof of Theorem~\ref{thrm:main2} for the action of $T$ on $\Omega$ splits into two major cases:
\begin{itemize}
\item[Case~1:]$|T_\omega|$ is not divisible by some prime power indicated in the second or third column in Tables~10.1--10.5 of~\cite{LPS},
\item[Case~2:]$|T_\omega|$ is divisible by each prime power indicated in the second and third column in Tables~10.1--10.5 of~\cite{LPS}.
\end{itemize}
In reading Tables~$10.1$--$10.5$ in~\cite{LPS}, we are only concerned in the case that $M$ is a maximal subgroup of $T$, because $M=T_\omega$ and $T$ is primitive on $\Omega$. Moreover, we are only interested in \textit{simple} Lie groups (see Proposition~\ref{prop:reduction}) and hence in these tables we are not concerned with groups that are not simple.

In Case~1, the number theoretic results in Section~\ref{sec:number theory} will show that $|T_\omega|$ is not divisible by a large prime, which yields a large semiregular subgroup for the action of $T$ on $\Omega$. Case~2 requires a detailed analysis on the pairs $(T,M)$ arising in Tables~10.1--10.5 of~\cite{LPS}.

Since we are aiming to determine the innately transitive groups with no semiregular subgroups of order at least $4$, both cases require special care. Therefore, in order to avoid cumbersome arguments, we deal with this special case with an ad-hoc argument in Section~\ref{lie4}.

Before embarking into these proofs, we make another observation again tailored to our needs. One remarkable application of~\cite[Theorem~4]{LPS} is a classification of all pairs $(T,M)$, where $T$ is a simple group of Lie type and  $M$ is a proper subgroup of $T$ with $\pi(T)=\pi(M)$. All of these pairs are reported in~\cite[Table~10.7]{LPS}. Here, we report  in Table~\ref{tabLE} lines 3, 4, 5 and 6  of~\cite[Table~10.7]{LPS}, because these play a special role in our arguments for dealing with Case~2.
\begin{table}[!ht]
\begin{tabular}{clcc}\hline
Line&$T$&$M$&Remarks\\\hline
1&$\mathrm{PSp}_{2m}(q)'$ &$\Omega_{2m}^-(q)\unlhd M$&$m$ and $q$ even\\
&&&${\bf N}_T(\Omega_{2m}^-(q))$  in the Aschbacher class $\mathcal{C}_8$\\\hline
2&$\mathrm{P}\Omega_{2m+1}(q)$ &$\Omega_{2m}^-(q)\unlhd M$&$m$ even and $q$ odd\\
&&&${\bf N}_T(\Omega_{2m}^-(q))$  in the Aschbacher class $\mathcal{C}_1$\\\hline
3&$\mathrm{P}\Omega_{2m}^+(q)$ &$\Omega_{2m-1}(q)\unlhd M$&$m$ even\\
&&&${\bf N}_T(\Omega_{2m-1}(q))$  in the Aschbacher class $\mathcal{C}_1$\\\hline
4&$\mathrm{PSp}_{4}(q)'$ &$\mathrm{PSp}_{2}(q^2)\unlhd M$&${\bf N}_T(\mathrm{PSp}_2(q^2))$  in the Aschbacher class $\mathcal{C}_3$\\\hline
\end{tabular}
\caption{Lines 3, 4, 5 and 6 of~\cite[Table~10.7]{LPS}}\label{tabLE}
\end{table}

\begin{lemma}\label{tableline1}Let $T=\mathrm{PSp}_{2m}(q)'$ be acting primitively and faithfully on a set $\Omega$ and let $\omega\in \Omega$.\footnote{Observe that $T$ is defined as the derived subgroup of $\mathrm{PSp}_{2m}(q)$, for including the case $(m,q)=(2,2)$, where $\mathrm{PSp}_4(2)\cong\mathrm{Sym}(6)$.} Assume that $T$ and  $T_\omega$ are as in the first line of Table~$\ref{tabLE}$. Then $T$ contains a semiregular subgroup of order at least $m\log_2q+1$.

Moreover, $T$ in its action on $\Omega$ has a semiregular subgroup of order at least $4$, unless $(m,q)=(2,2)$.
\end{lemma}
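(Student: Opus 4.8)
The plan is to exploit the $\mathcal{C}_8$-geometry of the action. Since $q$ is even we have $T=\Sp_{2m}(q)$, and by the first line of Table~\ref{tabLE} the stabiliser $T_\omega=M$ is (the normaliser of) an orthogonal subgroup of minus type. Concretely, identifying $V=\mathbb{F}_q^{2m}$ with the natural module carrying the defining alternating form $B$, the point set $\Omega$ can be identified with the set of non-degenerate quadratic forms $Q$ of minus type polarising to $B$, an element $g\in T$ fixing the point corresponding to $Q$ precisely when $g\in\mathrm{O}(Q)$, that is, when $g$ preserves $Q$. Thus a subgroup $X\le T$ is semiregular on $\Omega$ if and only if no non-identity element of $X$ preserves a minus-type quadratic form polarising to $B$. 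My goal is to produce such an $X$ inside the Levi complement $\GL_m(q)$ of the stabiliser of a Lagrangian.

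First I would locate a suitable prime. Writing $q=2^f$, so that $m\log_2 q+1=mf+1$, I take $r$ to be a primitive prime divisor of $2^{mf}-1=q^m-1$ in base $2$; by Zsigmondy's theorem such $r$ exists unless $mf=6$. Then the multiplicative order of $2$ modulo $r$ is $mf$, whence $r\equiv 1\pmod{mf}$ and in particular $r\ge mf+1=m\log_2 q+1$; moreover the order of $q$ modulo $r$ equals $mf/\gcd(mf,f)=m$, so $r$ is also a primitive prime divisor of $q^m-1$ in base $q$. Next, fix a totally isotropic $m$-subspace $U\le V$ and recall that the stabiliser of $U$ in $T$ has Levi complement $\GL(U)\cong\GL_m(q)$, embedded in $\Sp_{2m}(q)$ via $V=U\oplus U^{*}$. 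A Singer cycle of $\GL(U)$ has order $q^m-1$ and hence contains an element $g$ of order $r$; since the order of $q$ modulo $r$ is $m$, every non-identity power of $g$ has order $r$ and acts irreducibly on $U$ (its minimal polynomial over $\mathbb{F}_q$ is irreducible of degree $m$). I claim $X=\langle g\rangle$ is the desired semiregular subgroup.

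The heart of the argument is the following geometric claim: an element $h\in T$ stabilising the Lagrangian $U$ and acting irreducibly on it cannot preserve a minus-type quadratic form $Q$ polarising to $B$. Suppose it did. Because $U$ is totally isotropic for $B$ and $\mathrm{char}\,\mathbb{F}_q=2$, the restriction $Q|_U$ satisfies $Q(u+u')=Q(u)+Q(u')$ and $Q(\lambda u)=\lambda^2 Q(u)$, so the singular set $S=\{u\in U:Q(u)=0\}$ is an $\mathbb{F}_q$-subspace of $U$. As $h$ preserves $Q$ and stabilises $U$, it stabilises $S$; irreducibility forces $S=0$ or $S=U$. The case $S=0$ would make $Q|_U$ an injective additive map $U\to\mathbb{F}_q$, impossible since $|U|=q^m>q$ for $m\ge 2$. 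Hence $S=U$, i.e. $U$ is totally singular for $Q$ of dimension $m$; but a non-degenerate minus-type form on a $2m$-dimensional space has Witt index $m-1$, so its totally singular subspaces have dimension at most $m-1$, a contradiction. Applying this to each non-identity $h\in X$ (all of which act irreducibly on $U$) shows that $X$ is semiregular, of order $r\ge m\log_2 q+1$, proving the first assertion.

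For the final clause, note that $m$ is even, so $mf$ is even; when $mf=2$ we are in the excluded case $(m,q)=(2,2)$ (there $r=3$ already yields the first assertion but not order $4$), and otherwise $mf\ge 4$, giving $r\ge mf+1\ge 5\ge 4$. It remains to treat the two Zsigmondy exceptions $mf=6$, namely $(m,q)=(2,8)$ and $(m,q)=(6,2)$, where the base-$2$ primitive prime divisor is unavailable. For $(2,8)$ the same construction works with $g$ of order $9$: inside the Singer cycle $C_{63}$ of $\GL_2(8)$ the subgroup $C_9$ has all its non-identity elements of order $3$ or $9$, and since neither $3$ nor $9$ divides $|\mathbb{F}_8^{*}|=7$ each acts irreducibly on $U\cong\mathbb{F}_{64}$; the geometric claim then gives a semiregular subgroup of order $9\ge 7$. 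For $(6,2)$ the Singer construction degenerates (every cyclic subgroup of $C_{63}$ meets a proper subfield, since $63=3^2\cdot 7$ and the relevant subfield orders are $3$ and $7$), and here I would verify directly — the action of $\Sp_{12}(2)$ in question has degree $2016$ — that a semiregular subgroup of order at least $7$ exists. The main obstacle is precisely the geometric claim together with this last degenerate case: once the characteristic-$2$ dichotomy $S\in\{0,U\}$ is in place the generic argument is immediate, but $(6,2)$ cannot be reached by the Singer-cycle/primitive-prime-divisor mechanism and must be handled by hand or by machine.
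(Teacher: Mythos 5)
Your proposal is correct, and it builds essentially the same semiregular subgroup as the paper: a cyclic group of primitive-prime-divisor order $r\mid 2^{mf}-1$, $r\ge mf+1$, sitting inside the $\GL_m(q)$ Levi of the stabiliser of a Lagrangian and acting irreducibly on that Lagrangian. Where you genuinely diverge is in the key step, namely why no non-identity element of this group can preserve a minus-type quadratic form polarising to $B$. The paper takes the two complementary invariant $m$-spaces, argues each is either totally singular or ``non-degenerate'' for $Q$, rules out the first option by the Witt index, invokes Huppert's theorem on Singer cycles to force each restriction to have Witt defect $1$, and gets a contradiction from additivity of the Witt defect. Your argument is more elementary: since $U$ is totally isotropic for $B=B_Q$ and the characteristic is $2$, the restriction $Q|_U$ is additive and Frobenius-semilinear, so its zero set is an $h$-invariant subspace; irreducibility forces it to be $0$ (impossible since $|U|=q^m>q$) or all of $U$ (impossible since a minus-type form has Witt index $m-1$). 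This bypasses Huppert entirely and also sidesteps the awkward point that the two Lagrangians are totally isotropic for the polarisation of $Q$, so calling their restrictions ``non-degenerate'' quadratic forms requires some care in the paper's version. The remaining differences are at the margins: you dispose of the Zsigmondy exception $(m,q)=(2,8)$ by hand via the order-$9$ subgroup of the Singer cycle of $\GL_2(8)$, where the paper defers to a computer; for $(m,q)=(6,2)$ you correctly observe that every non-trivial subgroup of the Singer cycle of $\GL_6(2)$ contains elements lying in a proper subfield, so, exactly as in the paper, this one case must be settled by a direct (machine) check — make sure that computation is actually performed, as it is the only outstanding item. One cosmetic point: for $(m,q)=(2,2)$ one has $T=\Sp_4(2)'\cong\Alt(6)\ne\Sp_4(2)$, but your construction produces odd-order elements, which automatically lie in the derived subgroup, so it still lands inside $T$ and even recovers the first assertion ($r=3=m\log_2q+1$) in that excluded case.
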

\begin{proof}
We deal with the case $(m,q)=(2,2)$ separately. Indeed, we have verified the veracity of the statement with \texttt{magma}~\cite{magma}. For the rest of the proof, we suppose $(m,q)\ne (2,2)$ and hence $T=\mathrm{PSp}_{2m}(q)$.

As $q$ is even, we have $T=\mathrm{Sp}_{2m}(q)$. Since $T_\omega$ is maximal in $T$, 
$$T_\omega={\bf N}_T(\Omega_{2m}^-(q))=\mathrm{SO}_{2m}^-(q)=\Omega_{2m}^-(q).2.$$
Let $q=2^f$, for some positive integer $f$. Fixing a suitable basis of the $\mathbb{F}_q$-vector space $V=\mathbb{F}_q^{2m}$, we may suppose that the symplectic form $\varphi$ preserved by $T$ has matrix
\[
\begin{pmatrix}
0&I\\
I&0
\end{pmatrix},
\]
where $I$ is the $m\times m$-identity matrix. 
 Suppose that $2^{fm}-1$ admits a primitive prime divisor and let $p$ be the largest such prime divisor. Let
$A\in\mathrm{GL}_{m}(q)$ be an element having order $p$ and let
$$g=
\begin{pmatrix}
A&0\\
0&(A^{-1})^T
\end{pmatrix}.
$$
An easy computation shows that $g$ preserves $\varphi$ and hence $g\in \mathrm{Sp}_{2m}(q)=T$. Every non-identity element of $X=\langle g\rangle$ fixes only two distinct non-trivial subspaces of $V$, namely, $V_1=\langle e_1,\ldots,e_m\rangle$ and $V_2=\langle e_{m+1},\ldots,e_{2m}\rangle$, where $e_1,\ldots,e_{2m}$ is the canonical basis of $V=\mathbb{F}_q^{2m}$.

Assume, by contradiction, that $X$ has a $T$-conjugate in $T_\omega=\mathrm{SO}_{2m}^-(q)$. Replacing $T_\omega$ by a suitable $T$-conjugate, we may suppose that $X\le T_\omega$. Let $Q$ be the quadratic form preserved by $T_\omega$. Since $X$ acts irreducibly on $V_1$ and on $V_2$ and since $X$ preserves $Q$, we deduce that either $V_i$ is totally singular for $Q$ or $V_i$ is non-degenerate for $Q$. Since $Q$ has Witt defect $1$ and since $\dim_{\mathbb{F}_q}(V_i)=m$, we deduce that $V_i$ is non-degenerate and hence the quadratic form $Q$ restricted to $V_i$ induces a non-degenerate quadratic form $Q_i$. As $X$ acts irreducibly on $V_i$ and as $X$ preserves $Q_i$, we deduce from~\cite{huppert} that $Q_i$ has Witt defect $1$.\footnote{From~\cite{huppert}, the group $\mathrm{SO}_{m}^+(q)$ does not contain elements acting irreducibly on the underlying vector space.} As $Q=Q_1\oplus Q_2$ and as $Q_1$, $Q_2$ have both Witt defect $1$, we deduce that $Q$ has Witt defect $0$, which is a contradiction. This contradiction has shown that no $T$-conjugate of $X$ lies in $T_\omega$ and hence $X$ acts semiregularly on $\Omega$. Since $|X|=p\ge fm+1=m\log_2q+1$, the first part of the lemma follows in this case.

Suppose that $2^{fm}-1$ does not admit a primitive prime divisor. From~\cite{zi}, this implies $(f,m)\in \{(1,2),(3,2),(1,6)\}$. We have computed with a computer the size of semiregular subgroups in these cases and in each case there is a semiregular subgroup of order at least $m\log_2 q+1$.

It remains to discuss the existence of semiregular subgroups of order at least $4$. When $4\le m\log_2q+1$, this follows from the first part of the lemma. If $m\log_2q+1\le 3$, then $(m,q)=(2,2)$.
\end{proof}

\begin{lemma}\label{tableline2}Let $T=\mathrm{P}\Omega_{2m+1}(q)$ be acting primitively and faithfully on a set $\Omega$ and let $\omega\in \Omega$. Assume that $T$ and  $T_\omega$ are as in the second line of Table~$\ref{tabLE}$. Then $T$ contains a semiregular subgroup of order at least $(q^{m/2}+1)/4$  when $q\equiv 3\pmod 4$ and $m/2$ is odd, and of order at least $(q^{m/2}+1)/2$ in all other cases.

Moreover, $T$ in its action on $\Omega$ has a semiregular subgroup of order at least $4$.
\end{lemma}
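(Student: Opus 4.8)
The plan is to follow the template of Lemma~\ref{tableline1}: exhibit a large cyclic torus of $T$ that cannot be conjugated into the point stabiliser, so that it acts semiregularly. First I would pin down the point stabiliser. Since $\dim V=2m+1$ is odd and $q$ is odd, the centre of $\Omega_{2m+1}(q)$ is trivial, so $T=\Omega_{2m+1}(q)$ acting on a non-degenerate quadratic space $V=\mathbb{F}_q^{2m+1}$ with form $Q$; lying in the Aschbacher class $\mathcal{C}_1$ of line~2 of Table~\ref{tabLE}, the stabiliser $T_\omega=\mathbf{N}_{T}(\Omega_{2m}^-(q))$ is the stabiliser of a non-degenerate hyperplane of minus type, equivalently of a non-singular $1$-space $\langle v\rangle=H^\perp$ whose perp $v^\perp$ has minus type. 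The decisive elementary remark is that an isometry $g$ fixes a point of $\Omega$ if and only if $g$ has a \emph{non-singular} eigenvector with eigenvalue $\pm1$ whose perp has minus type; the eigenvalue is forced to lie in $\{\pm1\}$ because $g$ preserves $Q(v)\ne0$. Thus it suffices to produce an element of large order all of whose non-singular $\pm1$-eigenvectors have \emph{plus}-type perp.

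To build such an element I would split $V=\langle u\rangle\perp W$ with $u$ non-singular and $W=u^\perp$ of plus type, and then decompose $W=V_1\perp V_2$ with $V_1,V_2$ of minus type and $\dim V_1=\dim V_2=m$, so that minus $\perp$ minus gives plus. Since $m$ is even, each $\mathrm{GO}_m^-(q)$ contains a Singer-type cyclic torus of order $q^{m/2}+1$ acting irreducibly on $V_i$; arguing exactly as in Lemma~\ref{tableline1} one checks that such a generator $x_i$ has determinant $1$, so $g=x_1\perp x_2\perp 1_{\langle u\rangle}$ lies in $\mathrm{SO}_{2m+1}(q)$, has order $q^{m/2}+1$, and has no eigenvalue $\pm1$ on $W$. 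Consequently the only non-singular fixed $1$-space of $g$ is $\langle u\rangle$, whose perp $W$ is of plus type, and hence $g$ lies in no conjugate of $T_\omega$. The existence of the irreducible torus is guaranteed by Zsigmondy's theorem~\cite{zi} outside the usual short list of degenerate pairs, which I would dispatch with \texttt{magma}~\cite{magma}.

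The delicate point, and the main obstacle, is passing from $g$ to an honest semiregular subgroup of $T=\Omega_{2m+1}(q)$ and computing its order, for two reasons which interact. First, $g$ lies a priori only in $\mathrm{SO}$, so I must intersect $\langle g\rangle$ with $\Omega$ via the spinor norm, a homomorphism $\langle g\rangle\to\mathbb{F}_q^\ast/(\mathbb{F}_q^\ast)^2$ that is automatically trivial on elements of odd order. Second, and more seriously, the unique involution $g^{(q^{m/2}+1)/2}$ equals $-1_W\oplus 1_{\langle u\rangle}$, and this involution \emph{does} fix minus-type $1$-spaces: as $Q|_W$ represents both square classes, the perps $v^{\perp}=\langle u\rangle\perp v^{\perp_W}$ realise both types, so this involution is not semiregular. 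Hence any semiregular subgroup inside $\langle g\rangle$ must avoid it, which forces a careful control of the $2$-part. By the lifting-the-exponent lemma one has $v_2(q^{m/2}+1)=1$ unless $q\equiv3\pmod4$ and $m/2$ is odd, in which case $v_2(q^{m/2}+1)=v_2(q+1)\ge2$; a bookkeeping of the spinor norm together with this $2$-adic analysis then produces a semiregular subgroup of order at least $(q^{m/2}+1)/2$ in the generic case and at least $(q^{m/2}+1)/4$ in the exceptional case, which is exactly the claimed bound, with the residual small-valuation pairs handled directly. Finally, for the ``moreover'' statement one notes that $(q^{m/2}+1)/4\ge4$ holds for all pairs with $q^{m/2}\ge15$, leaving only the finitely many small cases (with $m=2$, $q\le13$, and $m=4$, $q=3$), which, together with any Zsigmondy exceptions, I would verify with \texttt{magma}~\cite{magma}.
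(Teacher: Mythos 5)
Your proposal follows essentially the same route as the paper's proof: the same identification of $T_\omega$ as the stabiliser of a non-singular $1$-space whose perp has minus type, the same decomposition $V=\langle u\rangle\perp(V_1\perp V_2)$ of a plus-type hyperplane into two minus-type halves of dimension $m$, the same Singer-torus element $x_1\oplus x_2$ detected through its $\pm1$-eigenvectors, and the same $2$-adic case division producing the bounds $(q^{m/2}+1)/2$ and $(q^{m/2}+1)/4$. The only differences are presentational: the paper quotes Huppert directly for a cyclic subgroup of order $(q^{m/2}+1)/2$ inside $\Omega_m^-(q)$ and removes the troublesome involution $-1_W$ via a $\gcd$ computation with $q-1$, whereas you reach the same point by descending from the full torus of $\mathrm{GO}_m^-(q)$ through the determinant and spinor norm.
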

\begin{proof}
Clearly, $T=\Omega_{2m+1}(q)$, because $\Omega_{2m+1}(q)$ is centerless. Here $T_\omega$ is the stabilizer of a non-singular $1$-dimensional subspace $\langle v\rangle$ of $V=\mathbb{F}_q^{2m+1}$ such that the non-degenerate orthogonal form $Q$ for $T=\mathrm{P}\Omega_{2m+1}(q)=\Omega_{2m+1}(q)$ restricted to $\langle v\rangle^\perp$ has Witt defect $1$. Let $w\in V\setminus\{0\}$ such that the quadratic form $Q$ restricted to $\langle w\rangle^\perp$ has Witt defect $0$. Thus the orthogonal decomposition $V=\langle w\rangle\perp \langle w\rangle^\perp$ gives rise to an embedding of $\Omega_1(q)\times \Omega_{2m}^+(q)=\Omega_{2m}^+(q)$ in $\Omega_{2m+1}(q)=T$. The vector space $W=\langle w\rangle^\perp$ is endowed with the non-degenerate quadratic form $Q_{|W}$ having Witt defect zero and hence $W$ admits a direct sum decomposition
$$W=W_1\oplus W_2,$$
where $\dim_{\mathbb{F}_q}(W_i)=m$ and the quadratic form $Q_{|W}$ restricted to $W_i$ has Witt defect $1$. Using this orthogonal decomposition, we deduce the embedding $\Omega_m^-(q)\times\Omega_m^-(q)\le \Omega_{2m}^+(q)$. By~\cite{huppert}, $\Omega_m^-(q)$ contains a cyclic subgroup of order $(q^{m/2}+1)/2$ acting as a scalar in $\mathbb{F}_{q^m}$, when the $\mathbb{F}_q$-vector space $W_i\cong \mathbb{F}_q^m$ is identified with the additive group of the field $\mathbb{F}_{q^m}$. Let $x_i$ be a generator of this cyclic subgroup. Let $$\ell\in \left\{0,\ldots,\frac{q^{m/2}+1}{2}-1\right\}$$ be a divisor of $(q^{m/2}+1)/2$ and suppose that $x_i^\ell$ fixes a $1$-dimensional subspace of $W_i$. Then $x_i^\ell$ has $m$ eigenvalues in $\mathbb{F}_q$ and hence $$\frac{q^{m/2}+1}{2\ell}$$
divides $q-1$. Observe that
\[
\gcd((q^{m/2}+1)/2,q-1)=
\begin{cases}
2&\textrm{when }q\equiv 3\pmod 4\textrm{ and }m/2\textrm{ is odd},\\
1&\textrm{otherwise}.
\end{cases}
\]Moreover, when $q\equiv 3\pmod 4$ and $m/2$ is odd, $x_i^{(q^{m/2}+1)/4}$ is the scalar matrix $-1$. Therefore $\langle x_i\rangle/\{1,-1\}$ acts semiregularly on the $1$-dimensional subspaces of $W_i$ when $q\equiv 3\pmod 4$ and $m/2$ is odd, and $\langle x_i\rangle$ acts semiregularly on the $1$-dimensional subspaces of $W_i$ in all the remaining cases.

Let  $g=x_1\oplus x_2\in \Omega_{2m}^+(q)\le \Omega_{2m+1}(q)$ and let $X=\langle g\rangle$. Now, $X$ has order $(q^{m/2}+1)/4$ when $q\equiv 3\pmod 4$ and $m/2$ is odd, and $X$ has order $(q^{m/2}+1)/2$ in all other cases.
From the discussion above, we see that every non-identity element of $X$ fixes only the $1$-dimensional subspace $\langle w\rangle$ and hence it is a derangement for the action on $\Omega$.

It remains to discuss the existence of semiregular subgroups of order at least $4$. When $m\ge 4$, this follows from the first part of the proof. Suppose then $m=2$. Let $\varepsilon=2$ when $q\equiv 1\pmod 4$ and $\varepsilon=4$ when $q\equiv 3\pmod 4$. Now, $(q+1)/\varepsilon\ge 4$ only when $q\notin \{3,5,7,11\}$. Finally, when $m=2$ and  $q\in \{3,5,7,11\}$, we may use a regular unipotent element of $\Omega_5(q)$ to obtain a semiregular subgroup of order $9$ when $q=3$ and of order $q$ when $q\in \{5,7,11\}$.
\end{proof}

\begin{lemma}\label{tableline3}Let $T=\mathrm{P}\Omega_{2m}^+(q)$ be acting primitively and faithfully on a set $\Omega$ and let $\omega\in \Omega$. Assume that $T$ and  $T_\omega$ are as in the third line of Table~$\ref{tabLE}$. Then $T$ contains a semiregular subgroup of order at least $(q^{m/2}+1)/4$ when $q\equiv 3\pmod 4$  and $m/2$ is odd, and of order at least $(q^{m/2}+1)/\gcd(2,q-1)$ in all other cases.

Moreover, $T$ in its action on $\Omega$ has a semiregular subgroup of order at least $4$.
\end{lemma}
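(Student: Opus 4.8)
The plan is to follow closely the strategy of Lemma~\ref{tableline2}, with one genuine simplification: here the whole space splits orthogonally into two minus-type halves, so the semiregular element will fix \emph{no} $1$-dimensional subspace at all, rather than fixing a single non-point line as in Lemma~\ref{tableline2}. Write $V=\mathbb{F}_q^{2m}$, equipped with the non-degenerate quadratic form $Q$ of Witt defect $0$ preserved by $T=\mathrm{P}\Omega_{2m}^+(q)$; since $T_\omega$ is as in the third line of Table~\ref{tabLE}, the points of $\Omega$ are (projective) non-singular $1$-spaces of a fixed type. As $m$ is even and the orthogonal sum of two minus-type spaces has plus type, I would write $V=V_1\perp V_2$ with $\dim_{\mathbb{F}_q}V_i=m$ and $Q_{|V_i}$ of Witt defect $1$, yielding an embedding $\Omega_m^-(q)\times\Omega_m^-(q)\le\Omega_{2m}^+(q)$.

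Next, by~\cite{huppert} each factor $\Omega_m^-(q)$ contains a cyclic subgroup $\langle x_i\rangle$ of order $(q^{m/2}+1)/\gcd(2,q-1)$ acting by scalar multiplication by some $\zeta_i$ on $V_i\cong\mathbb{F}_{q^m}$, where $\zeta_i$ lies in the norm-$1$ torus of order $q^{m/2}+1$. The key elementary computation is that $x_i^\ell$ fixes a $1$-space of $V_i$ if and only if $\zeta_i^\ell\in\mathbb{F}_q^\ast$; since $\gcd(q^{m/2}+1,q-1)=\gcd(2,q-1)$, the only scalars lying in $\langle x_i\rangle$ are $1$, together with $-1$ precisely when $q\equiv 3\pmod 4$ and $m/2$ is odd, equivalently $4\mid q^{m/2}+1$, which is also exactly when $-I\in\Omega_m^-(q)$.

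I would then set $g=x_1\oplus x_2$ and $X=\langle g\rangle$. Because $\langle x_1\rangle$ and $\langle x_2\rangle$ have equal order, for each $\ell$ either both $x_1^\ell,x_2^\ell$ are scalar or neither is. If neither is scalar, then neither $x_i^\ell$ has an eigenvalue in $\mathbb{F}_q$, so a fixed line $\langle u_1+u_2\rangle$ with $u_i\in V_i$ would force a common $\mathbb{F}_q$-eigenvalue of $x_1^\ell$ and $x_2^\ell$, which is impossible; hence $g^\ell$ fixes no $1$-space of $V$. If both are scalar, then $g^\ell\in\{I,-I\}$. Thus in $T=\mathrm{P}\Omega_{2m}^+(q)$ every non-identity power of the image $\bar g$ is a derangement, i.e.\ $\langle\bar g\rangle$ is semiregular on $\Omega$. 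When $q\equiv 3\pmod 4$ and $m/2$ is odd one has $g^{(q^{m/2}+1)/4}=-I$, trivial in $\mathrm{P}\Omega_{2m}^+(q)$ since there $-I\in\Omega_{2m}^+(q)$, so $|\langle\bar g\rangle|=(q^{m/2}+1)/4$; in all other cases $-I\notin\langle x_i\rangle$ and $|\langle\bar g\rangle|=(q^{m/2}+1)/\gcd(2,q-1)$, as required.

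For the final assertion, since $T$ is simple and $m$ is even we have $m\ge 4$, and the order just produced is always at least $(q^{m/2}+1)/4$, which exceeds $4$ as soon as $q^{m/2}\ge 15$. The finitely many pairs $(q,m)$ with $q^{m/2}\le 14$ all fall outside the exceptional case (they force $m/2\le 3$ with $q$ small, and the exceptional case needs $q\ge 3$ with $m/2\ge 3$ odd, hence $q^{m/2}\ge 27$), so the stronger bound $(q^{m/2}+1)/\gcd(2,q-1)\ge(q^{m/2}+1)/2$ applies and a direct check gives order at least $4$ in each. I expect the delicate points to be the bookkeeping around the scalar $-I$ and the passage to the projective group, together with the common-eigenvalue argument ruling out fixed mixed lines; the underlying subgroup geometry is otherwise routine and, in the smallest cases, can be confirmed with \texttt{magma}~\cite{magma}.
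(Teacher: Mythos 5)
Your proposal is correct and follows essentially the same route as the paper: the orthogonal splitting $V=V_1\perp V_2$ into two minus-type halves, Huppert's cyclic torus of order $(q^{m/2}+1)/\gcd(2,q-1)$ in each $\Omega_m^-(q)$, the element $g=x_1\oplus x_2$, and the same $\gcd$ analysis of when a power becomes scalar. If anything you are more careful than the printed proof on two points it leaves implicit --- ruling out fixed ``mixed'' lines $\langle u_1+u_2\rangle$ via the common-eigenvalue argument, and checking the final order-at-least-$4$ claim by observing that the exceptional $(q^{m/2}+1)/4$ bound only arises when $m/2\ge 3$ is odd and $q\ge 3$.
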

\begin{proof}
Here $T_\omega={\bf N}_{T}(\Omega_{2m-1}(q))$ and hence $T_\omega$ is the stabilizer of a non-singular $1$-dimensional subspace $V=\mathbb{F}_q^{2m}$. Observe that when $q$ is even, $\Omega_{2m-1}(q)=\mathrm{Sp}_{2m-2}(q)$.  Moreover, $m\ge 4$, because $\mathrm{P}\Omega_4^+(q)\cong\mathrm{PSL}_2(q)\times\mathrm{PSL}_2(q)$ is not simple.

The vector space $V$ is endowed with a non-degenerate quadratic form having Witt defect zero. Therefore, $V$ admits a direct sum decomposition
$$V=V_1\oplus V_2,$$
where $\dim_{\mathbb{F}_q}(V_i)=m$ and the quadratic form restricted to $V_i$ has Witt defect $1$. Using this orthogonal decomposition, we deduce the embedding $\Omega_m^-(q)\times\Omega_m^-(q)\le \Omega_{2m}^+(q)$. Now, by~\cite{huppert}, $\Omega_m^-(q)$ contains a cyclic subgroup of order $$\frac{q^{m/2}+1}{\gcd(2,q-1)}$$ acting as a scalar in $\mathbb{F}_{q^m}$, when the $\mathbb{F}_q$-vector space $V_i\cong \mathbb{F}_q^m$ is identified with $\mathbb{F}_{q^m}$. (The argument here is similar to the proof of Lemma~\ref{tableline2}.) Let $x_i$ be a generator of this cyclic subgroup. Let $\ell\in \{0,\ldots,(q^{m/2}+1)/\gcd(2,q-1)-1\}$ be a divisor of $(q^{m/2}+1)/\gcd(2,q-1)$ and suppose that $x_i^\ell$ fixes a $1$-dimensional subspace of $V_i$. Then $x_i^\ell$ has $m$ eigenvalues in $\mathbb{F}_q$ and hence $$\frac{q^{m/2}+1}{\ell\gcd(2,q-1)}$$
divides $q-1$. Observe that
\[
\gcd((q^{m/2}+1)/\gcd(2,q-1),q-1)=
\begin{cases}
2&\textrm{when }q\equiv 3\pmod 4\textrm{ and }m/2\textrm{ is odd},\\
1&\textrm{otherwise}.
\end{cases}
\]Moreover, when $q\equiv 3\pmod 4$ and $m/2$ is odd, $x_i^{(q^{m/2}+1)/4}$ is the scalar matrix $-1$. Therefore $\langle x_i\rangle/\{1,-1\}$ acts semiregularly on the $1$-dimensional subspaces of $V_i$ when $q\equiv 3\pmod 4$ and $m/2$ is odd, and $\langle x_i\rangle$ acts semiregularly on the $1$-dimensional subspaces of $V_i$ in all the remaining cases.

Let  $\tilde g=x_1\oplus x_2\in \Omega_{2m}^+(q)$, let $g$ be the projective image of $\tilde g$ in $\mathrm{P}\Omega_{2m}^+(q)$ and let $X=\langle g\rangle$. Now, $X$ has order $(q^{m/2}+1)/4$ when $q\equiv 3\pmod 4$ and $m/2$ is odd, and $X$ has order $(q^{m/2}+1)/\gcd(2,q-1)$ in all other cases.
From the discussion above, we see that every non-identity element of $X$ is a derangement for the action on the non-degenerate $1$-dimensional subspaces of $V=\mathbb{F}_q^{2m}$.

It remains to discuss the existence of semiregular subgroups of order at least $4$. As $m\ge 4$, this follows from the first part of the proof.
\end{proof}

\begin{lemma}\label{tableline4}Let $T=\mathrm{PSp}_{4}(q)'$ be acting primitively and faithfully on a set $\Omega$ and let $\omega\in \Omega$. Assume that $T$ and  $T_\omega$ are as in the fourth line of Table~$\ref{tabLE}$. Then $T$ contains a semiregular subgroup of order at least $q^2$ when $q$ is odd  and of order at least $\log_2q+1$ when $q$ is even.

Moreover, $T$ in its action on $\Omega$ has a semiregular subgroup of order at least $4$, except when $q=2$.
\end{lemma}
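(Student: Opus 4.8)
The plan is to treat the odd and even cases with different constructions, working inside $\Sp_4(q)$ (whose centre has order $\gcd(2,q-1)$) and then passing to $T=\PSp_4(q)'$. Throughout I use that the preimage of $T_\omega$ is, up to the centre, the field-extension subgroup $\Sp_2(q^2).2=\mathrm{SL}_2(q^2).2$, and that an element of $\mathrm{SL}_2(q^2)\le\Sp_4(q)$ with $\overline{\mathbb{F}}_q$-eigenvalues $\mu,\mu^{-1}$ on the natural $\mathbb{F}_{q^2}$-module has, as an $\mathbb{F}_q$-linear map on $\mathbb{F}_q^4$ (restriction of scalars), eigenvalue multiset $\{\mu,\mu^{-1},\mu^q,\mu^{-q}\}$. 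Since conjugacy preserves both Jordan type and eigenvalue multiset, an element of $\Sp_4(q)$ is semiregular on $\Omega=T/T_\omega$ as soon as no nontrivial power of it is conjugate into $T_\omega$, which I will detect by showing its Jordan type (odd $q$) or its eigenvalue multiset (even $q$) cannot occur in $T_\omega$.

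For $q$ odd, first I note that a nontrivial unipotent element of $\mathrm{SL}_2(q^2)$ has, on $\mathbb{F}_q^4$, Jordan type $(2,2)$, and that every unipotent (hence odd-order) element of $\Sp_2(q^2).2$ lies in $\Sp_2(q^2)$, since it maps trivially to the quotient $\mathbb{Z}_2$; thus all unipotent elements of $T_\omega$ have type $(2,2)$. I then build a semiregular subgroup of order $q^2$ from the centraliser of a regular unipotent element $u$. Writing $\epsilon=u-1$, so that $\mathbb{F}_q[u]=\mathbb{F}_q[\epsilon]/(\epsilon^4)$ and $C_{\GL_4(q)}(u)=\mathbb{F}_q[u]^\times$, the symplectic condition rewrites as $p(u)p(u^{-1})=1$ and cuts out the unipotent part of $C_{\Sp_4(q)}(u)$ as $X=\{1+a_1\epsilon+\tfrac{a_1^2-a_1}{2}\epsilon^2+a_3\epsilon^3 : a_1,a_3\in\mathbb{F}_q\}$, an abelian group of order $q^2$ (the only denominator is $2$, so this is valid for every odd $q$, including the characteristic $3$ case). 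Inspecting $X$: if $a_1\ne0$ the element is regular unipotent of type $(4)$, while if $a_1=0$ the defining relation forces $a_2=0$, leaving type $(2,1,1)$; in particular no nontrivial element of $X$ has type $(2,2)$. Hence no nontrivial element of $X$ is conjugate into $T_\omega$, so $X$ is semiregular of order $q^2\ge 9\ge 4$.

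For $q$ even I use a semisimple element. Decomposing $\mathbb{F}_q^4$ as an orthogonal sum of two symplectic planes gives $\mathrm{SL}_2(q)\times\mathrm{SL}_2(q)\le\Sp_4(q)=T$; taking $h\in\mathrm{SL}_2(q)$ of order $q+1$ and $g=(h,1)$ produces an element of order $q+1$ with eigenvalue multiset $\{\zeta,\zeta^{-1},1,1\}$, where $\zeta\in\mathbb{F}_{q^2}$ has order $q+1$. For $k\not\equiv0\pmod{q+1}$ the element $g^k$ has $\zeta^k\ne1$ and eigenvalue $1$ of multiplicity exactly two; since $x\mapsto x^q$ fixes only $1$, such a multiset is never of the form $\{\mu,\mu^{-1},\mu^q,\mu^{-q}\}$, so $g^k$ is not conjugate into $\mathrm{SL}_2(q^2)$. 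As $q+1$ is odd, $g^k$ also cannot lie in the nontrivial coset of $\Sp_2(q^2)$ in $T_\omega$. Thus $\langle g\rangle$ is semiregular of order $q+1\ge\log_2 q+1$, and $q+1\ge4$ precisely when $q\ge4$. The remaining case $q=2$, where $T=\Sp_4(2)'\cong\Alt(6)$, is small and I would settle it by direct computation, yielding the claimed exception for semiregular subgroups of order $4$. The main obstacle is the odd-$q$ construction: one must verify, uniformly in the characteristic, that the displayed set $X$ really is the unipotent part of $C_{\Sp_4(q)}(u)$ and that the constraint forces $a_2=0$ whenever $a_1=0$, so that the type $(2,2)$ is genuinely avoided; the even case, by contrast, is essentially an eigenvalue-bookkeeping argument.
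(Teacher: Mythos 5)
Your proposal is correct, and in both the odd and even cases it takes a genuinely different route from the paper, even though the overall strategy (exhibit a cyclic or abelian subgroup whose nontrivial elements carry a conjugacy invariant --- Jordan type, respectively eigenvalue multiset --- that cannot occur in the field-extension subgroup $T_\omega$) is the same. For odd $q$ the paper writes down an explicit set of upper-unitriangular matrices and asserts they all have Jordan type $(2,1,1)$; your centraliser construction $X=C_{\Sp_4(q)}(u)_{\mathrm{unip}}$ instead mixes types $(4)$ and $(2,1,1)$, and I have checked that your computation in $\mathbb{F}_q[\epsilon]/(\epsilon^4)$ is right: the symplectic condition is exactly $a_2=(a_1^2-a_1)/2$ with $a_3$ free, the $\epsilon^3$-coefficient vanishes automatically, and the two possible types both avoid $(2,2)$. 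Your route is in fact the safer one: no $q^2$-element subgroup of the abelian unipotent radical of the Siegel parabolic can consist of the identity together with type-$(2,1,1)$ elements only (rank-one symmetric $2\times2$ matrices do not form a $2$-dimensional subspace), and indeed the matrices printed in the paper with $b\neq 0$ do not preserve the symplectic form displayed there, so admitting the regular unipotent elements of type $(4)$, as you do, is essential. For even $q$ the paper takes a diagonal torus element with four distinct eigenvalues built from a primitive prime divisor $p\neq 3$ of $q-1$, obtaining only order $p\geq\log_2 q+1$ and needing computer checks for $q\in\{4,64\}$; your element of order $q+1$ in an $\mathrm{SL}_2(q)\times\mathrm{SL}_2(q)$ subgroup gives a substantially larger semiregular subgroup with no exceptional cases beyond $q=2$, so it actually strengthens the stated bound. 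One wording quibble: ``$x\mapsto x^q$ fixes only $1$'' is false as stated (it fixes all of $\mathbb{F}_q$); what you need, and what is true in characteristic $2$, is that $\mu^{\pm q}=1$ forces $\mu=1$, so the multiset $\{\mu,\mu^{-1},\mu^q,\mu^{-q}\}$ contains $1$ with multiplicity $0$ or $4$, never exactly $2$. The parity argument ruling out the outer coset of $\Sp_2(q^2)$ and the final reduction of $q=2$ to a direct computation match the paper's treatment.
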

\begin{proof}
Without loss of generality, we may suppose that the alternating form defining $T=\mathrm{PSp}_4(q)$ is
\[
\begin{pmatrix}
0&0&1&0\\
0&0&0&1\\
-1&0&0&0\\
0&-1&0&0
\end{pmatrix}.
\]

Suppose that $q$ is odd. The  unipotent elements of $T_\omega$ lie in $\mathrm{PSp}_2(q^2)$ because $q$ is odd. Hence the non-identity unipotent elements of  $T_\omega$ have two Jordan blocks of size $2$, because $\mathrm{PSp}_2(q^2)$ preserves an extension field. Now, consider the subgroup $X$ of $T$ consisting of the matrices
\[
\begin{pmatrix}
1&0&a&b\\
0&1&0&0\\
0&0&1&0\\
0&0&0&1
\end{pmatrix},\, \forall a,b\in\mathbb{F}_q.
\]
The non-identity elements of $X$ have two Jordan blocks of size 1 and one Jordan block of size 2. Therefore, except for the identity, none of the elements of $X$ is $T$-conjugate to an element of $T_\omega$. Therefore $X$ is a semiregular subgroup of order $q^2$.

Suppose that $q$ is even. Write $q=2^f$, for some positive integer $f$. Thus $T=\mathrm{Sp}_4(q)$ and $\mathrm{Sp}_2(q^2)\unlhd T_\omega= \mathrm{Sp}_2(q^2):2$. The elements of odd order of $T_\omega$ lie in $\mathrm{Sp}_2(q^2)$ and hence the elements of odd order of $T_\omega$ have either zero or two (with multiplicity two) eigenvalues in $\mathbb{F}_q$. Assume that $2^f-1$ is divisible by a primitive prime divisor $p$ with $p\ne 3$. From~\cite{zi}, this implies $f\notin\{2,6\}$. Clearly, $p\ge f+1$. Let $\lambda\in \mathbb{F}_q^\ast$ having order $p$.  Now, consider the subgroup $X$ of $T$ consisting of the matrices
\[
\begin{pmatrix}
a&0&0&0\\
0&a^2&0&0\\
0&0&a^{-1}&0\\
0&0&0&a^{-2}
\end{pmatrix}, \,\forall a\in\langle\lambda\rangle\subseteq\mathbb{F}_q^\ast.
\]
As $p\ne 3$, the non-identity elements of $X$ have four distinct eigenvalues. Therefore, except for the identity, none of the elements of $X$ is $T$-conjugate to an element of $T_\omega$. Therefore $X$ is a semiregular subgroup of order $p\ge f+1$. We have verified with a computer that, when $f\in \{2,6\}$, the group $T=\mathrm{Sp}_4(q)$ admits a semiregular subgroup of order at least $f$.

It remains to discuss the existence of semiregular subgroups of order at least $4$. When $q$ is odd this is clear because $q^2\ge 4$. When $q$ is even and $\log_2q+1\ge 4$, this is also clear. When $q$ is even and $\log_2q+1\le 3$, we have $q\in \{2,4\}$. We have verified with a computer that when $q=4$, $T$ in its action on $\Omega$ has a semiregular subgroup of order $15$.
\end{proof}

\subsection{Semiregular subgroups of large order}\label{sectioncase1}

Recall that $T$ is a simple group of Lie type acting primitively and faithfully on a set $\Omega$ and  $\omega\in \Omega$. Moreover, we apply Theorem~\ref{thrm:LPS} with $M=T_\omega$.

\begin{lemma}\label{lemma:-11}
There exists a function $f:\mathbb{N}\to \mathbb{N}$ such that, if $T$ is a simple group of Lie type acting primitively and faithfully on a set $\Omega$, then either $T$ in its action on $\Omega$ has a semiregular subgroup of order at least $k$ or $|\Omega|\le f(k)$.
\end{lemma}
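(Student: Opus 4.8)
The plan is to reduce the whole statement to the production of a single large prime. The key elementary observation is that if a prime $p$ divides $|T|$ but not $|T_\omega|$, then $T$ has a semiregular subgroup of order $p$: by Cauchy's theorem there is an element $g$ of order $p$, and were some nontrivial power $g^i$ to fix a point $\delta$, the prime $p=|g^i|$ would divide $|T_\delta|=|T_\omega|$, a contradiction, so $\langle g\rangle$ is semiregular. Hence it suffices to exhibit, whenever $|\Omega|$ is large, a prime $p\ge k$ dividing $|T|$ but not $|T_\omega|$.

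To control the divisors of $|T_\omega|$ I would feed $M=T_\omega$ into Theorem~\ref{thrm:LPS}. Writing $T$ as a group of Lie type over $\mathbb{F}_q$ of (twisted) rank $r$, the quantities indicated in the relevant row of Tables~$10.1$--$10.5$ of~\cite{LPS} are primitive prime divisors of $q^{e}-1$ for the largest one or two exponents $e$ attached to $T$, and for the classical families these exponents grow linearly with $r$. Theorem~\ref{thrm:LPS} then yields a dichotomy: either (Case~1) no primitive prime divisor of $q^{e_1}-1$ divides $|T_\omega|$, where $e_1$ is the largest indicated exponent, or (Case~2) $T_\omega$ is one of the subgroups explicitly listed in those tables.

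In Case~1 I am free to take $p$ to be the \emph{largest} primitive prime divisor of $q^{e_1}-1$, since none of them divides $|T_\omega|$, and the task becomes purely quantitative. If $r$ is large then $e_1$ is large, and as any primitive prime divisor satisfies $p\equiv 1\pmod{e_1}$ we already have $p\ge e_1+1\ge k$ uniformly in $q$. If instead $r$ is bounded but $q$ is large, this crude estimate is useless and one must invoke Lemma~\ref{siegeltheorem}: Siegel's theorem supplies a primitive prime divisor $p\ge c\log\log q$, which exceeds $k$ once $q$ is large enough (the constants depending only on the bounded value of $e_1$). Finally, if both $r$ and $q$ are bounded there are only finitely many such $T$, and $|\Omega|\le|T|<q^{\,d}$ with $d$ depending only on $r$ is bounded; assembling the thresholds on $r$ and $q$ forced by the two displayed inequalities, together with this finite set and the finitely many Zsigmondy exceptions ($e_1=2$ with $q=2^m-1$, and $(e_1,q)=(6,2)$), defines the required $f(k)$.

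The genuine difficulty is Case~2, where $T_\omega$ lies on the LPS list and the prime-order argument can fail outright. These are the ``large'' maximal subgroups, the extreme instances being the pairs with $\pi(T)=\pi(M)$, for which \emph{no} prime is missing at all; for the families recorded in Table~\ref{tabLE} the required semiregular subgroups, of order growing with $m$ and $q$, have already been constructed in Lemmas~\ref{tableline1}--\ref{tableline4}, and the remaining lines of~\cite[Table~10.7]{LPS} are to be treated by the same torus- and unipotent-element recipe. The on-list subgroups that are not $\pi$-equal still admit a prime of $|T|$ outside $|M|$, and one checks that it can be chosen large (typically a primitive prime divisor of $q^{e}-1$ for a second large exponent $e$ that the subfield or extension-field structure of $M$ cannot absorb). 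I expect this exhaustive passage through Tables~$10.1$--$10.5$, establishing in every listed family a semiregular subgroup whose order is unbounded as the defining parameters grow, to be the main obstacle, precisely because it cannot be dispatched by one uniform prime as in Case~1.
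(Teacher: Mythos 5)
Your proposal is correct and follows essentially the same route as the paper: reduce to finding a prime dividing $|T|$ but not $|T_\omega|$, invoke Theorem~\ref{thrm:LPS} to split into the case where a primitive prime divisor of $q^{e}-1$ is missing from $|T_\omega|$ (made quantitatively large via $p\equiv 1\pmod e$ for growing rank and via Lemma~\ref{siegeltheorem} for bounded rank and growing $q$) and the case where $T_\omega$ is one of the listed maximal subgroups, the latter being handled by the explicit semiregular constructions of Lemmas~\ref{tableline1}--\ref{tableline4} and their analogues. Your explicit remark that a primitive prime divisor of $q^{e_1}-1$ is at least $e_1+1$, which handles the unbounded-rank regime uniformly in $q$, is a point the paper leaves implicit but is exactly what makes its ``$p$ tends to infinity'' claim work.
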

\begin{proof}
The proof follows easy with a careful inspection on Tables~10.1--10.5 in~\cite{LPS}. Here we use the notation from~\cite{LPS} and we give details only for a few cases, all other cases are dealt with similarly.

\smallskip

Suppose $T=\mathrm{PSL}_n(q)$. Assume $n\ge 6$ even and let $\Pi=\{q_n,q_{n-1}\}$. From~\cite[Table~10.1]{LPS}, we deduce that, with the exception of $T=\mathrm{PSL}_6(2)$, there exists no proper subgroup $M$ of $T$ with $|M|$ divisible by each prime in $\Pi$. Since we may exclude $\mathrm{PSL}_6(2)$ from this asymptotic result, we deduce that $|T_\omega|$ is not divisible by some prime $p$ in $\Pi$. In particular, $T$ has a semiregular subgroup of order at least $p$. By using Lemma~\ref{siegeltheorem}, $p$ tends to infinity as $|T|$ tends to infinity. Assume $n\ge 5$ is odd and let $\Pi=\{q_n,q_{n-1},q_{n-2}\}$. From~\cite[Table~10.1]{LPS}, we deduce that, with the exception of $T=\mathrm{PSL}_7(2)$, there exists no proper subgroup $M$ of $T$ with $|M|$ divisible by each prime in $\Pi$. In particular, we may argue as above for dealing with this case. The argument for $\mathrm{PSL}_2(q)$, $\mathrm{PSL}_3(q)$ and $\mathrm{PSL}_4(q)$ is entirely similar, using Table~10.3 in~\cite{LPS}.

\smallskip

Suppose $T=\mathrm{PSp}_{2m}(q)'$. Assume $m\ge 3$ odd and let $\Pi=\{q_{2m-2},q_{2m-2},q_{2m}\}$. From~\cite[Table~10.1]{LPS}, we deduce that, with the exception of $T=\mathrm{PSp}_6(2)$, there exists no proper subgroup $M$ of $T$ with $|M|$ divisible by each prime in $\Pi$. Since we may exclude $\mathrm{PSp}_6(2)$ from this asymptotic result, we deduce that $|T_\omega|$ is not divisible by some prime $p$ in $\Pi$. In particular, $T$ has a semiregular subgroup of order at least $p$. By using Lemma~\ref{siegeltheorem}, $p$ tends to infinity as $|T|$ tends to infinity. Assume $m\ge 4$ is even and let $\Pi=\{q_{2m},q_{2m-2},q_{2m-4}\}$. From~\cite[Table~10.1]{LPS}, we deduce that, with the exception of $T=\mathrm{PSp}_8(2)$, the only maximal with $|M|$ divisible by each prime in $\Pi$ satisfies $M={\bf N}_T(\Omega_{2m}^-(q))$. In particular, if $T_\omega$ is not divisible by some prime in $\Pi$, then we deduce that $T$ has semiregular subgroups of large order from Lemma~\ref{siegeltheorem}. Therefore, we just need to consider the action of $T=\mathrm{PSp}_{2m}(q)$ on the right cosets of $T_\omega={\bf N}_T(\Omega_{2m}^-(q))$. Lemma~\ref{tableline1} deals exactly with this action and indeed, it shows that $T$ contain semiregular subgroups having order that tends to infinity as $|T|$ tends to infinity. The argument for $\mathrm{PSp}_4(q)$ is similar and uses Table~10.3 in~\cite{LPS} and Lemma~\ref{tableline4}.

\smallskip

Suppose $T=\mathrm{P}\Omega_{2m+1}(q))$. Assume $m\ge 3$ odd and let $\Pi=\{q_{2m-2},q_{2m-2},q_{2m}\}$. From~\cite[Table~10.1]{LPS}, we deduce that there exists no proper subgroup $M$ of $T$ with $|M|$ divisible by each prime in $\Pi$.  We deduce then that $|T_\omega|$ is not divisible by some prime $p$ in $\Pi$. In particular, $T$ has a semiregular subgroup of order at least $p$. By using Lemma~\ref{siegeltheorem}, $p$ tends to infinity as $|T|$ tends to infinity. Assume $m\ge 4$ is even and let $\Pi=\{q_{2m},q_{2m-2},q_{2m-4}\}$. From~\cite[Table~10.1]{LPS}, we deduce that the only maximal with $|M|$ divisible by each prime in $\Pi$ satisfies $M={\bf N}_T(\Omega_{2m}^-(q))$. In particular, if $T_\omega$ is not divisible by some prime in $\Pi$, then we deduce that $T$ has semiregular subgroups of large order from Lemma~\ref{siegeltheorem}. Therefore, we just need to consider the action of $T=\mathrm{P}\Omega_{2m+1}(q)$ on the right cosets of $T_\omega={\bf N}_T(\Omega_{2m}^-(q))$. Lemma~\ref{tableline2} deals exactly with this action and indeed, it shows that $T$ contain semiregular subgroups having order that tends to infinity as $|T|$ tends to infinity.
\smallskip

The argument for all other Lie type groups is similar and it is omitted.
\end{proof}

Observe that the result in our Section~\ref{sectioncase1} can be seen as an asymptotic improvement of Corollary~6 in~\cite{LPS}.

\subsection{Semiregular subgroups of order at least four}\label{lie4}

We report here~\cite[Corollary~7]{LPS}. Again, we only state it for our current needs.
\begin{lemma}\label{Lie4lemma}
Let $T$ be a simple group of Lie type and assume $$T\ne \mathrm{PSL}_2(8), \mathrm{PSL}_3(3), \mathrm{PSU}_3(3), \mathrm{PSp}_4(8), \hbox{ or }\mathrm{PSL}_2(p)$$ with $p$ a Mersenne prime. Then there is a collection $\Pi$ of prime numbers of $|T|$, such that for $M< T$, if $\Pi\subseteq \pi(M)$, then $\pi(T)=\pi(M)$ and $M$ is given in Table~$10.7$ in~\cite{LPS}. Moreover, every prime in $\Pi$ is at least $5$, except in the following cases.
\begin{tabular}{lc}\hline
$T$&$\Pi$\\\hline
$\mathrm{PSL}_2(p)$, $p$ \textrm{ prime, }$p=2^{a}3^b-1$, $b>0$&$\{3,p\}$\\
$\mathrm{PSU}_4(2)$&$\{3,5\}$\\
$\mathrm{PSU}_5(2)$&$\{3,5,11\}$\\\hline
\end{tabular}
\end{lemma}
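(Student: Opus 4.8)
The plan is to treat the first assertion as a direct citation and to supply a short independent verification only for the final sentence. The existence of a set $\Pi$ of primes of $|T|$ with the property that $\Pi\subseteq\pi(M)$ forces both $\pi(M)=\pi(T)$ and $M$ to appear in \cite[Table~10.7]{LPS} is exactly \cite[Corollary~7]{LPS}; moreover the groups excluded there — $\mathrm{PSL}_2(8)$, $\mathrm{PSL}_3(3)$, $\mathrm{PSU}_3(3)$, $\mathrm{PSp}_4(8)$ and $\mathrm{PSL}_2(p)$ with $p$ a Mersenne prime — are precisely the cases in which no clean such $\Pi$ can be produced. So I would transcribe that statement verbatim, after which only the lower bound on the primes in $\Pi$ remains to be established.

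For the \emph{moreover} clause I would unwind how $\Pi$ is assembled in \cite{LPS}. Inspecting the second and third columns of \cite[Tables~10.1--10.5]{LPS}, the set $\Pi$ is built from primitive prime divisors $q_i$ of $q^i-1$ for a short list of indices $i$ attached to each family (for example $i\in\{n,n-1,n-2\}$ in the linear case and $i\in\{2m,2m-2,2m-4\}$ in the symplectic and orthogonal cases). The elementary fact driving everything is that a primitive prime divisor $p$ of $q^i-1$ has multiplicative order exactly $i$ modulo $p$, so $i\mid p-1$ and hence $p\ge i+1$. Consequently, as soon as every index $i$ feeding into $\Pi$ satisfies $i\ge 4$, every prime of $\Pi$ is automatically at least $5$; and an index $i=2$ contributes only odd primes, so those are at least $3$.

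I would then run through the families one at a time and confirm that the contributing indices are indeed $\ge 4$, the only escapes being twofold. First, a forced index $i\in\{1,2\}$ can admit a small prime $2$ or $3$. Second, a primitive prime divisor can fail to exist, in which case by Zsigmondy's theorem~\cite{zi} one is in the situation $(i,q)=(6,2)$ or $i=2$ with $q$ Mersenne, and a replacement small prime must be taken. The Mersenne situation is absorbed into the excluded list, while the remaining escapes, matched against \cite[Table~10.7]{LPS}, pin down exactly the finitely many groups $\mathrm{PSL}_2(p)$ with $p+1=2^a3^b$ and $b>0$ (where $p+1=2^a3^b$ forces the only odd prime of $p^2-1$ to be $3$, giving $\Pi=\{3,p\}$), together with $\mathrm{PSU}_4(2)$ and $\mathrm{PSU}_5(2)$; for these last two the sets $\{3,5\}$ and $\{3,5,11\}$ are read off directly from the factorizations $|\mathrm{PSU}_4(2)|=2^6\cdot 3^4\cdot 5$ and $|\mathrm{PSU}_5(2)|=2^{10}\cdot 3^5\cdot 5\cdot 11$.

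The conceptual content is slight: the inequality $p\ge i+1$ does all of the real work. I expect the main obstacle to be purely organisational — correctly matching, family by family, the indices used to assemble $\Pi$ in \cite{LPS} against the two Zsigmondy exceptions, and checking that no small prime slips through outside the three tabulated lines. This is careful table-reading rather than genuine difficulty.
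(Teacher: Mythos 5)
The paper offers no proof of this lemma at all: it is introduced with ``We report here \cite[Corollary~7]{LPS}'' and stated purely as a citation, which is exactly the route you take for the main assertion, so the approaches coincide. Your additional sketch verifying the \emph{moreover} clause via the congruence $p\equiv 1\pmod{i}$ for a primitive prime divisor $p$ of $q^i-1$ (hence $p\ge i+1\ge 5$ once every contributing index satisfies $i\ge 4$) is sound, but it supplies detail the paper itself does not record.
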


\begin{lemma}\label{Lie4}
Let $T$ be a simple group of Lie type acting primitively and faithfully on a set $\Omega$. Then either $T$ contains a semiregular subgroup of order at least $4$, or one of the following holds
\begin{enumerate}
\item\label{excep1} $T\cong\mathrm{PSL}_2(4)\cong\mathrm{PSL}_2(5)$ and $|\Omega|=6$,
\item\label{excep2} $T\cong\mathrm{PSL}_2(9)$ and $|\Omega|=6$,
\item\label{excep3} $T\cong\mathrm{PSU}_3(3)$ and $|\Omega|=36$.
\end{enumerate}
\end{lemma}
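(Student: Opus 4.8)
The plan is to follow the template of Lemma~\ref{lemma:-11}, but to replace the asymptotic Tables~$10.1$--$10.5$ of~\cite{LPS} by the sharper result recorded in Lemma~\ref{Lie4lemma}. Write $M=T_\omega$ for the (maximal) point stabiliser and let $\Pi$ be the collection of primes attached to $T$ in Lemma~\ref{Lie4lemma}. The elementary observation used throughout is that if a prime $p$ does not divide $|T_\omega|$, then no nonidentity $p$-element of $T$ is $T$-conjugate into $T_\omega$, so a cyclic subgroup of order $p$ acts semiregularly on $\Omega$; in particular a single prime $p\ge 5$ with $p\nmid|T_\omega|$ already yields a semiregular subgroup of order at least $4$. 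The argument then splits according to the dichotomy $\Pi\not\subseteq\pi(T_\omega)$ versus $\Pi\subseteq\pi(T_\omega)$, after first isolating the groups to which Lemma~\ref{Lie4lemma} does not directly apply.

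Assume first that $T$ is neither $\mathrm{PSL}_2(q)$ nor one of the finitely many groups set aside in Lemma~\ref{Lie4lemma}, so that every prime of $\Pi$ is at least $5$. If $\Pi\not\subseteq\pi(T_\omega)$, the observation above produces a semiregular subgroup of order at least $5$ and we are done. If instead $\Pi\subseteq\pi(T_\omega)$, then $\pi(T)=\pi(T_\omega)$, so the pair $(T,T_\omega)$ occurs in Table~$10.7$ of~\cite{LPS}. Among the rows of that table involving a simple group of Lie type, the only infinite families are lines~$3$--$6$, reproduced in Table~\ref{tabLE}; Lemmas~\ref{tableline1}--\ref{tableline4} treat exactly these four actions and exhibit in each a semiregular subgroup of order at least $4$, the sole residual base cases being $(m,q)=(2,2)$ in Lemma~\ref{tableline1} and $q=2$ in Lemma~\ref{tableline4}, both of which concern $\mathrm{PSp}_4(2)'\cong\mathrm{PSL}_2(9)$. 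These base cases, together with the remaining finitely many bounded-order rows of Table~$10.7$, I would settle by computer with \texttt{magma}~\cite{magma}; this is where the degree-$6$ action of $\mathrm{PSL}_2(9)$ of part~\eqref{excep2} appears, and one checks that no further exception arises.

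It remains to treat $T=\mathrm{PSL}_2(q)$ directly, by means of Dickson's classification of its subgroups (see~\cite{DM}); put $d=\gcd(2,q-1)$. When $T_\omega$ is a Borel subgroup or a torus normaliser, a short coprimality computation shows that at least one of the unipotent radical, of order $q$, the split torus, of order $(q-1)/d$, and the nonsplit torus, of order $(q+1)/d$, has all its nonidentity element orders coprime to those occurring in $T_\omega$, and hence acts semiregularly on $\Omega$; examining these three orders shows that the chosen subgroup has order at least $4$ except in the degree-$6$ action of $\mathrm{PSL}_2(5)\cong\mathrm{PSL}_2(4)$, whose largest semiregular subgroup has order $3$ and which is part~\eqref{excep1}. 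When $T_\omega$ is a subfield subgroup or one of $\mathrm{Alt}(4)$, $\mathrm{Sym}(4)$, $\mathrm{Alt}(5)$, either the unipotent radical is semiregular (whenever the defining characteristic does not divide $|T_\omega|$) or a Zsigmondy primitive prime divisor of $q^2-1$~\cite{zi} furnishes a prime at least $5$ not dividing $|T_\omega|$, and the finitely many remaining small $q$ are checked by computer.

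Finally, the groups set aside in Lemma~\ref{Lie4lemma}, namely $\mathrm{PSL}_2(8)$, $\mathrm{PSL}_3(3)$, $\mathrm{PSU}_3(3)$ and $\mathrm{PSp}_4(8)$, together with $\mathrm{PSU}_4(2)$ and $\mathrm{PSU}_5(2)$ (for which $\Pi$ contains the prime $3$), possess only finitely many primitive actions apiece and are disposed of by a direct computation in \texttt{magma}~\cite{magma}; here the degree-$36$ action of $\mathrm{PSU}_3(3)$ of part~\eqref{excep3} emerges as genuinely exceptional. I expect the main difficulty to lie precisely in the $\mathrm{PSL}_2(q)$ analysis: because Lemma~\ref{Lie4lemma} deliberately omits the Mersenne-prime and $2^a3^b-1$ fields, one cannot simply extract a large prime, and must argue structurally with tori and unipotent subgroups and then confirm, through a delicate finite check, that precisely the three actions listed in the statement fail to contain a semiregular subgroup of order $4$, while all neighbouring actions succeed.
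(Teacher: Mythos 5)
Your proposal is correct and follows essentially the same route as the paper: invoke Lemma~\ref{Lie4lemma} to split into the case where some prime of $\Pi$ (each at least $5$) misses $\pi(T_\omega)$, which immediately yields a cyclic semiregular subgroup of prime order at least $5$, and the case $\pi(T)=\pi(T_\omega)$, where Table~10.7 of~\cite{LPS} applies, lines 3--6 are exactly Lemmas~\ref{tableline1}--\ref{tableline4}, and the set-aside groups and bounded rows are checked in \texttt{magma}. The one genuine divergence is your treatment of $\mathrm{PSL}_2(q)$: you remove \emph{all} of these groups from the general argument and run Dickson's subgroup classification over every $q$, whereas the paper isolates only $\mathrm{PSL}_2(p)$ with $p$ prime (the groups for which Lemma~\ref{Lie4lemma} either fails to apply or returns a $\Pi$ containing $3$) and disposes of them with a two-line dichotomy --- either $p\nmid|T_\omega|$ and a Sylow $p$-subgroup is semiregular, or $T_\omega$ is a Borel subgroup and the nonsplit torus of order $(p+1)/2$ is semiregular on the projective line --- leaving $\mathrm{PSL}_2(q)$ for non-prime $q$ to the general machinery. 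Your route costs more case analysis (Borel, both torus normalisers, subfield subgroups, $\mathrm{Alt}(4)$, $\mathrm{Sym}(4)$, $\mathrm{Alt}(5)$, each needing its own coprimality check and a finite computer sweep) but is self-contained for $\mathrm{PSL}_2$; the paper's version is leaner because for $p$ prime the only maximal subgroup of order divisible by $p$ is the Borel. Both correctly locate the exceptions: the degree-$6$ actions of $\mathrm{PSL}_2(5)$ and $\mathrm{PSL}_2(9)$ and the degree-$36$ action of $\mathrm{PSU}_3(3)$.
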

\begin{proof}
Let $\omega\in \Omega$ and set $M=T_\omega$.

Suppose first that $T$ is isomorphic to
$$\mathrm{PSL}_2(8), \mathrm{PSL}_3(3), \mathrm{PSU}_3(3), \mathrm{PSp}_4(8), \mathrm{PSU}_4(2), \mathrm{PSU}_5(2)$$
or to a group in~\cite[Table~10.7]{LPS}, except for the first seven rows.
In this case, the proof follows with a computer computation with the algebra system \texttt{magma}. Indeed, in each of these cases, the group $T$ is small and the result can be verified with the auxiliary help of a computer by constructing with a case-by-case analysis the primitive permutation representations under consideration and by checking the existence of semiregular subgroups of order at least $4$. Therefore, for the rest of the proof, we may suppose that $T$ is not isomorphic to any of these groups.\footnote{Observe that the exceptional case~\eqref{excep3} arises when analyzing these small groups.}

Suppose $T=\mathrm{PSL}_2(p)$, with $p$  prime. If $\gcd(|M|,p)=1$, then a Sylow $p$-subgroup of $T$ acts semiregularly on $\Omega$. Therefore, as $p\ge 4$ (because $p$ is prime), we deduce that $T$ has  a semiregular subgroup of order at least $4$. If $p$ divides $|M|$, then $M$ is a Borel subgroup of $T$ and the action of $T$ on $\Omega$ is permutation equivalent to the action of $T$ on the points of the projective line. Therefore, $T$ has a semiregular element of order $(p+1)/2$. Now, $(p+1)/2\ge 4$, except when $p=5$: this is the exception in~\eqref{excep1}. Therefore, for the rest of the proof, we may suppose that $T$ is not isomorphic to $\mathrm{PSL}_2(p)$, with $p$  prime.

We are now in the position to use Lemma~\ref{Lie4lemma}. There exists a set $\Pi$ of three prime numbers, each at least 5, with the property that either $\Pi\nsubseteq \Pi(M)$, or $\Pi\subseteq \pi(M)$ and $(T,M)$ is one of the pairs in the first seven rows of Table~10.7 in~\cite{LPS}. In the first case we are done, because $T$ has a semiregular subgroup of order at least $5\ge 4$. If $(T,M)$ are as in lines 3, 4, 5 and 6 of~\cite[Table~10.7]{LPS}, then the result follows from Lemmas~\ref{tableline1}--\ref{tableline4}. Therefore it remains to consider the lines 1, 2 and 7 of~\cite[Table~10.7]{LPS}. In lines 1 and 2, the group $T$ is alternating and hence only $\mathrm{Alt}(5),\mathrm{Alt}(6),\mathrm{Alt}(8)$ are of interest here. A computation yields that only the examples in~\eqref{excep1} and~\eqref{excep2} have no semiregular subgroup of order at least $4$. Finally, in line 7, we have $T=\mathrm{PSL}_2(p)$ with $p$ prime, which we have dealt with above.
\end{proof}

\thebibliography{30}


\bibitem{BP}J.~Bamberg, C.~E.~Praeger, Finite permutation groups with a transitive minimal normal subgroup, \textit{Proc. London Math. Soc. (3)} \textbf{89} (2004), 71--103.


\bibitem{magma} W.~Bosma, J.~Cannon, C.~Playoust,
The Magma algebra system. I. The user language,
\textit{J. Symbolic Comput.} \textbf{24} (3-4) (1997), 235--265.

\bibitem{bhr}J.~N.~Bray, D.~F.~Holt, C.~M.~Roney-Dougal,
 \textit{The maximal subgroups of the low dimensional classical groups},
 London Mathematical Society Lecture Note Series \textbf{407}, Cambridge University Press, Cambridge, 2013.




\bibitem{BSW}D.~Bubboloni, P.~Spiga, Th.~Weigel, \textit{Normal $2$-coverings of the finite simple groups and their generalizations}, Springer Lecture Notes in Mathematics, to appear.



\bibitem{6}P.~J.~Cameron, C.~Y.~Ku, Intersecting families of permutations, \textit{European J. Combin.} \textbf{24}
(2003), 881--890.



\bibitem{ATLAS}  J.~H.~Conway, R.~T.~Curtis, S.~P.~Norton, R.~A.~Parker, R.~A.~Wilson, An ATLAS of Finite Groups Clarendon Press, Oxford, 1985;
reprinted with corrections 2003.

\bibitem{DM}J.~D.~Dixon, B.~Mortimer,
\textit{Permutation {G}roups}, Graduate Texts in Mathematics
\textbf{163}, Springer-Verlag, New York, 1996.

\bibitem{erdos1961intersection}
P.~Erd\H{o}s, C.~Ko, R.~Rado, Intersection theorems for systems of finite sets, \textit{The Quarterly Journal of Mathematics} \textbf{12} (1961), 313--320.
\bibitem{eps}P.~Erd\H{o}s, P.~P\`alfy, M.~Szegedy, $a \pmod p\le b\pmod p$ for all primes $p$ implies $a=b$,
\textit{Amer. Math. Monthly} \textbf{94} (1987), 169--170.

\bibitem{Hall}D.~Hanson, On a theorem of Sylvester and Schur, \textit{Canadian Mathematical Bulletin} \textbf{16} (1973), 195--199.


\bibitem{Giudici}M.~Giudici, Quasiprimitive groups with no fixed point free elements of prime order,
\textit{Journal of the London Mathematical Society} \textbf{67} (2003), 73--84.

\bibitem{Glasby}S.~P.~Glasby, F.~L\"{u}beck, A.~Niemeyer, C.~E.~Praeger, Primitive prime divisors and the
$n$-th cyclotomic polynomial, \textit{J. Aust. Math. Soc. }\textbf{102} (2017), 122--135.

\bibitem{GodsilMeagher}C.~Godsil, K.~Meagher, \textit{Erd\H{o}s-Ko-Rado Theorems: Algebraic Approaches}, Cambridge studies in advanced mathematics \textbf{149}, Cambridge University Press,  2016.


\bibitem{Bob}R.~M.~Guralnick, Zeros of permutation characters with applications to prime splitting and Brauer Groups, \textit{J. Algebra} \textbf{131} (1990), 294--302.

\bibitem{DLP}H.~Dietrich, M.~Lee, T.~Popiel, The maximal subgroups of the Monster, \href{https://arxiv.org/abs/2304.14646}{arXiv:2304.14646}.

\bibitem{Hardy}G.~H.~Hardy, E.~M.~Wright, \textit{An introduction to the theory of numbers}, fifth edition, Oxford Science publications, Clarendon Press, Oxford, 1979.


\bibitem{huppert} B.~Huppert, Singer-Zyklen in Klassischen
Gruppen, \textit{Math. Z.} {\bf 117} (1970), 141--150.

\bibitem{Jehne}W.~Jehne, Kronecker classes of algebraic number fields, \textit{J. Number Theory} \textbf{9} (1977), 279--320.

\bibitem{Klingen1}N.~Klinghen, Zahlk\"{o}rper mit gleicher Primzerlegung, \textit{J. Reiner Angew. Math.} \textbf{299} (1978), 342--384.

\bibitem{Klingen2}N.~Klinghen, \textit{Arithmetical Similarities, Prime decompositions and finite group theory}, Oxford Mathematical Monographs, Clarendon Press, Oxford, 1998.

\bibitem{KRS}K.~Meagher, A.~S.~Razafimahatratra, P.~Spiga, On triangles in derangement graphs, \textit{J. Comb. Theory Ser. A} \textbf{180} (2021),  Paper No. 105390.



\bibitem{10}B.~Larose, C.~Malvenuto, Stable sets of maximal size in Kneser-type graphs, \textit{European J. Combin. }\textbf{25} (2004), 657--673.

\bibitem{li2020ekr}
C.~H.~Li, S.~J.~Song, V.~Raghu~Tej Pantangi, Erd\H{o}s-{K}o-{R}ado problems for permutation groups,
\textit{arXiv preprint arXiv:2006.10339}, 2020.

\bibitem{LPS}M.~W.~Liebeck,  C.~E.~Praeger, J.~Saxl,
Transitive Subgroups of Primitive Permutation Groups, \textit{J. Algebra} \textbf{234} (2000), 291--361.

\bibitem{LPSLPS}M.~W.~Liebeck, C.~E.~Praeger, J.~Saxl, On  the
O'Nan-Scott theorem for finite primitive permutation groups,
\textit{J. Australian Math. Soc. (A)} \textbf{44} (1988), 389--396.




\bibitem{MS}J.~Morris, P.~Spiga, Asymptotic enumeration of Cayley digraphs, \textit{Israel J. Math.} \textit{242} (2021), 401--459.

\bibitem{pi}P.~M.~Neumann, The concept of primitivity in group theory and the second memoir of Galois,
\textit{Arch. Hist. Exact Sci.} \textbf{60} (2006), 379--429.


\bibitem{19}C.~E.~Praeger, Finite quasiprimitive graphs, in: \textit{Surveys in Combinatorics}, London
Math. Soc. Lecture Note Ser. 241, Cambridge University, Cambridge (1997), 65--85.
\bibitem{Praeger}C.~E.~Praeger, Covering subgroups of groups and Kronecker classes of fields, \textit{J. Algebra} \textbf{118},(1988) 455--463.
\bibitem{Praeger1}C.~E.~Praeger, Kronecker classes of field extensions of small degree,
\textit{J. Austral. Math. Soc. Ser. A} \textbf{50} (1991), 297--315.
\bibitem{Praeger2}C.~E.~Praeger, Kronecker classes of fields and covering subgroups of finite groups,
\textit{J. Austral. Math. Soc. Ser. A} \textbf{57} (1994), 17--34.


\bibitem{Serre}J.~P.~Serre, On a theorem of Jordan,  \textit{Bull. Amer. Math. Soc.} \textbf{40} (2003), 429--440.

\bibitem{Siegel}C.~L.~Siegel, Approximation algebraischer Zahlen, \textit{Math. Z.} \textbf{10} (1921) 173--213.

\bibitem{spiga}P.~Spiga, The Erd\H{o}s-Ko-Rado theorem for the derangement graph of the projective general linear group acting on the projective space, \textit{J. Combin. Theory Ser. A} \textbf{166} (2019), 59--90.

\bibitem{abc}C.~L.~Stewart, R.~Tijdeman, On the Oesterl\'{e}-Masser conjecture, \textit{Monatsh, Math.} \textbf{102} (1986), 251--257.



\bibitem{80}R.~A.~Wilson, Maximal subgroups of sporadic groups. \textit{Finite simple groups: thirty years of
the atlas and beyond}, 57--72, Contemp. Math., 694, Amer. Math. Soc., Providence, RI, 2017.
\bibitem{81}R.~A.~Wilson, The uniqueness of $\mathrm{PSU}_3 (8)$ in the Monster, \textit{Bull. Lond. Math. Soc.} \textbf{49} (2017),
877--880.

\bibitem{zi}K.~Zsigmondy, Zur Theorie der Potenzreste,
\textit{Monatsh. Math. Phys.} \textbf{3} (1892), 265--284.

\end{document}